\newtheorem{theorem}{Theorem}[section]
\newtheorem{proposition}[theorem]{Proposition}
\newtheorem{lemma}[theorem]{Lemma}
\theoremstyle{definition}
\newtheorem{definition}[theorem]{Definition}
\theoremstyle{remark}
\newtheorem{remark}[theorem]{Remark}
\DeclareMathOperator{\sech}{sech}
\DeclareMathOperator{\cn}{cn}
\DeclareMathOperator{\dn}{dn}
\DeclareMathOperator{\sn}{sn}
\DeclareMathOperator{\am}{am}
\newcommand{\norm}[2]{\ensuremath{\Vert #1 \Vert_{#2}}}
\begin{document}

\title[Embeddedness of elastic flows]{Optimal thresholds for preserving embeddedness of elastic flows}
\author[T.~Miura]{Tatsuya Miura}
\address[T.~Miura]{Department of Mathematics, Tokyo Institute of Technology, Meguro, Tokyo 152-8511, Japan}
\email{miura@math.titech.ac.jp}
\author[M.~M\"{u}ller]{Marius M\"{u}ller}
\address[M.~M\"{u}ller]{
Mathematisches Institut, Albert-Ludwigs-Universität Freiburg, Ernst-Zermelo-Straße 1, 79104 Freiburg im Breisgau, Germany;
present address: Mathematisches Institut, Universität Leipzig, Augustusplatz 10, 04109 Leipzig, Germany}
\email{marius.mueller@math.uni-leipzig.de}
\author[F.~Rupp]{Fabian Rupp}
\address[F.~Rupp]{Institute of Applied Analysis, Ulm University, Helmholtzstra\ss e 18, 89081 Ulm, Germany; present address: Faculty of Mathematics, University of Vienna, Oskar-Morgenstern-Platz 1, 1090 Vienna, Austria}

\email{fabian.rupp@univie.ac.at}
\keywords{Embeddedness preservation, elastic flow, elastica, self-intersection, geometric inequality, rotation number}
\subjclass[2020]{53E40 (primary), 49Q10, 53A04 (secondary)}

\begin{abstract}
  We consider elastic flows of closed curves in Euclidean space.
  We obtain optimal energy thresholds below which elastic flows preserve embeddedness of initial curves for all time.
  The obtained thresholds take different values between codimension one and higher.
  The main novelty lies in the case of codimension one, where we obtain the variational characterization that the thresholding shape is a minimizer of the bending energy (normalized by length) among all nonembedded planar closed curves of unit rotation number.
  It turns out that a minimizer is uniquely given by a nonclassical shape, which we call ``elastic two-teardrop''.
\end{abstract}

\maketitle

\section{Introduction}\label{sect:introduction}

In this paper we consider the embeddedness-preserving property of elastic flows of closed curves in Euclidean space in any codimension.

A one-parameter family of immersed closed curves $\gamma:\mathbb{T}^1\times[0,\infty)\to\mathbb{R}^n$, where $\mathbb{T}^1:=\mathbb{R}/\mathbb{Z}$, is called {\em elastic flow} (or {\em length-penalized elastic flow}) if for a given constant $\lambda>0$ the family $\gamma$ satisfies the following equation:
\begin{equation}\label{eq:elasticflow}
  \partial_t\gamma = -2\nabla_s^2\kappa-|\kappa|^2\kappa+\lambda\kappa,
\end{equation}
where $\kappa$ denotes the curvature vector $\kappa:=\partial_s^2\gamma$ and $\nabla_s$ denotes the normal derivative with respect to the arclength parameter $s$, that is $\nabla_s\psi=\partial_s\psi-(\partial_s\psi,T)T$, where $T:=\partial_s\gamma$ denotes the unit tangent.
In this paper we call this flow {\em $\lambda$-elastic flow} in order to make the value of $\lambda$ explicit.
The $\lambda$-elastic flow may be regarded as the $L^2$-gradient flow of the modified (or length-penalized) bending energy $E_\lambda$, which can be defined in terms of the bending energy $B[\gamma]:=\int_\gamma|\kappa|^2 \mathrm{d}s$ and the length $L[\gamma]:=\int_\gamma \mathrm{d}s$ by
\begin{equation}\label{eq:energy_E_lambda}
  E_\lambda[\gamma]:=B[\gamma]+\lambda L[\gamma] = \int_\gamma(|\kappa|^2+\lambda) \; \mathrm{d}s
\end{equation}
for a given $\lambda>0$.
In particular, the energy $E_\lambda$ generically decreases along the flow.

Similarly, a family $\gamma$ is called {\em fixed-length elastic flow} if it solves \eqref{eq:elasticflow}, where $\lambda$ depends on the solution and is given in the form of
\begin{equation}\label{eq:def lambda}
  \lambda(t)= \lambda[\gamma(\cdot,t)]=\frac{\int_{\gamma(\cdot,t)}\langle 2\nabla_s^2\kappa+|\kappa|^2\kappa,\kappa \rangle \; \mathrm{d}s}{\int_{\gamma(\cdot,t)}|\kappa|^2 \; \mathrm{d}s}.
\end{equation}
The fixed-length elastic flow may be regarded as the $L^2$-gradient flow of the bending energy $B$ under the fixed-length constraint $L[\gamma]=L_0$ for a given $L_0>0$.
For later use we also define the (scale-invariant) normalized bending energy $\bar{B}$ by \begin{equation}
    \bar{B}[\gamma]:=L[\gamma]B[\gamma].
\end{equation}
The energy $\bar{B}$ decreases along the fixed-length elastic flow.

Long time existence of elastic flows from smooth initial data as well as smooth convergence to stationary solutions (which are elasticae due to 
Definition \ref{def:Eelastica}) are known to hold in general, see e.g.\ \cite{DKS,DLPSTE,DPS16,MantegazzaPozzetta,LiYau1,Length Preserving} and also the survey \cite{MPP}.
However, since elastic flows are of higher order, the global behavior of solutions is less understood.
For example, due to the lack of maximum principle, generic higher order flows do not possess many kinds of \emph{positivity preserving} type properties, such as embeddedness or convexity, cf.\ \cite{Blatt}.

Our focus will be on embeddedness along elastic flows.
In previous studies the authors found the following optimal energy threshold for all-time embeddedness in \cite{LiYau1} ($n=2$) and \cite{LiYau2} ($n\geq2$):
Let $C_8=\bar{B}[\gamma_8]>0$ denote the energy $\bar{B}$ of a figure-eight elastica $\gamma_8$, 
see Definition \ref{def:def_fig_eight} and Figure \ref{fig:8}.
If an immersed closed curve $\gamma_0$ has the property that $\bar{B}[\gamma_0]<C_8$ (resp.\ $\frac{1}{4\lambda}E_\lambda[\gamma_0]^2<C_8$), then the fixed-length elastic flow (resp.\ $\lambda$-elastic flow) starting from $\gamma_0$ is embedded for all time $t\geq0$.
This threshold is optimal since a figure-eight elastica is a nonembedded stationary solution of the flow.
However, these results do not capture {\em embeddedness breaking along the flow} since the figure-eight elastica is {\em initially not embedded}.

Here we consider a slightly different problem, which is more natural in view of embeddedness ``preserving'': {\em Suppose that an initial closed curve is embedded.
Then, what is the optimal (maximal) energy threshold below which the elastic flow must remain embedded for all time?}
Our main result reveals that this subtle difference yields a substantial improvement of the threshold value in the planar case $n=2$, while in higher codimensions $n\geq3$ the same threshold $C_8$ is still optimal.
We now introduce a new constant $C_{2T}=\bar{B}[\gamma_{2T}]$ ($>C_8$) given by the energy $\bar{B}$ of an {\em elastic two-teardrop} $\gamma_{2T}$, which is a nonclassical shape and one of our new findings, 
see Definition \ref{def:two_teardrop} and Figure \ref{fig:2T}.
We can represent both $C_{2T}$ and $C_8$ by elliptic integrals rather explicitly, 
see \eqref{eq:defC2T} and \eqref{eq:defC8}, respectively.
Then we define our new threshold by
\begin{align}
  C^*(n):=
  \begin{cases}
    C_{2T} & (n=2),\\
    C_8 & (n\geq3).
  \end{cases}
\end{align}
The numerical values are $C_{2T}\simeq 146.628$ and $C_8\simeq 112.439$.

\begin{figure}[ht]
    \centering
    \begin{subfigure}[b]{0.4\textwidth}
         \centering
         \includegraphics[width=25mm]{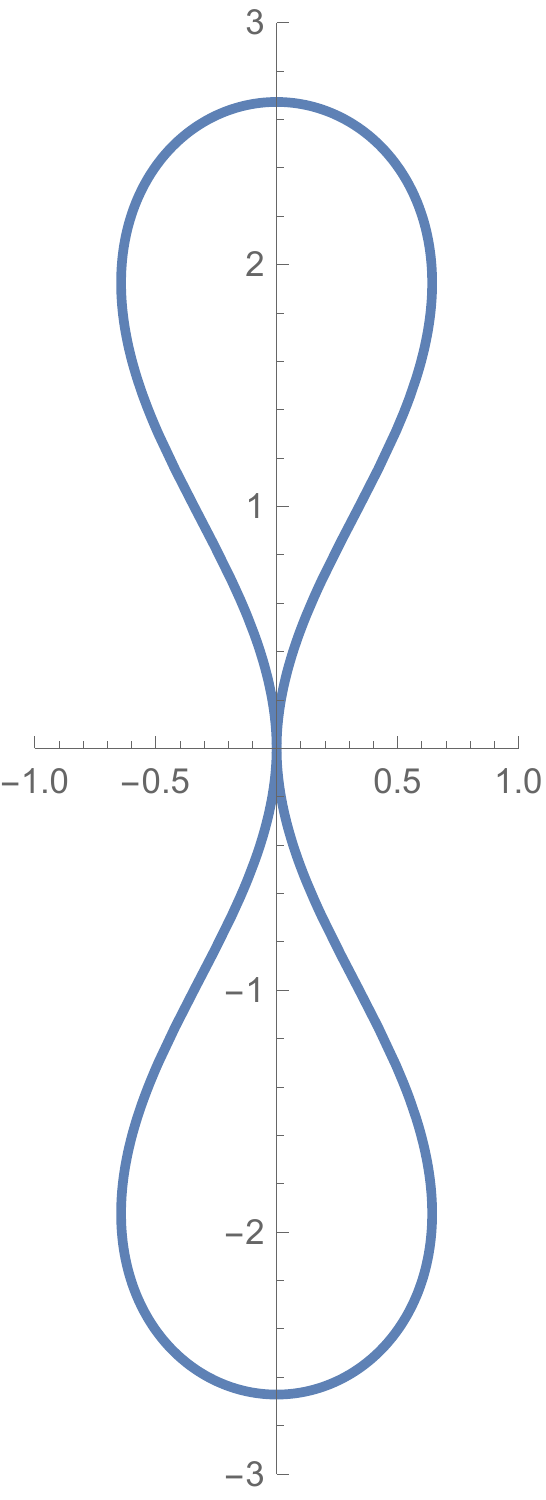}
         \caption{Elastic two-teardrop.}
         \label{fig:2T}
    \end{subfigure}
    \begin{subfigure}[b]{0.4\textwidth}
         \centering 
         \includegraphics[width=25mm]{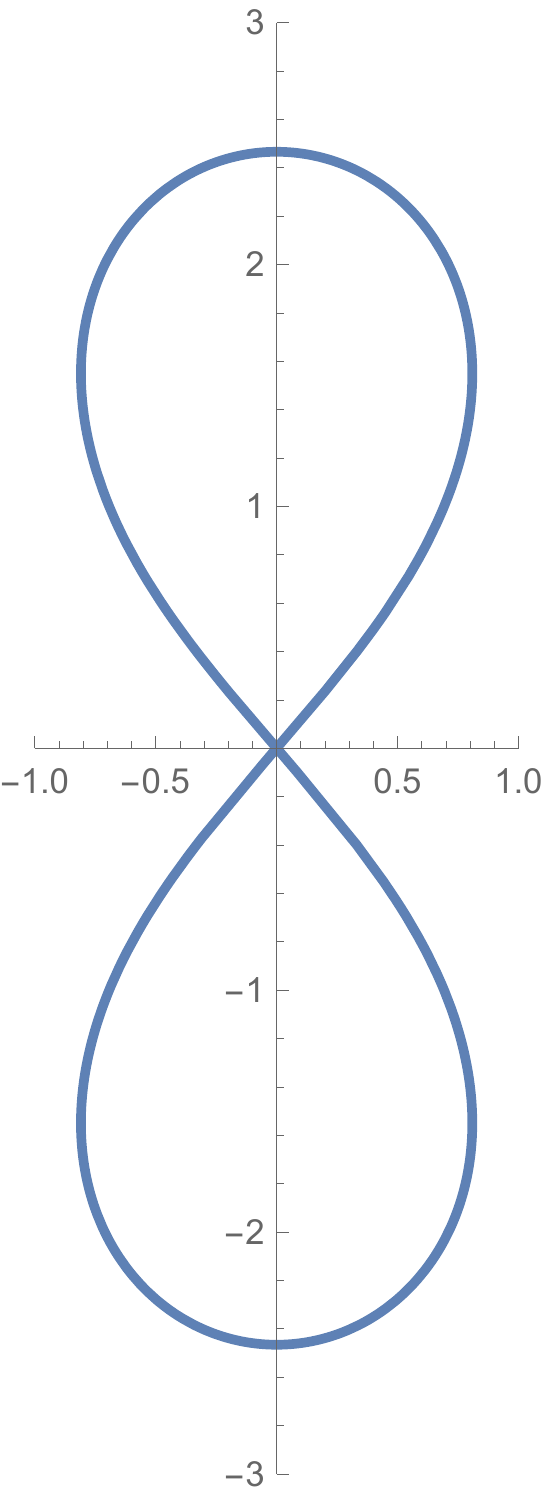}
         \caption{Figure-eight elastica.}
         \label{fig:8}
    \end{subfigure}
    \caption{Optimal configurations among nonembedded closed curves.}
    \label{fig:2Tand8}
\end{figure}

Our main result then reads as follows.

\begin{theorem}\label{thm:main}
  If a closed smooth curve $\gamma_0:\mathbb{T}^1\to\mathbb{R}^n$ is embedded, and if
  \begin{align*}
    \bar{B}[\gamma_0] &\leq C^*(n) \\
    \big(\text{resp.}\quad \tfrac{1}{4\lambda}E_\lambda[\gamma_0]^2 &\leq C^*(n) \ \text{for some}\ \lambda>0 \ \big),
  \end{align*}
  then the fixed-length elastic flow (resp.\ $\lambda$-elastic flow) with initial datum $\gamma_0$ remains embedded for all time $t\geq0$.

  In addition, for any $\varepsilon>0$ (resp.\ $\varepsilon,\lambda>0$) there exists an embedded closed smooth curve $\gamma_0:\mathbb{T}^1\to\mathbb{R}^n$ such that
  \begin{align*}
    \bar{B}[\gamma_0] &\in \big( C^*(n), C^*(n)+\varepsilon \big]\\
    \Big( \text{resp.}\quad\tfrac{1}{4\lambda}E_\lambda[\gamma_0]^2 &\in \big( C^*(n), C^*(n)+\varepsilon \big] \Big)
  \end{align*}
  and such that the fixed-length elastic flow (resp.\ $\lambda$-elastic flow) with initial datum $\gamma_0$ loses its embeddedness at some time $t_0>0$.
\end{theorem}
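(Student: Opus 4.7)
The proof splits into two independent parts: preservation (embeddedness persists below the threshold) and sharpness (constructing embedded initial data just above the threshold that loses embeddedness).

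For preservation, I would argue by contradiction. By the long-time existence results recalled in the introduction, the flow is smooth for all $t\geq0$. Suppose there is a first time $t_0>0$ at which $\gamma(\cdot,t_0)$ fails to be embedded. Since embeddedness is an open property in $C^\infty$, $\gamma(\cdot,t_0)$ is a smooth immersion whose self-intersections are all tangential. For $n=2$ the rotation number is preserved by the smooth flow and equals $\pm1$ on an embedded initial datum, so $\gamma(\cdot,t_0)$ is a nonembedded planar curve of unit rotation number; for $n\geq3$ it is a nonembedded immersed curve in $\mathbb R^n$. The central analytic input would be the variational characterization of $C^*(n)$: that $C_{2T}$ is the unique minimum of $\bar B$ over nonembedded smooth planar immersed closed curves of unit rotation number, attained by the two-teardrop; and that $C_8$ is the minimum of $\bar B$ over nonembedded smooth immersed closed curves in $\mathbb R^n$, attained by the figure-eight (a Li--Yau-type inequality already available in the literature). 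Granting this, the chain
\[
C^*(n)\;\leq\; \bar B[\gamma(\cdot,t_0)] \;\leq\; \bar B[\gamma_0] \;\leq\; C^*(n)
\]
forces $\bar B$ to be constant on $[0,t_0]$. Since the fixed-length flow is the $L^2$-gradient flow of $\bar B$ (up to the constant $L$) and hence strictly decreases $\bar B$ off stationary points, this gives $\gamma(\cdot,t)=\gamma_0$ for all $t\in[0,t_0]$, contradicting that $\gamma_0$ is embedded while $\gamma(\cdot,t_0)$ is not. The $\lambda$-elastic flow case reduces to this via the AM--GM inequality $\bar B=LB\leq\tfrac{1}{4\lambda}(B+\lambda L)^2=\tfrac{1}{4\lambda}E_\lambda^2$ together with the monotonicity of $E_\lambda$ along its own gradient flow; equality throughout forces stationarity, and hence a contradiction.

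For sharpness, given $\varepsilon>0$ I construct an embedded $\gamma_0$ with $\bar B[\gamma_0]\in(C^*(n),C^*(n)+\varepsilon]$ whose flow develops a self-intersection. For $n=2$, I perturb the two-teardrop by separating its two tangent sheets slightly in the ``wrong'' normal direction to obtain an embedded $\gamma_0$ with energy just above $C_{2T}$. Since the two-teardrop is not an elastica it is not a stationary point, and a direct computation of the flow velocity on the two tangent sheets should show that the flow drives them towards each other; continuous dependence on initial data then propagates this qualitative behavior to the embedded perturbation, which must cross itself in finite time. For $n\geq3$, the figure-eight is a stationary (planar) elastica, so I instead lift it slightly out of the plane to obtain an embedded $\mathbb R^3$-perturbation with energy close to $C_8$, and use the (known) saddle-type instability of the figure-eight, together with a careful choice of perturbation direction, to ensure the perturbed flow exits the embedded region in finite time.

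The principal obstacle I anticipate is the variational characterization of $C_{2T}$, namely showing that among all nonembedded closed planar curves of unit rotation number the minimum of $\bar B$ is attained \emph{uniquely} by the two-teardrop. This will require a compactness/concentration-compactness argument to extract a (length-normalized) minimizer from a minimizing sequence, a Lagrange-multiplier analysis forcing the two arcs meeting at the self-contact to be elastica arcs satisfying matched tangent and curvature conditions, and a discrete classification together with explicit elliptic-integral computations to rule out all competing configurations—most notably the figure-eight (wrong rotation number) and shapes with multiple self-intersections.
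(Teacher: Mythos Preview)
Your preservation argument is essentially the paper's: energy monotonicity plus the variational characterizations of $C_8$ and $C_{2T}$ (Theorems~\ref{thm:figure-eight} and~\ref{thm:1.4}) force strict inequality $\bar B[\gamma(\cdot,t)]<C^*(n)$ for $t>0$ unless the initial datum is stationary, and the AM--GM step for $E_\lambda$ is exactly what the paper does.

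The sharpness part, however, has a genuine gap. For $n=2$ you write that ``a direct computation of the flow velocity on the two tangent sheets should show that the flow drives them towards each other''. In fact the opposite happens: the paper observes (just before Lemma~\ref{lem:existence counterexample n=2}) that the elastic flow starting at $\gamma_{2T}$ \emph{immediately becomes embedded}, i.e.\ the self-intersection is pulled apart, not together. This follows from Theorem~\ref{thm:1.4} and energy decay: any non-stationary curve in $\mathcal A_0$ with $\bar B=C_{2T}$ must drop strictly below $C_{2T}$ and hence leave $\mathcal A_0$. So a naive perturbation of $\gamma_{2T}$ will not work. The paper's key device is to first \emph{replace} the local profile near the tangential contact by the graphs of $\pm\rho^2(x^4+\alpha)$; the point of $x^4$ is that its first three derivatives vanish at $0$, so at the contact the velocity reduces to $-2(\partial_x^4\eta_0)^\perp=-48\rho^2 e_2$, which now genuinely pushes the sheets together. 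Continuous dependence then propagates this to the embedded $\alpha>0$ perturbations.

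For $n\geq3$ you appeal to ``saddle-type instability of the figure-eight''. This is not what the paper does, and by itself it is insufficient: instability only tells you the flow leaves a neighbourhood of $\gamma_8$, not that it crosses itself. The paper instead uses the same $x^4$ out-of-plane replacement to force the $e_3$-components of the velocity at the two branches to have opposite signs, and then tracks the (transversal) self-intersection of the planar projection via continuous dependence to find a time at which the $e_3$-components coincide.
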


\begin{remark}
  In the planar case $n=2$ the limit profile of each elastic flow must be a circle whenever an initial curve is embedded, since the rotation number is preserved along the flow, while the only elastica with unit rotation number is a circle.
  For higher codimensions $n\geq3$ this is not the case since there are other embedded elasticae.
  However, below the threshold $C_8$  both flows still converge to circles.
  This follows by a more quantitative argument, namely by energy quantization of closed elasticae, cf.\ \cite[Section 4]{LiYau2}.
\end{remark}

In the following, we briefly sketch our proof strategy in the case of the length-preserving flow.
Since the normalized bending energy decreases, the main issue for the first part (embeddedness preserving) is to find an appropriate sub-level set of $\bar{B}$ in which all admissible closed curves must be embedded.
On the other hand, in order to prove the optimality part (embeddedness breaking), the above sub-level set must be `widest' and `approachable by embedded curves'.
This observation naturally leads us to study a minimization problem for $\bar{B}$ among all closed curves that are not embedded but approachable by embedded ones.
Once this minimization problem is solved, then we may take the minimum value as the desired threshold.
We then perform a delicate perturbation of the optimal configuration to construct an embedded initial curve which yields loss of embeddedness.
The proof of embeddedness breaking is strongly inspired by \cite{Blatt}, but we need an additional topological argument in higher codimensions.

We now discuss more on how to detect the optimal thresholds.
Since we are interested in minimization problems for $\bar{B}$, from now on we specify the natural $H^2$-Sobolev regularity for curves.
We first recall the following general estimate for nonembedded closed curves, which is recently obtained by the last two authors for $n=2$ \cite{LiYau1} and by the first author for $n\geq2$ \cite{LiYau2}.

\begin{theorem}[\cite{LiYau1,LiYau2}]\label{thm:figure-eight}
 Let $n\geq2$ and $\gamma:\mathbb{T}^1\to\mathbb{R}^n$ be an immersed closed $H^2$-curve.
 If $\gamma$ has a self-intersection, then
  \begin{equation}
    \bar{B}[\gamma] \geq C_8,
  \end{equation}
  where equality is attained if and only if $\gamma$ is a figure-eight elastica 
  (in the sense of Definition \ref{def:def_fig_eight}).
\end{theorem}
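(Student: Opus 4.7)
The plan is to exploit the self-intersection by cutting $\gamma$ into two closed loops joined at a corner, prove a sharp angle-dependent Li--Yau-type estimate for each piece, and combine the two via Cauchy--Schwarz. By scale invariance of $\bar B$ I may normalize $L[\gamma]=1$. A self-intersection $\gamma(s_1)=\gamma(s_2)$ with tangents $T_1,T_2$ decomposes $\gamma$ into two closed $H^2$ loops $\eta_1,\eta_2$ of lengths $L_1,L_2$ with $L_1+L_2=1$, each carrying a single corner where its tangent jumps between $T_1$ and $T_2$. The resulting exterior angle $\alpha\in[0,\pi]$ is the same for both loops.

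The key step is a sharp \emph{elastic teardrop estimate}: for every closed $H^2$ loop $\eta$ in $\mathbb{R}^n$ with one corner of exterior angle $\alpha$,
\[
  L[\eta]\,B[\eta]\,\geq\, K(\alpha),
\]
with equality if and only if $\eta$ is a planar elastic teardrop of opening angle $\alpha$ (a closed planar $\mu$-elastica with one corner of that angle, for a suitable Lagrange multiplier $\mu>0$). Existence of a minimizer follows by the direct method in $H^2$ after scaling to unit length, with the Fenchel-type lower bound $\int|\kappa|\,\mathrm{d}s\geq\alpha$ preventing curvature loss along minimizing sequences. The Euler--Lagrange system identifies the minimizer as a piecewise-smooth $\mu$-elastica with one corner; in codimension $n\geq 3$ the ODE for the curvature vector forces its image into a fixed two-plane, so the minimizer is automatically planar and falls into the classical family parametrized by elliptic functions.

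Combining the two estimates by Cauchy--Schwarz (equivalently Minkowski) yields
\[
  \bar B[\gamma]=(L_1+L_2)(B_1+B_2)\geq \bigl(\sqrt{L_1 B_1}+\sqrt{L_2 B_2}\bigr)^2\geq 4K(\alpha),
\]
so $\bar B[\gamma]\geq 4\min_{\alpha\in[0,\pi]}K(\alpha)$. An explicit minimization of $K$ using elliptic integrals of the first and second kind identifies $4\min_\alpha K(\alpha)$ with $C_8$, attained at exactly the crossing angle of the figure-eight elastica. Equality in Cauchy--Schwarz forces $L_1=L_2$ and $B_1=B_2$, while equality in the teardrop estimate forces each $\eta_i$ to be the optimal planar elastic teardrop; matching tangent data at the crossing then glues the two teardrops smoothly into a figure-eight elastica, giving the sharp characterization.

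The main obstacle is the teardrop estimate itself, and specifically the derivation of the sharp constant $K(\alpha)$ together with its minimizer. Three distinct subtleties must be addressed: first, the existence of a minimizer under the joint constraint of prescribed opening angle and unit length, handled by the direct method and the Fenchel bound; second, the reduction to the planar case in codimension $n\geq 3$, which is essential because otherwise the combined estimate only yields $4K_n(\alpha)$ and one must argue $K_n(\alpha)=K_2(\alpha)$ via the vector Euler--Lagrange equation; third, the elliptic-integral calculation that pins down $4\min_\alpha K(\alpha)=C_8$ and identifies the optimal angle with the one realized by the figure-eight, ensuring that the two optimal teardrops glue without introducing an additional corner and so that the equality case is genuinely attained.
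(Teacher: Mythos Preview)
This theorem is not proved in the present paper; it is quoted from \cite{LiYau1,LiYau2}, and your outline reproduces the strategy of those references faithfully: decompose at a self-intersection into two closed ``drops'' sharing a corner angle $\alpha$, establish a sharp angle-dependent lower bound $L[\eta]B[\eta]\geq K(\alpha)$ for each drop via the direct method and the Euler--Lagrange equation (reducing to the planar case in higher codimension), combine the two pieces by Cauchy--Schwarz, and identify the resulting threshold with $C_8$ through an elliptic-integral computation.

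Two small corrections are worth flagging. First, the Fenchel-type bound for a closed loop with a single corner of exterior angle $\alpha$ reads $\int_\eta|\kappa|\,\mathrm{d}s\geq 2\pi-\alpha$, not $\geq\alpha$; this does not affect the coercivity argument. Second, equality in your Cauchy--Schwarz step $(L_1+L_2)(B_1+B_2)\geq(\sqrt{L_1B_1}+\sqrt{L_2B_2})^2$ forces only $B_1/L_1=B_2/L_2$, not $L_1=L_2$ directly; the latter then follows once both pieces are identified as optimal teardrops of the same opening angle and hence congruent up to scale. Neither slip affects the overall architecture, which is correct and matches the cited proofs.
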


This statement is luckily informative enough for our purpose whenever $n\geq3$, even though its formulation does not take any approachability into account.
This is because a figure-eight elastica is approachable by embedded curves if $n\geq3$ via an out-of-plane perturbation.
However, for $n=2$ a figure-eight elastica is not even regularly homotopic to embedded curves; thus we need to impose an additional constraint on the minimizing problem.
It turns out that a sufficient constraint is to fix the \emph{rotation number} to be $1$ (as with embedded curves); such a class contains all approachable curves by a continuity argument.
For a planar curve $\gamma$, we define the (absolute) rotation number by $N[\gamma]:=| \frac{1}{2\pi}\int_\gamma k \; \mathrm{d}s|$, where $k$ denotes the signed curvature; the choice of the sign does not affect the value of $N$.
The key ingredient in the planar case is

\begin{theorem}\label{thm:1.4}
  Let $\gamma:\mathbb{T}^1\to\mathbb{R}^2$ be an immersed closed $H^2$-curve.
  If $\gamma$ has a self-intersection and $N[\gamma]=1$, then
  \begin{equation}
    \bar{B}[\gamma] \geq C_{2T},
  \end{equation}
  where equality is attained if and only if $\gamma$ is an elastic two-teardrop  
  (in the sense of Definition \ref{def:two_teardrop}).
  Moreover, there exists no other solution to the corresponding variational inequality \eqref{eq:pertfunc}.
\end{theorem}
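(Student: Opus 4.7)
The plan is to solve a constrained minimization problem for $\bar{B}$ via the direct method, identify the minimizer as the elastic two-teardrop, and then upgrade this to the stronger uniqueness statement at the level of the variational inequality \eqref{eq:pertfunc}.

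\textbf{Existence of a minimizer.} Since $\bar{B}$ is scale-invariant, I would fix $L[\gamma]=1$ and work in the class
\[
\mathcal{A}:=\{\gamma\in H^2(\mathbb{T}^1;\mathbb{R}^2)\colon |\gamma'|\equiv 1,\ N[\gamma]=1,\ \gamma(s_1)=\gamma(s_2)\text{ for some }s_1\neq s_2\}.
\]
Given a minimizing sequence $\gamma_k\in\mathcal{A}$, I would translate so $\gamma_k(0)=0$ and choose self-coincidence parameters $s_1^k,s_2^k$ with $|s_1^k-s_2^k|$ maximal; modulo a subsequence, $|s_1^k-s_2^k|\geq\delta>0$. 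The energy bound together with $|\gamma_k'|\equiv 1$ produces a uniform $H^2$-bound, yielding a subsequence with $\gamma_k\rightharpoonup\gamma_\infty$ weakly in $H^2$ and strongly in $C^1$. Then $L[\gamma_\infty]=1$ and $N[\gamma_\infty]=1$ by $C^1$-continuity of the rotation number, while the self-coincidence persists in the uniform limit (which is why $\delta$ was needed). Weak $H^2$ lower semicontinuity of $B$ then delivers a minimizer in $\mathcal{A}$.

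\textbf{Structure and identification.} The crucial observation is that the minimizer cannot have a transverse self-intersection: by the implicit function theorem, transverse self-intersection is stable under small $H^2$-perturbations, so a small $L^2$-descent direction for $B$ with $L$ fixed would remain in $\mathcal{A}$ and strictly decrease the energy, contradicting minimality. Consequently $\gamma_\infty$ must exhibit a \emph{tangential} self-intersection at $p=\gamma_\infty(s_1)=\gamma_\infty(s_2)$ with $T(s_1)=\pm T(s_2)$. Splitting at the tangency into two arcs $\alpha,\beta$, interior variations of each arc preserve membership in $\mathcal{A}$, so both arcs will satisfy the planar elastica equation $2k''+k^3-\lambda k=0$ with a common Lagrange multiplier $\lambda$, and one-sided variations of the tangency point will yield natural matching conditions for $k$ and $k'$ at $p$. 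Using the explicit elliptic-function classification of planar elasticae together with the closing condition and the rotation constraint (total signed rotation $2\pi$), the admissible configurations should reduce to essentially one---two teardrop-shaped elastica arcs, each contributing signed rotation $\pi$, glued tangentially at their tips with opposite tangent directions---which by definition is $\gamma_{2T}$. Evaluating $\bar{B}$ on this explicit profile then yields $C_{2T}$ in closed form via elliptic integrals.

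\textbf{Main obstacle: uniqueness for the variational inequality.} The stronger claim---that $\gamma_{2T}$ is the \emph{only} solution to the variational inequality \eqref{eq:pertfunc}, not merely the unique minimizer---is the principal difficulty. My plan would be to enumerate all admissible tangential gluings of two planar elastica arcs sharing a Lagrange multiplier, subject to tangency at the junction and total rotation $2\pi$. Candidates come from the wavelike, orbitlike, and borderline branches of the elastica family; gluings with the wrong orientation or multi-loop compositions are ruled out by the rotation number constraint, while the tangency condition together with the first-order matching of $k$ and $k'$ cuts the moduli down to a finite set of solutions. Finally, direct comparison of the bending energy across this finite list, exploiting monotonicity properties of the complete elliptic integrals in the modulus, should isolate the elastic two-teardrop as the unique solution.
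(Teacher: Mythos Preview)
Your overall architecture is right, but there are two genuine gaps and one factual error.

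First, your argument that the minimizer cannot have a transverse self-intersection is circular. You say stability of transverse intersections ``would allow an $L^2$-descent direction to remain in $\mathcal{A}$ and strictly decrease the energy, contradicting minimality.'' But why is there a descent direction at a minimizer? The correct logic is: if the intersection is transverse (or there are several intersection points, or multiplicity $\geq 3$), then \emph{every} compactly supported perturbation stays in $\mathcal{A}_0$, so the curve satisfies the full Euler--Lagrange equation and is therefore a closed planar elastica. One then needs the separate fact---which you never state---that no closed planar elastica with a self-intersection has rotation number $1$ (they are multi-fold circles with $N\geq 2$ or multi-fold figure-eights with $N=0$). This ``no elastica in $\mathcal{A}_0$'' lemma is what actually forces a single tangential self-intersection of multiplicity two, and the same argument rules out the case $T(s_1)=+T(s_2)$, which you also skip. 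The paper runs this whole step at the level of the variational inequality, not just for minimizers, which is why the uniqueness claim for \eqref{eq:pertfunc} follows.

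Second, your claimed matching condition is wrong: one-sided variations at the tangency yield only $W^{3,\infty}$-regularity, hence continuity of $k$, \emph{not} matching of $k'$. Indeed the two-teardrop is $C^{2,1}$ but not $C^3$. More seriously, your plan for uniqueness at the variational-inequality level---``direct comparison of the bending energy across this finite list''---cannot work: energy comparison distinguishes minimizers, not solutions of a variational inequality. The paper proceeds instead by a full classification of \emph{embedded cuspidal elasticae}: besides the teardrop (wavelike), there is exactly one orbitlike candidate, a \emph{heart-shaped} elastica, which you do not mention. Combinations of two hearts are excluded by $N\neq 1$; the teardrop--heart combination is excluded not by energy but by showing (using the curvature-matching to pin down the scaling ratio, then an explicit intersection argument) that it necessarily has a \emph{second} self-intersection, contradicting the structure result above. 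Only the symmetric two-teardrop survives.
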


The optimal ``two-teardrop'' is now approachable by embedded curves, as desired. Note that the teardrop shape is reminiscent of the profile curve of the Willmore surface achieved by applying a M\"obius inversion to a catenoid. However, the curves exhibit distinct shapes.


A remarkable point is that the elastic two-teardrop is of class $C^{2,1}=W^{3,\infty}$ but not $C^3$, in particular not globally an elastica.
This loss of regularity is caused by the constraint on self-intersections.
This phenomenon does not appear in Theorem \ref{thm:figure-eight} as a figure-eight elastica is by chance smooth, but is generically observed under the higher-multiplicity constraint, see \cite[Theorem 1.3]{LiYau2}.
Theorem \ref{thm:1.4} reveals that the loss of regularity occurs even in the multiplicity-two case if we fix the rotation number $N$.
This also implies the presence of a nonclassical local minimizer (two-teardrop) without fixing $N$ since fixing $N$ is an open condition.
It is also remarkable that the loss of regularity occurs only when $N=1$; accordingly, Theorem \ref{thm:1.4} classifies all possible solutions to the variational inequality \eqref{eq:pertfunc} and their stability, see Remark \ref{rem:classification} for details.

We prove Theorem \ref{thm:1.4} with variational techniques.
The existence of a minimizer follows by a direct method.
However, because of the constraints, one does not have a global Euler--Lagrange equation but just 
the variational inequality 
\eqref{eq:pertfunc}, which yields a free-boundary-type problem.
To overcome this issue we first give a detailed analysis to reduce the possibility of self-intersections of solutions to \eqref{eq:pertfunc}.
In fact, we prove that the only possible case is a single tangential self-intersection with opposite tangent directions, so that the objective curve can be divided into two parts --- each of which is an embedded closed curve with a single cuspidal singularity and satisfies the elastica equation except at the cusp.
We call such a curve an {\em embedded cuspidal elastica} (ECE).
Our main effort is devoted to an exhaustive classification of all ECEs, where we conclude that there are only two possibilities; {\em teardrop elasticae} and {\em heart-shaped elasticae}, 
see Figure \ref{img:fig2}.
We then perform a further analysis of the shapes of all possible composites of them and deduce that the composite of a teardrop elastica and its reflection is in fact the unique solution to \eqref{eq:pertfunc}.
In particular, this implies uniqueness of minimizers.

\begin{figure}[ht]
    \centering
    \begin{subfigure}[b]{0.5\textwidth}
    \centering
    \includegraphics[width=50mm]{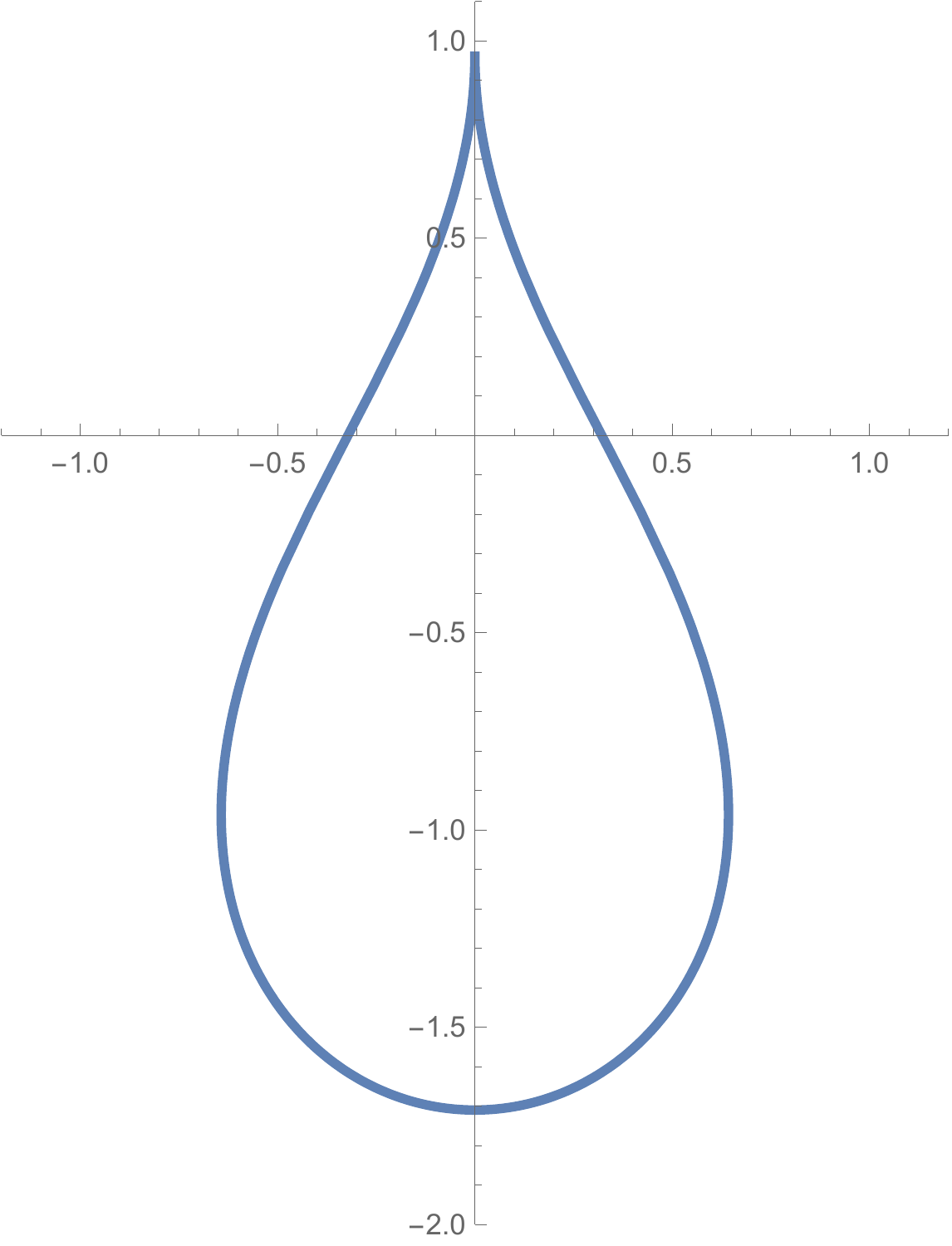}
    \subcaption{Teardrop elastica.}
    \label{img:teardrop}
    \end{subfigure}
    \begin{subfigure}[b]{0.45\textwidth}
    \includegraphics[width=50mm]{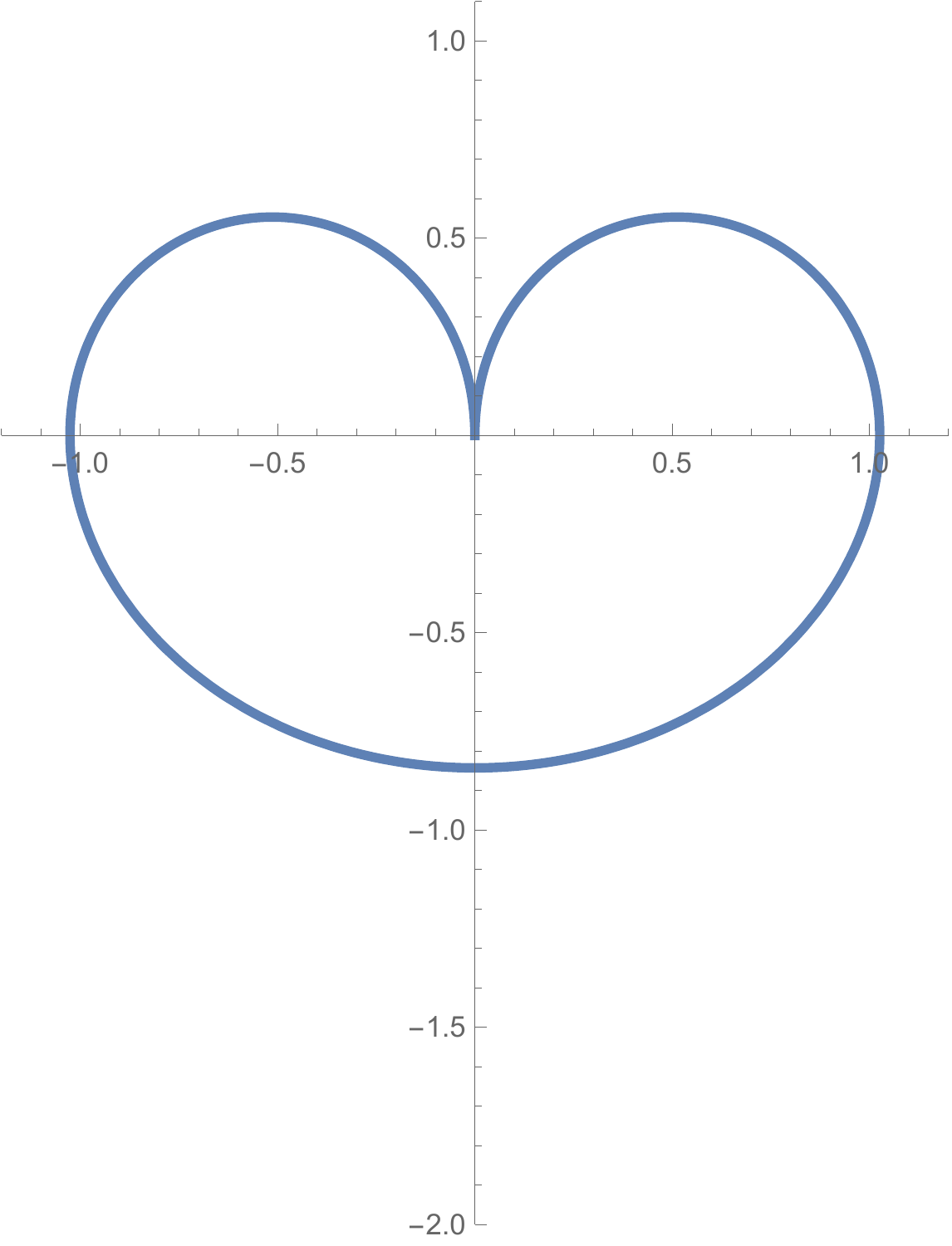}
    \subcaption{Heart-shaped elastica.}\label{img:heart}
    \end{subfigure}
    \caption{Embedded cuspidal elasticae (ECEs).}
    \label{img:fig2}
\end{figure}

The variational analysis of $\bar{B}$ among self-intersecting curves is also important in view of its strong connection to elastic knots, which model knotted springy wires, cf.\ \cite{GallottiPierreLouis,GeRvdM}.
Along the way of the above proof (in 
Lemma \ref{lem:rescaling}) we encounter a unique critical composite of a teardrop elastica and a heart-shaped elastica as in Figure \ref{img:fig3a}, and this shape matches a known candidate of an \emph{elastic knot} for the figure-eight knot class $4_1$, which has been previously observed experimentally and numerically, cf.\ \cite{AvSo14,BartelsReiter,GRvdM} and Figure \ref{img:fig3b}.
In fact, we conjecture that our critical \emph{teardrop-heart} gives an explicit parametrization of an (energy-minimal) elastic knot of class $4_1$ in the sense of Gerlach--Reiter--\mbox{von der Mosel} \cite{GeRvdM}, since Bartels--Reiter's numerical computation suggests that such a planar shape has less energy than another typical candidate of spherical (non-planar) shape, cf.\ \cite[Section 5.3]{BartelsReiter}.

\begin{figure}[ht]
    \centering
    \begin{subfigure}[b]{0.45\textwidth}
    \centering
    \includegraphics[width=50mm]{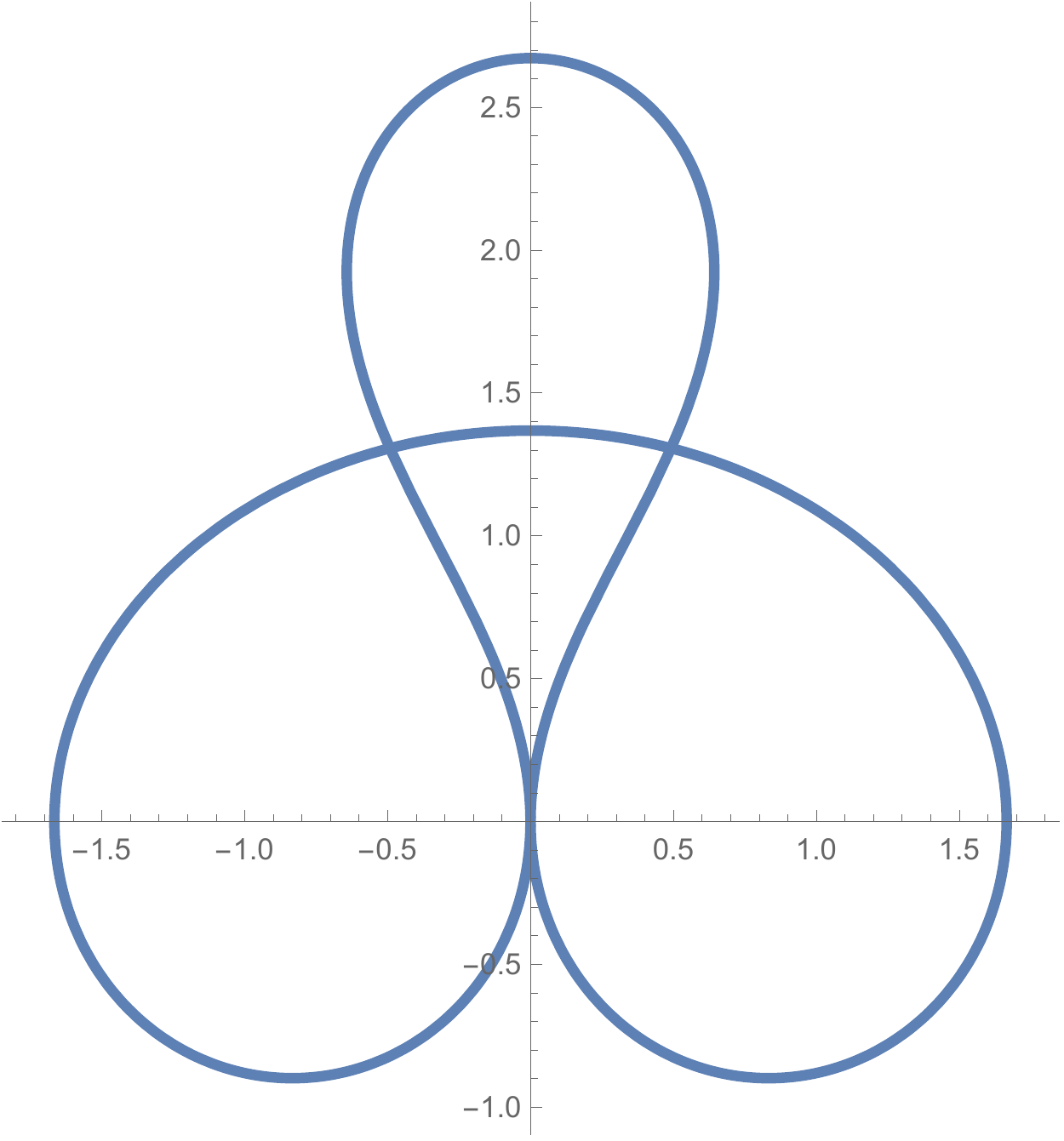}
    \subcaption{The critical teardrop-heart which arises in our analysis.}
    \label{img:fig3a}
    \end{subfigure}
    \hfill
    \begin{subfigure}[b]{0.45\textwidth}
    \hfill
    \includegraphics[width=50mm]{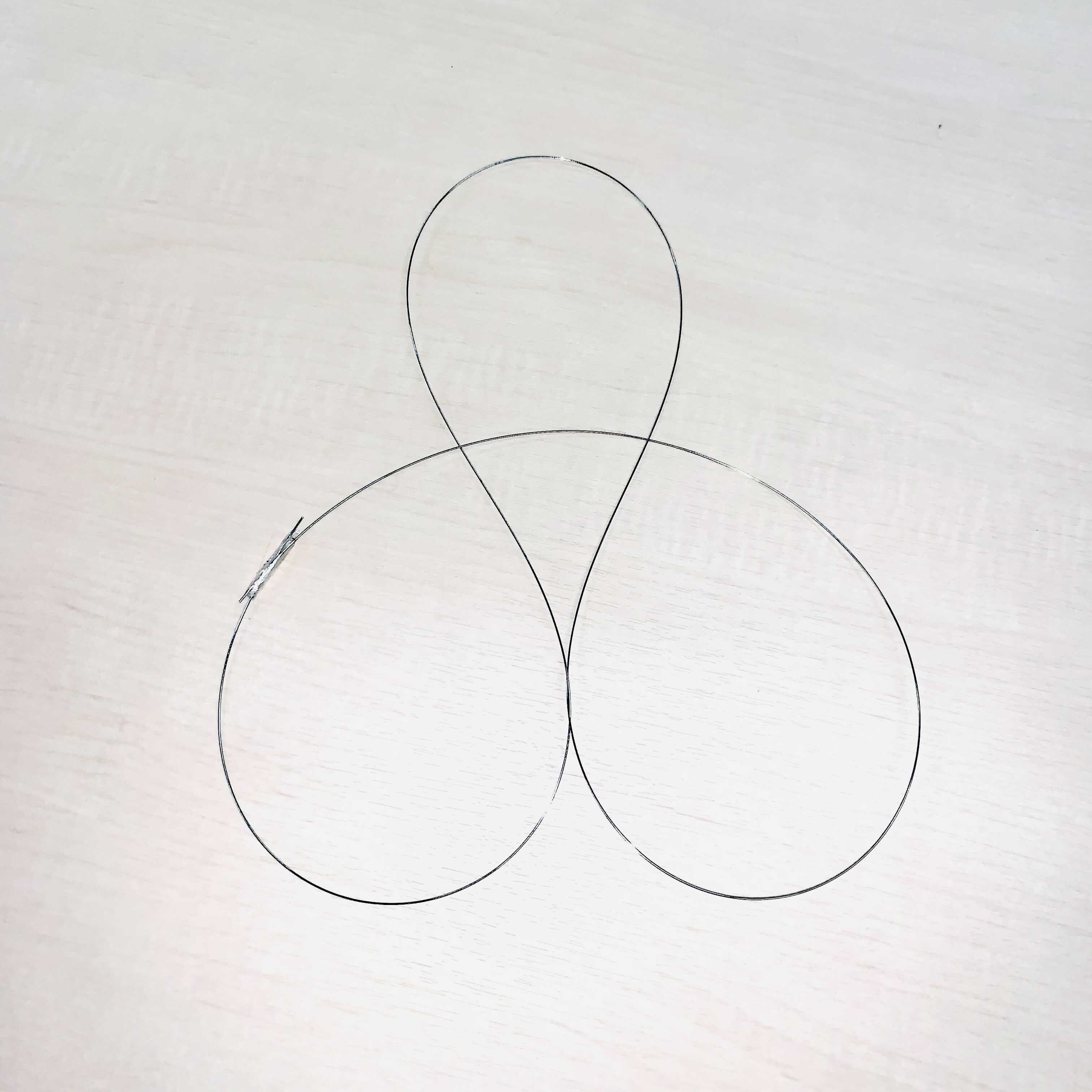}
    \subcaption{A springy wire representing a figure-eight knot strives to achieve a teardrop-heart configuration.}
    \label{img:fig3b}
    \end{subfigure}
    \caption{Elastic teardrop-hearts.}
    \label{img:fig3}
\end{figure}

Finally, we mention some relevant results on different flows for closed curves.
The possibility of losing embeddedness or convexity is indicated by Linn\'{e}r \cite{Linner1989} in 1989 for a certain ($H^1$-)gradient flow of the bending energy, which is different from the elastic flows (see also \cite{Linner1998}).
For the surface diffusion flow, which is also different but of higher order and regarded as an $H^{-1}$-gradient flow of the length, Giga--Ito constructed examples losing embeddedness \cite{GigaIto1998} and convexity \cite{GigaIto1999}, which are later extended by Blatt to a wide class of higher order flows \cite{Blatt}.
We remark that the analysis for the surface diffusion flow is more involved because of possible singularities in finite time, cf.\ \cite{Chou}.
Up to now global existence is ensured only for perturbations of circles, see e.g.\ \cite{ElliottGarcke,EscherMayerSimonett,Wheeler} (and also \cite{MiuraOkabe} for a multiply-covered case).
In particular, Wheeler's result \cite{Wheeler} gives an explicit (but non-optimal) quantitative sufficient condition for all-time embeddedness.

This paper is organized as follows:
In Section \ref{sect:minimization} we prove Theorem \ref{thm:1.4}.
In Section \ref{sect:elasticflows} we apply Theorem \ref{thm:1.4} and Theorem \ref{thm:figure-eight} to prove Theorem \ref{thm:main}.

\subsection*{Acknowledgments.}
Tatsuya Miura is supported by JSPS KAKENHI Grant Numbers 18H03670, 20K14341, and 21H00990, and by Grant for Basic Science Research Projects from The Sumitomo Foundation. 
Fabian Rupp is supported by the DFG (Deutsche Forschungsgemeinschaft), project no.\ 404870139. Moreover, the authors are grateful to the referee for their valuable comments on the original manuscript.

\section{The minimization problem}\label{sect:minimization}

This section is devoted to the proof of Theorem  \ref{thm:1.4}. 
First we fix some notation. 
We define 
\begin{equation}
    H^{2}_{imm}(\mathbb{T}^1; \mathbb{R}^2) := \{ \gamma \in H^2(\mathbb{T}^1; \mathbb{R}^2) : |\gamma'(x)|  \neq 0 \; \textrm{for all} \;  x \in \mathbb{T}^1 \}.
\end{equation}
Analogously we define $C^k_{imm}( \mathbb{T}^1; \mathbb{R}^2)$ and $C^k_{imm}([a,b];\mathbb{R}^2)$ for all $k \geq 1$. 
Further, we define the admissible set 
\begin{equation}\label{eq:admset}
    \mathcal{A}_0 := \{ \gamma \in H^2_{imm}(\mathbb{T}^1; \mathbb{R}^2) : \text{$N[\gamma] = 1$ and $\gamma$ is not injective} \}.
\end{equation}
The first part of Theorem \ref{thm:1.4} can now be formulated equivalently as
\begin{equation}\label{eq:infimumchar}
    \inf_{\gamma \in \mathcal{A}_0} \bar{B}[\gamma] \geq  C_{2T} = \bar{B}[\gamma_{2T}],
\end{equation}
where a rigorous definition of the minimizer $\gamma_{2T}$ is given in Definition \ref{def:two_teardrop}.
The proof of \eqref{eq:infimumchar} is the goal of this section.

\subsection{Preliminaries about Euler's elasticae}

Before we start we fix an important term that we will use throughout this article.
\begin{definition}\label{def:Eelastica}
A regular curve $\gamma : I \rightarrow \mathbb{R}^n$ is called ($\lambda$-)\emph{elastica} (for some $\lambda \in \mathbb{R}$) if it solves the \emph{elastica equation}
\begin{equation}\label{eq:elastica}
    2\nabla_s^2 \kappa + |\kappa|^2\kappa - \lambda \kappa = 0. 
\end{equation}
 If $\lambda$ is not specified, we simply say \emph{elastica}.
\end{definition}

The elastica equation appears in our context since it describes \emph{critical points} of $\bar{B}$ in $H^2_{imm}( \mathbb{T}^1; \mathbb{R}^2)$ (without any constraint). We notice that critical points of $\bar{B}$ without constraint are automatically smooth, 
see \cite[Chapter 5]{Eichmann}. 

 In this section we recall some classical preliminaries about those elasticae. 
 The first result already classifies all possible elasticae in $\mathbb{R}^2$ explicitly and exhaustively. See Appendix \ref{app:elliptic} for a brief review on elliptic functions.

\begin{proposition}[{Planar elasticae, see e.g.  \cite[Proposition B.8]{LiYau1}}] \label{prop:elaclassi}
Let $I \subset \mathbb{R}$ be an interval and let $\gamma \in C^\infty(I;\mathbb{R}^2)$ be an elastica with signed curvature $k[\gamma]$.
Then, up to rescaling, reparametrization and isometries of $\mathbb{R}^2$, $\gamma$ is given by one of the following \emph{elastic prototypes}.  
\begin{enumerate}[label={\upshape(\roman*)}]
\item  (Linear elastica) $\gamma$ is a line, $k[\gamma] = 0$. 
\item (Wavelike elastica) There exists $m \in (0,1)$ such that 
\begin{equation}
 \gamma(s) = \begin{pmatrix}
 2 E(\am( s, m),m )-  s  \\-2 \sqrt{m} \cn(s,m) 
\end{pmatrix}  .
 \end{equation}
 Moreover $k[\gamma] = 2 \sqrt{m} \cn(s,m).$
 \item(Borderline elastica)
  \begin{equation}
\gamma (s ) = \begin{pmatrix}
2 \tanh(s) - s  \\ - 2 \sech(s) 
\end{pmatrix}. 
 \end{equation}
 Moreover $k[\gamma] =  2 \sech(s).$ 
 \item (Orbitlike elastica) There exists $m \in (0,1)$ such that 
  \begin{equation}
 \gamma(s) = \frac{1}{m} \begin{pmatrix}
 2 E(\am(s,m),m)  + (m -2)s \\- 2\dn(s,m)
\end{pmatrix}  .
\end{equation}
Moreover $k[\gamma] = 2 \dn(s,m).$  
\item (Circular elastica) $\gamma$ is a circle. In this case $k[\gamma] = \frac{1}{R}$, where $R$ is the radius of the circle. 
\end{enumerate}
\end{proposition}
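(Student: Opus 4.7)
The plan is to reduce the vector elastica equation to a scalar ODE for the signed curvature $k = k[\gamma]$, extract a first integral, and then classify solutions by the qualitative profile of $k$ before reconstructing $\gamma$ itself. For a planar curve with $\kappa = kN$, the Frenet identity $\partial_s N = -kT$ gives $\nabla_s\kappa = k'N$ and $\nabla_s^2\kappa = k''N$, so \eqref{eq:elastica} collapses to the scalar equation $2k'' + k^3 - \lambda k = 0$. Multiplying by $k'$ and integrating once yields the conserved quantity
\begin{equation*}
(k')^2 = -\tfrac{1}{4}k^4 + \tfrac{\lambda}{2}k^2 + C, \qquad C \in \mathbb{R},
\end{equation*}
whose right-hand side is a quadratic in $u = k^2$ with roots $u_\pm = \lambda \pm \sqrt{\lambda^2+4C}$; the qualitative behavior of $k$ is governed by the sign pattern of $(u_-, u_+)$.

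Next I would distinguish cases. If $k' \equiv 0$, then $k$ is constant and $\gamma$ is either a straight line (case (i), $k\equiv 0$) or a circle (case (v), $k\equiv 1/R$). Otherwise $k$ oscillates or decays between consecutive roots of the right-hand side, and separation of variables together with the defining ODEs of the Jacobi functions $\cn$, $\dn$ and of $\sech$ produces, after a rescaling $s\mapsto\beta s$ that normalizes $\lambda$, the three non-trivial profiles
\begin{equation*}
k(s) = 2\sqrt{m}\,\cn(s,m), \qquad k(s) = 2\sech(s), \qquad k(s) = 2\dn(s,m),
\end{equation*}
with $m \in (0,1)$, corresponding respectively to two simple positive roots in $u$ (wavelike), a limiting double positive root (borderline), and one positive together with one negative root (orbitlike). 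The normalizations $k_{\max} = 2\sqrt{m}$ and $k_{\max}=2$ fix the remaining integration constants.

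Finally, given $k(s)$, the curve itself is recovered from the tangent angle $\theta'(s) = k(s)$ via $\gamma'(s) = (\cos\theta(s), \sin\theta(s))$. The closed-form expressions for $\gamma$ in (ii)--(iv) then follow from the classical antiderivative identities $\int \dn(s,m)\, ds = \am(s,m)$ and $\int \cn(s,m)^2\, ds = \tfrac{1}{m}\bigl(E(\am(s,m),m) - (1-m)s\bigr)$, combined with $\tanh' = \sech^2$ in the borderline case; a rigid motion and a choice of integration constants bring the resulting expressions into the stated normal forms. The main technical burden is this reconstruction step together with the bookkeeping that ensures no solution is missed up to rescaling, reparametrization, and isometry; since the classification is entirely classical, the cleanest route is to follow the exposition of \cite[Proposition B.8]{LiYau1}.
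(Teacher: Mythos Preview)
The paper does not prove this proposition at all; it simply states the classification and refers the reader to \cite[Proposition~B.8]{LiYau1}. Your proposal is a correct outline of the classical argument (scalar reduction of \eqref{eq:elastica} to $2k''+k^3-\lambda k=0$, first integral, case analysis on the quartic in $k$, and reconstruction of $\gamma$ from the tangent angle), and you yourself defer to the same reference at the end---so there is no discrepancy to report.
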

In both the wavelike and the orbitlike case, the \emph{modulus} $m$ is the main shape parameter for the curve.

Throughout this article we will use several important elasticae, which are listed in the table below. 
\begin{table}[ht]
    \small
    \begin{tabular}{|c|c|c|c|}
\hline 
     Name & Type & Modulus $m$ & Reference \\ \hline
     Figure-eight elastica & (ii): wavelike & $m_8 \simeq 0.8261$ & Definition \ref{def:def_fig_eight}, Figure \ref{fig:2Tand8} \\ \hline
     Teardrop elastica & (ii): wavelike & $m_T \simeq {0.7312}$ & Definition \ref{def:teardropelastica}, Figure \ref{img:teardrop} \\ \hline 
     Heart-shaped elastica & (iv): orbitlike & $m_H \simeq 0.8436$ & Definition \ref{def:heartela}, Figure \ref{img:heart} \\ \hline 
\end{tabular}
\caption{Some important elasticae.}
\label{tab1}
\vspace{-0.5cm}
\end{table}

Since this article studies closed curves it is important to identify closed elasticae. It is classical (
see e.g. \cite[Lemma 5.4]{LiYau1}) that only two configurations in $\mathbb{R}^2$ yield closed curves. The first one is given by the circular elastica. The second one is the \emph{figure-eight elastica}, defined as follows.

\begin{definition}\label{def:def_fig_eight}
A smooth curve $\gamma : I \rightarrow \mathbb{R}^2$ is called \emph{figure-eight elastica} if it coincides up to scaling, isometries and reparametrization with 
\begin{equation}\label{eq:def_fig_eight}
    \gamma_8(x) := \begin{pmatrix} 2 E(x,m_8) - F(x,m_8) \\ - 2 \sqrt{m_8} \cos(x) \end{pmatrix} \quad (x \in [0,2\pi]),
\end{equation}
where $m_8 \in (0,1)$ is the unique zero of $m \mapsto 2E(m) - K(m)$ (cf.\
\cite[Lemma B.4]{LiYau1}). The notation $\gamma_8$ will be used exclusively for the specific parametrization in \eqref{eq:def_fig_eight}.
Notice that $k[\gamma_8](x) = 2 \sqrt{m_8} \cos(x)$.
We also define
\begin{equation}\label{eq:defC8}
    C_8:=\bar{B}[\gamma_8] \Big( = 32(2m_8-1)K(m_8)^2 \Big).
\end{equation}
\end{definition}
This is actually a reparametrization of  case (ii) of Proposition \ref{prop:elaclassi} with $m=m_8$. Indeed, $s \mapsto \gamma_8 ( \mathrm{am}(s,m_8) )$ falls into this class. The reason why we choose this different parametrization is that the second component is very easy to express.

Having characterized all closed planar elasticae we can formulate the following result, implying that a minimizer in $\mathcal{A}_0$ cannot be found in the class of elasticae.  

\begin{lemma}\label{lem:ruleoutelastica}
The set $\mathcal{A}_0$ does not contain an elastica.
\end{lemma}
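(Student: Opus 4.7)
The plan is to combine the classification of closed planar elasticae (recalled right before the lemma) with the two defining constraints of $\mathcal{A}_0$: closedness with rotation number exactly $1$ and non-injectivity. Suppose $\gamma \in \mathcal{A}_0$ is an elastica. Since $\gamma$ is a closed planar elastica, the remark immediately preceding Definition \ref{def:def_fig_eight} tells us that, up to rescaling, isometries, and reparametrization, $\gamma$ is either a circular elastica or a figure-eight elastica.

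The first possibility can be ruled out immediately: a circle (traversed once) is injective and therefore fails the non-injectivity requirement built into the definition \eqref{eq:admset} of $\mathcal{A}_0$. If the circle is traversed several times, it is still not in $\mathcal{A}_0$ because then $N[\gamma] \ne 1$.

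The second possibility is excluded by a rotation number computation. Using the explicit parametrization \eqref{eq:def_fig_eight}, we have $k[\gamma_8](x) = 2\sqrt{m_8}\cos(x)$, and a direct calculation along the closed figure-eight yields
\begin{equation}
\int_{\gamma_8} k \; \mathrm{d}s = 0,
\end{equation}
so $N[\gamma_8] = 0 \neq 1$. Since the rotation number is invariant under isometries, rescaling, and reparametrization preserving orientation, every figure-eight elastica has $N = 0$, and hence is not admissible in $\mathcal{A}_0$.

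Combining the two cases yields the lemma. I do not anticipate a serious obstacle; the only thing that really needs to be checked is that the rotation number of the figure-eight elastica is $0$, which follows from the symmetry of the $\cos$-valued curvature over one full period (or equivalently from the fact that the two lobes of a figure-eight are traversed with opposite orientation). All remaining content is a direct invocation of the classification of closed planar elasticae stated in Definition \ref{def:def_fig_eight} and the passage preceding it.
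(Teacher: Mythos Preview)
Your proof follows essentially the same approach as the paper's: invoke the classification of closed planar elasticae, rule out circles by rotation number (or injectivity for the once-covered circle), and rule out figure-eights by computing $N[\gamma_8]=0$. One small point: your invariance argument (``invariant under isometries, rescaling, and reparametrization'') does not cover \emph{multiple coverings} of the figure-eight, since an $\omega$-fold cover with $\omega\geq 2$ is not a reparametrization of the once-covered curve; the paper explicitly notes that an $\omega$-fold figure-eight still has $N=0$ (the total curvature is $\omega\cdot 0$), so you should add a sentence to close this easy case.
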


\begin{proof}
    By \cite[Lemma 5.4]{LiYau1} the only closed elasticae with a self-intersection are (up to scaling and isometries) given by $\omega$-fold  circles ($\omega \geq2$) and $\omega$-fold  figure-eight elasticae ($\omega \geq 1$).
    For an $\omega$-fold covering of the circle one readily checks that $N[\gamma] = \omega \geq 2$, which means $\gamma \not \in \mathcal{A}_0$. 
    If $\gamma$ is a (one-fold) figure-eight elastica (
   as in Definition \ref{def:def_fig_eight}) one has
    \begin{equation}
        N[\gamma]= N [\gamma_8] =  \frac{\sqrt{m_8}}{\pi} \int_0^{2\pi} \frac{\cos(\theta)}{\sqrt{1- m_8 \sin^2(\theta)}} \; \mathrm{d}\theta = 0.
    \end{equation}
    Hence the rotation number of the figure-eight is zero, and the same holds true for its multiple covers. In particular none of those curves lie in $\mathcal{A}_0$. 
\end{proof}

Even though this result sounds not promising at first sight we will actually conclude many properties of minimizers from the fact that they \emph{cannot} be elasticae.

\subsection{Existence of minimizers and the variational inequality}

In this section we prove existence of minimizers via the direct method. We first examine the structure of the admissible set $\mathcal{A}_0$ defined in \eqref{eq:admset}.

\begin{proposition}\label{prop:admissibleset}
The set $\mathcal{A}_0$ is weakly closed in $H^2_{imm}( \mathbb{T}^1; \mathbb{R}^2)$ (with the weak relative topology of $H^2(\mathbb{T}^1;\mathbb{R}^2)$). 
\end{proposition}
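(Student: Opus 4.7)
The plan is to take a sequence $(\gamma_k) \subset \mathcal{A}_0$ with $\gamma_k \rightharpoonup \gamma$ weakly in $H^2(\mathbb{T}^1;\mathbb{R}^2)$ whose weak limit $\gamma$ lies in $H^2_{imm}(\mathbb{T}^1;\mathbb{R}^2)$, and to verify the two defining conditions of $\mathcal{A}_0$ for $\gamma$, namely $N[\gamma]=1$ and non-injectivity. The workhorse throughout is the one-dimensional compact embedding $H^2(\mathbb{T}^1)\hookrightarrow\hookrightarrow C^1(\mathbb{T}^1)$, which promotes the weak $H^2$ convergence to strong $C^1$ convergence $\gamma_k\to\gamma$. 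Combined with the uniform lower bound $|\gamma'|\geq c>0$ on $\mathbb{T}^1$ (from $\gamma\in H^2_{imm}$ and compactness), this yields $|\gamma_k'|\geq c/2$ for $k$ large, so that the unit tangents $T_k:=\gamma_k'/|\gamma_k'|$ converge uniformly to $T:=\gamma'/|\gamma'|$.

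For the rotation number, I would observe that $2\pi N[\gamma_k]$ equals the absolute value of the winding number of the nowhere-vanishing continuous map $T_k\colon \mathbb{T}^1\to S^1$. Since winding number is locally constant under $C^0$ convergence of nowhere-vanishing $S^1$-valued maps, the integer sequence $N[\gamma_k]\equiv 1$ passes to the limit to give $N[\gamma]=1$. (An alternative route is to pass to the limit in the integral $\int k\,ds=\int \det(\gamma_k',\gamma_k'')/|\gamma_k'|^2\,dx$, using weak $H^2$ convergence of $\gamma_k''$ together with strong $C^0$ convergence and the uniform lower bound on $|\gamma_k'|$.)

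For non-injectivity, for each $k$ I pick $x_k\neq y_k\in\mathbb{T}^1$ with $\gamma_k(x_k)=\gamma_k(y_k)$ and extract a subsequence so that $x_k\to x_0$ and $y_k\to y_0$. Uniform convergence yields $\gamma(x_0)=\gamma(y_0)$, and if $x_0\neq y_0$ the proof is complete. The main obstacle is the potential collapse $x_0=y_0$, which I would rule out via the mean-value identity
\begin{equation*}
  0=\gamma_k(x_k)-\gamma_k(y_k)=(x_k-y_k)\int_0^1\gamma_k'\bigl(y_k+t(x_k-y_k)\bigr)\,dt
\end{equation*}
taken in a local chart around $x_0$. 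Dividing by $x_k-y_k\neq 0$ and letting $k\to\infty$, the uniform convergence of $\gamma_k'$ and uniform continuity of $\gamma'$ force $\gamma'(x_0)=0$, contradicting $\gamma\in H^2_{imm}$. Hence $x_0\neq y_0$, $\gamma$ is not injective, and $\gamma\in\mathcal{A}_0$ as required.
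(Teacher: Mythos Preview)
Your proof is correct and follows essentially the same approach as the paper: both upgrade weak $H^2$ convergence to $C^1$ convergence via Sobolev embedding, then verify separately that the rotation number and non-injectivity pass to the limit. The only difference is presentational---the paper outsources these two verifications to cited lemmas (closedness of noninjective immersions in $C^1_{imm}$ from \cite{LiYau1} and weak $H^2$-continuity of $N[\cdot]$ from \cite{ToriofRev}), whereas you supply the direct arguments (winding-number stability for $N$, and the mean-value collapse argument for non-injectivity) in full.
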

\begin{proof}
    Suppose that $(\gamma_j)_{j \in \mathbb{N}} \subset \mathcal{A}_0$ is a sequence and $\gamma \in H^2_{imm}(\mathbb{T}^1;\mathbb{R}^2)$  such that $\gamma_j \rightharpoonup \gamma$ weakly in $H^2(\mathbb{T}^1;\mathbb{R}^2)$. By Sobolev embedding we have $\gamma_j \rightarrow \gamma$ in $C^1_{imm}(\mathbb{T}^1;\mathbb{R}^2)$. From \cite[Lemma 4.1 and Lemma 4.3]{LiYau1} we infer that the set of noninjective immersions is closed in $C^1_{imm}(\mathbb{T}^1;\mathbb{R}^2)$, and hence $\gamma$ is noninjective.  Thus there exist $x_1,x_2\in \mathbb{T}^1, x_1 \neq x_2$ such that $\gamma(x_1) = \gamma(x_2)$. From the fact that $N[\gamma_j] = 1$ and 
    using that by \cite[Lemma 4.9]{ToriofRev}
     $N[\cdot]$ is weakly continuous in $H^2_{imm}(\mathbb{T}^1;\mathbb{R}^2)$ 
    we infer $N[\gamma] = 1$. All in all we conclude that $\gamma \in \mathcal{A}_0$.
\end{proof}

In the course of the minimization procedure we will make use of the many invariances of $\bar{B}$. Recall that $\bar{B}$ is invariant with respect to scaling, Euclidean isometries and reparametrization. 

\begin{proposition}\label{prop:existence}
There exists $\gamma_0 \in \mathcal{A}_0$ such that 
\begin{equation}
    \bar{B}[\gamma_0] = \inf_{\gamma \in \mathcal{A}_0} \bar{B}[\gamma]. 
\end{equation}
\end{proposition}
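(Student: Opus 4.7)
The plan is to apply the direct method of the calculus of variations, exploiting the invariances of $\bar{B}$ (scaling, isometries, reparametrization) to produce a minimizing sequence with $H^2$-compactness, and then to combine Proposition \ref{prop:admissibleset} with weak lower semicontinuity of $\bar{B}$.

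First I would observe that $\mathcal{A}_0$ is nonempty (e.g.\ a smooth limaçon-type curve with rotation number one and a transverse self-intersection), and that $\bar{B}\geq 0$, so $\mu:=\inf_{\gamma\in\mathcal{A}_0}\bar{B}[\gamma]\in[0,\infty)$. Pick any minimizing sequence $(\tilde\gamma_j)_{j\in\mathbb{N}}\subset\mathcal{A}_0$ with $\bar{B}[\tilde\gamma_j]\to\mu$. Using the scaling invariance of $\bar{B}$, I rescale each curve so that $L[\tilde\gamma_j]=1$, and then reparametrize it proportionally to arclength on $\mathbb{T}^1=\mathbb{R}/\mathbb{Z}$ so that $|\gamma_j'(x)|\equiv 1$ for every $x$. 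Finally, using translation invariance, I fix $\gamma_j(0)=0$. Both the rotation number and the noninjectivity are preserved under these operations, so $(\gamma_j)_{j\in\mathbb{N}}\subset\mathcal{A}_0$ is still a minimizing sequence.

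Next, I would derive $H^2$-bounds. Since $|\gamma_j'|\equiv 1$, one has $|\gamma_j(x)|\leq 1$ and $\|\gamma_j'\|_{L^\infty}=1$, so $\gamma_j$ is uniformly bounded in $W^{1,\infty}(\mathbb{T}^1;\mathbb{R}^2)$. Because the parametrization is by arclength (up to the constant factor $L[\gamma_j]=1$), the curvature satisfies $|\gamma_j''|=|\kappa[\gamma_j]|$, and hence
\begin{equation}
\int_{\mathbb{T}^1}|\gamma_j''|^2\,\mathrm{d}x = \int_{\gamma_j}|\kappa|^2\,\mathrm{d}s = B[\gamma_j] = \bar{B}[\gamma_j] \longrightarrow \mu.
\end{equation}
Thus $(\gamma_j)$ is bounded in $H^2(\mathbb{T}^1;\mathbb{R}^2)$. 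Up to a subsequence, $\gamma_j\rightharpoonup\gamma_0$ weakly in $H^2$ and, by the compact Sobolev embedding $H^2(\mathbb{T}^1)\hookrightarrow C^1(\mathbb{T}^1)$, strongly in $C^1$. In particular $|\gamma_0'|\equiv 1$, so $\gamma_0\in H^2_{imm}(\mathbb{T}^1;\mathbb{R}^2)$.

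To conclude, Proposition \ref{prop:admissibleset} yields $\gamma_0\in\mathcal{A}_0$, so $\bar{B}[\gamma_0]\geq\mu$. For the reverse inequality I use weak lower semicontinuity: the $C^1$-convergence gives $L[\gamma_j]\to L[\gamma_0]=1$, while $\gamma_j''\rightharpoonup\gamma_0''$ weakly in $L^2$ yields $\int_{\mathbb{T}^1}|\gamma_0''|^2\,\mathrm{d}x\leq\liminf_{j\to\infty}\int_{\mathbb{T}^1}|\gamma_j''|^2\,\mathrm{d}x$. Since the parametrization is by arclength, the left-hand side equals $B[\gamma_0]$, and therefore
\begin{equation}
\bar{B}[\gamma_0]=L[\gamma_0]\,B[\gamma_0]\leq\liminf_{j\to\infty}L[\gamma_j]\,B[\gamma_j]=\mu.
\end{equation}
Hence $\bar{B}[\gamma_0]=\mu$, proving the proposition. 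The only delicate point is ensuring that the weak $H^2$-limit stays an immersion and remains noninjective, but this is precisely what the arclength normalization and Proposition \ref{prop:admissibleset} give; beyond this, the argument is a standard application of the direct method.
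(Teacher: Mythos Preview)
Your proof is correct and follows essentially the same approach as the paper: normalize a minimizing sequence via scaling, arclength reparametrization, and translation to obtain uniform $H^2$-bounds, extract a weak $H^2$-limit, and combine the $C^1$-compact embedding with Proposition \ref{prop:admissibleset} and weak lower semicontinuity of the $L^2$-norm to conclude. The only cosmetic addition is your explicit remark that $\mathcal{A}_0$ is nonempty, which the paper leaves implicit.
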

\begin{proof}
    Let $(\gamma_j)_{j \in \mathbb{N}} \subset \mathcal{A}_0$ be such that 
$
        \bar{B}[\gamma_j] \rightarrow \inf_{\gamma \in \mathcal{A}_0} \bar{B}[\gamma]. 
$ Since $\bar{B}$ is scaling invariant, we can without loss of generality assume that $L[\gamma_j] = 1$ for all $j \in \mathbb{N}.$ By reparametrization invariance we may as well assume that $|\gamma_j'(x)| = L[\gamma_j] = 1$ for all $x \in \mathbb{T}^1$ and all $j \in \mathbb{N}$. By translation invariance we may assume $\gamma_j(0) = (0,0)$ for all $j$. We show next that $(\gamma_j)_{j \in \mathbb{N}}$ is bounded in $H^2(\mathbb{T}^1;\mathbb{R}^2)$. To this end, observe that
    \begin{equation}
        \bar{B}[\gamma_j] = \int_0^1 |\partial_s^2 \gamma_j|^2 \; \mathrm{d}s = \int_0^1 |\gamma_j''(x)|^2 \; \mathrm{d}x.
    \end{equation}
    This implies that 
 $
        (||\gamma_j''||_{L^2})_{j \in \mathbb{N}} 
  $ is bounded. Moreover, $||\gamma_j'||_{L^2} = L[\gamma_j]=1 $ is also uniformly bounded in $j$. Further, $\gamma_j(0)= (0,0)$ implies
  \begin{equation}
      |\gamma_j(x)| = \left \vert \int_0^x \gamma_j'(y) \; \mathrm{d}y \right\vert \leq L[\gamma_j] = 1
  \end{equation}
  and hence also $(||\gamma_j||_{L^2})_{j \in \mathbb{N}}$ is uniformly bounded in $j$. This yields that $(\gamma_j)_{j \in \mathbb{N}}$ is bounded in $H^2(\mathbb{T}^1;\mathbb{R}^2)$. We can now extract a subsequence (which we do not relabel) such that $\gamma_j \rightharpoonup \gamma_0$ for some $\gamma_0 \in H^2(\mathbb{T}^1;\mathbb{R}^2)$. By Sobolev embedding one has also $\gamma_j \rightarrow \gamma_0$ in $C^1(\mathbb{T}^1;\mathbb{R}^2)$. We now claim that $\gamma_0 \in H^2_{imm}(\mathbb{T}^1;\mathbb{R}^2)$. Indeed, one has for all $x \in \mathbb{T}^1$
 \begin{equation}
     |\gamma_0'(x)| = \lim_{j \rightarrow \infty}|\gamma_j'(x)|  = \lim_{j \rightarrow \infty} L[\gamma_j] = 1.
 \end{equation}
 In particular, $\gamma_0 \in H^2_{imm}( \mathbb{T}^1; \mathbb{R}^2)$ is parametrized by arclength and $L[\gamma_0] = 1$. Moreover, by Proposition \ref{prop:admissibleset} we infer that $\gamma_0 \in \mathcal{A}_0$. 
 In addition, weak lower semicontinuity of the $L^2$-norm implies
 \begin{align}
     \bar{B}[\gamma_0]& = \int_0^1 |\gamma_0''(x)|^2 \; \mathrm{d}x \leq \liminf_{j \rightarrow \infty} \int_0^1 |\gamma_j''(x)|^2 \; \mathrm{d}x = \liminf_{j \rightarrow \infty} \bar{B}[\gamma_j] = \inf_{\gamma \in \mathcal{A}_0} \bar{B}[\gamma]. 
 \end{align}
 Therefore $\gamma_0$ is a minimizer. 
\end{proof}
 In the following we will mainly examine a broader class than the class of minimizers --- namely solutions of the  \emph{variational inequality}, defined as follows. 
\begin{definition}[Variational inequality]\label{def:cript} A curve $\gamma \in \mathcal{A}_0$ is called a solution to the  \emph{variational inequality} of $\bar{B}$ if 
\begin{equation}\label{eq:pertfunc}
    \frac{\mathrm{d}}{\mathrm{d}\varepsilon}  \Big\vert_{\varepsilon = 0} \bar{B}[\gamma_\varepsilon] \geq 0 \; \; \quad  \textrm{for all} \; (\varepsilon \mapsto \gamma_\varepsilon) \in C^1([0,\varepsilon_0); \mathcal{A}_0 ) \; 
    \textrm{with} \; \gamma_0 = \gamma,
\end{equation}
where $C^1([0,\varepsilon_0);\mathcal{A}_0)$ is the set of all perturbations 
$$(\varepsilon \mapsto \gamma_\varepsilon) \in C^1([0,\varepsilon_0);H^2_{imm}( \mathbb{T}^1, \mathbb{R}^2))$$ 
such that $\gamma_\varepsilon \in \mathcal{A}_0$ for all $\varepsilon \in [0, \varepsilon_0).$

\end{definition}
In the sequel we will only use linear perturbations of the form $\gamma_\epsilon = \gamma + \epsilon \phi\in\mathcal{A}_0$.
By the Frechet differentiability of $L$ and $B$, any solution $\gamma$ to \eqref{eq:pertfunc} satisfies that for all $\phi \in C^\infty(\mathbb{T}^1;\mathbb{R}^2)$ such that $\gamma + \varepsilon \phi \in \mathcal{A}_0$ for any small $\varepsilon>0$,
\begin{equation}
    \frac{\mathrm{d}}{\mathrm{d}\varepsilon}  \Big\vert_{\varepsilon = 0} \bar{B}[\gamma_\varepsilon] = L[\gamma] DB[\gamma](\phi) + B[\gamma] DL[\gamma](\phi) \geq 0.
\end{equation}
It is obvious that each minimizer $\gamma_0 \in \mathcal{A}_0$ solves the variational inequality. Solutions of the variational inequality can be seen as `critical points' of the energy $\bar{B}$ in a generalized sense.   

In the context of a standard critical point one would usually expect an equality statement in \eqref{eq:pertfunc} and also allow for negative values of $\varepsilon$ in the perturbations. There is no need for that ---  a perturbation in the direction of $\phi$ with a negative value of $\varepsilon$ corresponds to a perturbation with $-\phi$ with a positive value of $\varepsilon$. In our context it is important to distinguish between perturbations with $\phi$ and $-\phi$, since it may happen that only one of these is admissible in $\mathcal{A}_0$.
We stress in this context that if we have a perturbation curve $(\varepsilon \mapsto \gamma_\varepsilon) \in C^1((-\varepsilon_0,\varepsilon_0); \mathcal{A}_0)$ with $\gamma_0 = \gamma$ we infer 
    \begin{equation}\label{eq:varineqpm}
    0 =  \frac{\mathrm{d}}{\mathrm{d}\varepsilon} \Big\vert_{\varepsilon = 0} \bar{B}[\gamma_\varepsilon].  
\end{equation}

If $\gamma \in \mathcal{A}_0$ is not an inner point of $\mathcal{A}_0$ in the $H^2$-topology, some perturbations are not allowed in \eqref{eq:pertfunc}, which means that standard Euler-Lagrange methods and regularity theory might not apply.
It will actually turn out that no minimizer $\gamma_0\in \mathcal{A}_0$ is an inner point. This is why the minimizer $\gamma_{2T}$ will not be a (global) solution of the elastica equation.  

The following lemma characterizes which perturbations are sufficient to conclude that the elastica equation is solved. 

    \begin{lemma}[{
    see \cite[Proof of Lemma 5.8]{LiYau1}}] \label{lem:localize} Let $\gamma \in H^2_{imm}((a,b);\mathbb{R}^2)$. 
    Then the following statements are equivalent.
    \begin{enumerate}[label={\upshape(\roman*)}]
     \item {For all $\phi \in C_0^\infty ( (a,b); \mathbb{R}^2)$ one has
        \begin{equation}
           L[\gamma] D B [\gamma](\phi)  +  B[\gamma] DL[\gamma](\phi) \geq 0.
        \end{equation}}
        \item  For all $\phi \in C_0^\infty ( (a,b); \mathbb{R}^2)$ one has
        \begin{equation}
           L[\gamma] D B [\gamma](\phi)  +  B[\gamma] DL[\gamma](\phi) = 0.
        \end{equation}
        \item For all $x \in (a,b)$ there exists an open neighborhood $U_x \subset (a,b)$ such that for all $\phi \in C_0^\infty(U_x;\mathbb{R}^2)$ one has  
        \begin{equation}
           L[\gamma] D B [\gamma](\phi)  +  B[\gamma] DL[\gamma](\phi) = 0.
        \end{equation}
    \end{enumerate}
    If one of the above statements holds true then $\gamma \in C^\infty_{imm}({[a,b]};\mathbb{R}^2)$ and $\gamma$ solves the elastica equation \eqref{eq:elastica} on $[a,b]$ for $\lambda = \frac{B[\gamma]}{L[\gamma]}$. 
    The analogous statement remains true if one replaces $(a,b)$ by $\mathbb{T}^1$.
    \end{lemma}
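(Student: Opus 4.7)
My plan is to handle the three equivalences by elementary linear manipulations and then to extract regularity from the resulting Euler--Lagrange equation via a standard bootstrap argument.

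First, for (i)$\Rightarrow$(ii) I use that $DB[\gamma]$ and $DL[\gamma]$ are bounded linear functionals on $H^2$, so the map $\phi \mapsto L[\gamma]DB[\gamma](\phi) + B[\gamma]DL[\gamma](\phi)$ is linear in $\phi$. Since $-\phi \in C_0^\infty((a,b);\mathbb{R}^2)$ whenever $\phi$ is, applying (i) to both $\phi$ and $-\phi$ forces equality, which is (ii). The implication (ii)$\Rightarrow$(iii) is immediate upon taking $U_x = (a,b)$ for every $x$.

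For the substantive implication (iii)$\Rightarrow$(ii) I localize via a partition of unity. Given $\phi \in C_0^\infty((a,b);\mathbb{R}^2)$, cover the compact set $K := \operatorname{supp}(\phi)$ by finitely many neighborhoods $U_{x_1},\ldots,U_{x_N}$ as in (iii), and choose subordinate smooth cutoffs $\chi_j$ with $\sum_j \chi_j \equiv 1$ on $K$ and $\operatorname{supp}(\chi_j)\subset U_{x_j}$. Then $\phi = \sum_j \chi_j \phi$ with each $\chi_j \phi \in C_0^\infty(U_{x_j};\mathbb{R}^2)$, so the local equality from (iii) applied to each summand, together with linearity in $\phi$, yields equality for $\phi$. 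Trivially (ii)$\Rightarrow$(i). The same argument transports verbatim to $\mathbb{T}^1$ by working in coordinate charts and a global partition of unity.

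For the regularity and elastica claim, observe that (ii) is precisely the weak form of the Euler--Lagrange equation for $\bar{B} = L \cdot B$ tested against compactly supported variations. Dividing by the positive constant $L[\gamma]$ yields the weak $\lambda$-elastica equation with $\lambda = B[\gamma]/L[\gamma]$. Reparametrizing by arclength (valid since $\gamma \in H^2_{imm}$ forces the arclength map $x \mapsto s(x)$ to be a $C^1$-diffeomorphism by Sobolev embedding) turns \eqref{eq:elastica} into a quasilinear fourth-order ODE whose top-order term has smooth nondegenerate coefficients. A standard bootstrap then lifts $\gamma$ from $H^2$ to $H^3$, $H^4$, and inductively to $C^\infty$ on $(a,b)$; smoothness up to the endpoints follows from ODE extension, delivering $\gamma \in C^\infty_{imm}([a,b];\mathbb{R}^2)$ solving \eqref{eq:elastica} with the stated $\lambda$. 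I would follow \cite[Chapter 5]{Eichmann} and \cite[Lemma 5.8]{LiYau1} for this step rather than reproduce it.

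The main obstacle will be the regularity bootstrap: starting from merely $H^2_{imm}$-regularity, one must confirm that the quasilinear fourth-order elastica operator in arclength parametrization genuinely propagates Sobolev regularity, which relies on the immersion condition keeping the arclength parametrization well-defined and the leading coefficient uniformly nondegenerate on compact subintervals. The equivalence steps themselves are essentially formal; the weight of the lemma lies entirely in this classical regularity computation, which is why it is invoked by reference rather than derived from scratch.
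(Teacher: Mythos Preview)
The paper does not supply its own proof of this lemma; it is stated with a reference to \cite[Proof of Lemma 5.8]{LiYau1} and used as a black box. Your proposal is correct and follows precisely the standard route that reference takes: the equivalences (i)$\Leftrightarrow$(ii)$\Leftrightarrow$(iii) are purely formal consequences of linearity in $\phi$ (for (i)$\Rightarrow$(ii)) and a partition-of-unity argument (for (iii)$\Rightarrow$(ii)), while the regularity conclusion is obtained by recognising (ii) as the weak Euler--Lagrange equation for $\bar{B}$, passing to arclength, and bootstrapping. Your remark that the weight of the lemma lies in the regularity step is accurate, and deferring that to \cite{Eichmann} and \cite{LiYau1} is exactly what the paper does.
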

    
    Using these findings we will characterize solutions of the variational inequality. 

\subsection{Self-intersection properties and regularity of solutions to the variational inequality}
In this section we study some properties of solutions of \eqref{eq:pertfunc} concerning self-intersection. Precisely, we will prove that each solution to \eqref{eq:pertfunc} may have only one tangential self-intersection. The arguments used in this section are similar to \cite[Section 5]{LiYau1}.

For arbitrary $\gamma \in \mathcal{A}_0$ we introduce the notation 
\begin{equation}
    S[\gamma] := \{ p \in \mathbb{R}^2 : \mathcal{H}^{0}(\gamma^{-1}( \{p \})) > 1 \},
\end{equation}
where $\mathcal{H}^0$ denotes the counting measure. For $p \in S[\gamma]$ we define the quantity 
$$\mathrm{mult}[\gamma](p) := \mathcal{H}^{0} ( \gamma^{-1} ( \{ p \} )).$$
Moreover the set of \emph{tangential} self-intersections is denoted by
\begin{equation}
    S_{\tan}[\gamma] := \{ p \in S[\gamma] : \mathrm{det}(\gamma'(x_1), \gamma'(x_2)) = 0 \;  \textrm{for some} \; x_1\neq x_2 \in \gamma^{-1}(\{ p \} ) \} .
\end{equation}
Notice that $\mathrm{det}(\gamma'(x_1),\gamma'(x_2)) = 0$ yields (by linear dependence of 
$\gamma'(x_1)$ and $\gamma'(x_2)$)
that $T_\gamma(x_1) = \pm T_\gamma(x_2)$, where {$T_\gamma = \frac{\gamma'}{|\gamma'|}$} denotes the \emph{unit tangent} of $\gamma$. 
In this section we will prove

 \begin{proposition}\label{prop:selfinters}
    Let $\gamma_0 \in \mathcal{A}_0$ be a solution to \eqref{eq:pertfunc}. 
    Then 
    $S[\gamma_0] = S_{\tan}[\gamma_0] = \{ p \}$ for some $p \in \mathbb{R}^2$ and $\mathrm{mult}[\gamma](p) = 2$.
    In addition, for the two distinct points $a,b\in\gamma_0^{-1}(\{p\})$, the curves $\gamma_0\vert_{[a,b]}$ and $\gamma_0\vert_{[b,a]}$ are smooth and solve the elastica equation.  
    (In particular, $\gamma_0\vert_{[a,b)}$ and $\gamma_0\vert_{[b,a)}$ are injective.) Moreover, $T_{\gamma_0}(a) = -T_{\gamma_0}(b)$.
    \end{proposition}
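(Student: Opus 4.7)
My plan is to progressively constrain the self-intersection structure of $\gamma_0$. At each stage, if the structure admits enough two-sided admissible perturbations in $\mathcal{A}_0$, then Lemma \ref{lem:localize} upgrades the variational inequality to the full elastica equation globally, contradicting Lemma \ref{lem:ruleoutelastica}; this drives the structure toward the minimal non-trivial case.

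I would first rule out transversal self-intersections. If $p=\gamma_0(a)=\gamma_0(b)$ with $\gamma_0'(a),\gamma_0'(b)$ linearly independent, then the implicit function theorem applied to $F(x,y,\gamma)=\gamma(x)-\gamma(y)$ (using $H^2\hookrightarrow C^1$), together with integer-valuedness and weak continuity of $N[\cdot]$, guarantees $\gamma_0+\varepsilon\phi\in\mathcal{A}_0$ for every $\phi\in C^\infty(\mathbb{T}^1;\mathbb{R}^2)$ and small $|\varepsilon|$. The variational inequality then becomes an equality, and Lemma \ref{lem:localize} produces the forbidden global elastica. Next, localised perturbations $\phi\in C_0^\infty(U_x;\mathbb{R}^2)$ supported away from $\gamma_0^{-1}(S_{\tan}[\gamma_0])$ keep all tangential crossings intact, so Lemma \ref{lem:localize} yields that $\gamma_0$ is a smooth $\lambda$-elastica on $\mathbb{T}^1\setminus\gamma_0^{-1}(S_{\tan}[\gamma_0])$ with $\lambda=B[\gamma_0]/L[\gamma_0]$. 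Finally, if either $|S_{\tan}[\gamma_0]|\geq 2$ or some tangential self-intersection had multiplicity $\geq 3$, then for every $x\in\mathbb{T}^1$ one could choose a neighborhood where perturbations still preserve \emph{some other} tangential crossing; two-sided admissibility everywhere would again give the forbidden global elastica. Hence $S[\gamma_0]=\{p\}$, $\mathrm{mult}[\gamma_0](p)=2$, and each sub-arc is a smooth $\lambda$-elastica up to its closure.

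It remains to establish $T(a)=-T(b)$. I argue by contradiction, assuming $T(a)=T(b)=e_1$ after a rigid motion fixing $p=0$. Near $p$ the two branches of $\gamma_0$ are graphs $\tau\mapsto(\tau,\tfrac12 k(a)\tau^2+o(\tau^2))$ and $\tau\mapsto(\tau,\tfrac12 k(b)\tau^2+o(\tau^2))$ above the common tangent line. A short graph-comparison shows that a normal perturbation $\phi=\psi(\tau)e_2$ near $a$ is $+\varepsilon$-admissible precisely when $\psi(0)>0$ (via a quadratic balance between $\tfrac12(k(b)-k(a))\tau^2$ and $\varepsilon\psi$, two new transverse intersections are created), while $\psi'(0)$ remains free. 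Integrating $D\bar{B}[\gamma_0](\phi)$ by parts on each sub-arc, the interior terms vanish by the $\lambda$-elastica equation and leave boundary contributions at $a$ that are linear in $\psi(0)$ and $\psi'(0)$, with coefficients depending on $[\nabla_s\kappa]_a$ and $[\kappa]_a$ respectively. The freedom of $\psi'(0)$ forces $[\kappa]_a=0$, and symmetrically $[\kappa]_b=0$, so $\gamma_0$ is $C^2$ across both $a$ and $b$, and each sub-arc becomes a smooth $\lambda$-elastica returning to $p$ with matching tangent.

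To conclude, I apply Proposition \ref{prop:elaclassi} prototype by prototype: the only $\lambda$-elasticae that return to the basepoint with matching tangent direction are single-cover circles and single-cover figure-eight elasticae (wavelike arcs close only at the figure-eight modulus $m_8$; the remaining non-circular prototypes do not close). A figure-eight sub-arc would carry an additional interior crossing, contradicting $|S[\gamma_0]|=1$; hence each sub-arc is a circle through $p$ tangent to $e_1$ of signed curvature $\pm\sqrt{\lambda}$. The matching $[\kappa]_a=0$ pins down the same sign and center for both circles, so $\gamma_0$ is a double cover of a single circle, giving $N[\gamma_0]=2\neq 1$ --- the desired contradiction. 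Therefore $T(a)=-T(b)$. The main obstacle is this last step: one must extract curvature continuity from genuinely one-sided normal admissibility (using the free tangential slope $\psi'(0)$), and then navigate the full elastic classification carefully enough to finally leverage $N[\gamma_0]=1$ for the contradiction.
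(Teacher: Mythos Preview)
Your reduction of the self-intersection structure---ruling out transversal crossings, then multiple points or high multiplicity via localized two-sided perturbations and Lemma~\ref{lem:localize}/Lemma~\ref{lem:ruleoutelastica}---follows the same logic as the paper's Steps~1--4, only reordered; this part is correct.

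The substantive divergence is in ruling out $T_{\gamma_0}(a)=T_{\gamma_0}(b)$. You propose to (i) extract curvature continuity $[\kappa]_a=[\kappa]_b=0$ from one-sided perturbations, (ii) classify elastica \emph{arcs} returning to a basepoint with matching tangent as single-cover circles or figure-eights, and (iii) use (i) to force both sub-arcs to be the \emph{same} circle, hence $N=2$. Step (ii) is a genuine gap: the closed-elastica classification you invoke applies to \emph{smooth} closed curves, whereas your sub-arc $\gamma_0|_{[a,b]}$ is only $C^1$-closed; the continuity $[\kappa]_a=0$ is the jump of $k[\gamma_0]$ across the parameter point $a$, not the matching $k(a^+)=k(b^-)$ needed for the sub-arc itself to close smoothly. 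A direct prototype-by-prototype argument for $C^1$-closed embedded elastica arcs may be possible but is nontrivial, and you do not supply it. Step (i) is also shaky as written: your graph expansion $\tfrac12 k(a)\tau^2+o(\tau^2)$ presupposes each branch is $C^2$ at $p$, but the branch through $a$ is built from pieces of the two sub-arcs and need not be $C^2$ there a priori.

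More importantly, the entire detour is unnecessary. The paper dispatches this step in one line of topology: if $T(a)=T(b)$, then each sub-arc is a $C^1$-closed \emph{embedded} planar curve of class $H^2$, so Hopf's Umlaufsatz gives $\tfrac{1}{2\pi}\int_{\gamma_0|_{[a,b]}}k\,\mathrm{d}s=\pm1$ and likewise for the other arc, whence $N[\gamma_0]\in\{0,2\}$, contradicting $N[\gamma_0]=1$. No elastica classification and no regularity beyond $H^2$ is needed. Even within your own framework, once both sub-arcs are circles the same additivity already forces $N\in\{0,2\}$ whether or not the two circles coincide, so step (i) is redundant as well.
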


We {interpret} here $[a,b]$ in a standard way if $a < b$ in $[0,1)$ and otherwise {we consider} $[a,b+1]$, in accordance with the identification $\mathbb{T}^1 \simeq \mathbb{R}/ \mathbb{Z}$.

The above proposition characterizes the self-intersection properties and regularity of solutions of \eqref{eq:pertfunc} --- in an optimal way!
Indeed, we have already shown in Lemma \ref{lem:ruleoutelastica} that there must remain at least one exceptional point where the elastica equation is not solved.
    
We start with some preparations for the proof of Proposition \ref{prop:selfinters}. To this end, we first look at perturbations that do not affect the set of self-intersections.  

    \begin{lemma}\label{lem:local}
    Suppose that $\gamma_0 \in \mathcal{A}_0$ is a solution to \eqref{eq:pertfunc}, $x \in \mathbb{T}^1$ and $\gamma_0(x) \not \in S[\gamma_0]$. Then there exists an open neighborhood $U_x \subset \mathbb{T}^1$ of $x$ such that 
    \begin{equation}
        \mbox{$L[\gamma_0] DB[\gamma_0](\phi) + B[\gamma_0] DL[\gamma_0](\phi) = 0$ for all $\phi \in C_0^\infty(U_x; \mathbb{R}^2)$.}
    \end{equation}
    \end{lemma}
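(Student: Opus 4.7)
The plan is to exhibit perturbations supported in a small neighborhood of $x$ that remain in the admissible class $\mathcal{A}_0$ for both positive and negative perturbation parameters; the variational inequality \eqref{eq:pertfunc} then forces the one-sided bound to be an equality via \eqref{eq:varineqpm}.

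First I would fix the neighborhood. Since $\gamma_0\in\mathcal{A}_0$ fails to be injective, pick any self-intersection point $p\in S[\gamma_0]$ and two distinct preimages $a,b\in\gamma_0^{-1}(\{p\})$. By assumption $\gamma_0(x)\neq p$, so $x\notin\{a,b\}$, and one can choose an open neighborhood $U_x\subset\mathbb{T}^1$ of $x$ with $\overline{U_x}\cap\{a,b\}=\varnothing$. Now let $\phi\in C_0^\infty(U_x;\mathbb{R}^2)$ and consider $\gamma_\varepsilon:=\gamma_0+\varepsilon\phi$ for $\varepsilon\in(-\varepsilon_0,\varepsilon_0)$.

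Next I would verify $\gamma_\varepsilon\in\mathcal{A}_0$ for sufficiently small $|\varepsilon|$, which amounts to three checks:
\begin{enumerate}[label={\upshape(\roman*)}]
 \item \emph{Immersion.} Since $\gamma_0$ is an immersion on the compact set $\mathbb{T}^1$, $\inf_{\mathbb{T}^1}|\gamma_0'|>0$, and because $\|\varepsilon\phi'\|_{L^\infty}\to 0$ as $\varepsilon\to 0$, $\gamma_\varepsilon\in H^2_{imm}(\mathbb{T}^1;\mathbb{R}^2)$ for $|\varepsilon|$ small.
 \item \emph{Non-injectivity.} Because $\phi$ vanishes on a neighborhood of $a$ and $b$, $\gamma_\varepsilon(a)=\gamma_0(a)=\gamma_0(b)=\gamma_\varepsilon(b)$, so $\gamma_\varepsilon$ remains non-injective.
 \item \emph{Rotation number.} By the weak $H^2$-continuity of $N[\cdot]$ on $H^2_{imm}(\mathbb{T}^1;\mathbb{R}^2)$ (\cite[Lemma 4.9]{ToriofRev}, already invoked in Proposition~\ref{prop:admissibleset}) and the fact that $N$ is integer valued, $N[\gamma_\varepsilon]=N[\gamma_0]=1$ for $|\varepsilon|$ small.
\end{enumerate}

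Thus $\varepsilon\mapsto\gamma_\varepsilon$ defines a $C^1$-curve in $\mathcal{A}_0$ on a symmetric interval $(-\varepsilon_0,\varepsilon_0)$ with $\gamma_0$ at $\varepsilon=0$. By the Frechet differentiability of $L$ and $B$ on $H^2_{imm}$ and the product rule we get
\begin{equation}
    \frac{\mathrm{d}}{\mathrm{d}\varepsilon}\bigg|_{\varepsilon=0}\bar{B}[\gamma_\varepsilon]=L[\gamma_0]\,DB[\gamma_0](\phi)+B[\gamma_0]\,DL[\gamma_0](\phi).
\end{equation}
Applying \eqref{eq:pertfunc} to the two admissible one-sided perturbations $\varepsilon\mapsto\gamma_0+\varepsilon\phi$ and $\varepsilon\mapsto\gamma_0-\varepsilon\phi$ (equivalently, invoking \eqref{eq:varineqpm}) gives $\frac{\mathrm{d}}{\mathrm{d}\varepsilon}\big|_0\bar{B}[\gamma_\varepsilon]=0$, hence the claimed identity. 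Since $\phi\in C_0^\infty(U_x;\mathbb{R}^2)$ was arbitrary the proof is complete.

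The only non-routine point is the preservation of non-injectivity under the perturbation; this is handled purely by localizing $\phi$ away from the fixed preimages $a,b$, which is possible precisely because $\gamma_0(x)\notin S[\gamma_0]$. The other two admissibility checks are standard soft arguments.
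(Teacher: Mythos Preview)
Your argument is correct and follows essentially the same route as the paper: localize the perturbation away from a fixed self-intersection so that $\gamma_0+\varepsilon\phi\in\mathcal{A}_0$ for both signs of $\varepsilon$, and then apply \eqref{eq:varineqpm}. The only cosmetic difference is that the paper takes $U_x$ disjoint from the full preimage $\gamma_0^{-1}(S[\gamma_0])$, while you keep $U_x$ away only from the two chosen preimages $a,b$; your weaker localization is already sufficient since preserving a single self-intersection is all that is needed.
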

    \begin{proof}
    The proof follows the lines of \cite[Lemma 5.7]{LiYau1}, with the tiny additional difficulty that the rotation number needs to be discussed. Since $\gamma_0^{-1}(S[\gamma_0])$ is closed, there exists $U_x \subset \mathbb{T}^1$, an open neighborhood of $x$, such that for all $\phi \in C_0^\infty(U_x)$ and $\varepsilon \in \mathbb{R}$ the perturbed curve
    $\gamma_0 + \varepsilon \phi$ has a self-intersection.
    The fact that $N[\cdot]$ is integer-valued and $H^2_{imm}$-continuous implies also that $N[\gamma_0+ \varepsilon \phi] =1$ for $|\varepsilon|$ suitably small and fixed $\phi \in C_0^\infty(U_x)$.
    In particular $\gamma_0 + \varepsilon \phi \in \mathcal{A}_0$ for such $\varepsilon$ and $\phi$.
    By \eqref{eq:varineqpm} we conclude
        \begin{equation}
            0 = \frac{\mathrm{d}}{\mathrm{d}\varepsilon} \Big\vert_{\varepsilon = 0} L[\gamma_0 + \varepsilon \phi] B[\gamma_0+ \varepsilon \phi ] = L[\gamma_0] DB[\gamma_0](\phi) + B[\gamma_0] DL[\gamma_0](\phi). \qedhere
        \end{equation}
    \end{proof}
    
    This implies that the elastica equation is solved at each point that is not a point of self-intersection. 
    
    \begin{proof}[Proof of Proposition \ref{prop:selfinters}]
    Let $\gamma_0 \in \mathcal{A}_0$ be a solution to \eqref{eq:pertfunc}. The proof is divided {into} several steps.
    
    \textbf{Step 1:} We show $S[\gamma_0] = \{p \}$ for some $p \in \mathbb{R}^2$.
    To prove this we follow the lines of \cite[Lemma 5.8]{LiYau1}.
      Assume that there exist two distinct points $p, q \in S[\gamma_0]$.  Fix $x \in \mathbb{T}^1$. Then either $\gamma_0(x) \neq p$ or $\gamma_0(x) \neq q$. Without loss of generality we may assume that $\gamma_0(x) \neq p$. Since $\gamma_0^{-1}(\{ p \}) \subsetneq \mathbb{T}^1$ is closed one can find an open neighborhood $U_x$ of $x$ such that $U_x \cap \gamma_0^{-1}(\{ p \}) = \emptyset$. One readily checks that for each $\phi \in C_0^\infty(U_x; \mathbb{R}^2)$ there holds $\gamma_0 + \varepsilon \phi \in \mathcal{A}_0$ for $|\varepsilon|$ suitably small (since $p \in S[\gamma_0 + \varepsilon \phi]$ and $N[\gamma_0 + \varepsilon \phi] = 1$ for $|\varepsilon| \ll 1$). 
      With this in hand
      we compute by \eqref{eq:varineqpm} that for all $\phi \in C_0^\infty(U_x;\mathbb{R}^2)$ there holds
      \begin{equation}
          0 = \frac{\mathrm{d}}{\mathrm{d}\varepsilon} \Big\vert_{\varepsilon = 0} L[\gamma_0 + \varepsilon \phi] B[\gamma_0+ \varepsilon \phi ] = L[\gamma_0] DB[\gamma_0](\phi) + B[\gamma_0] DL[\gamma_0](\phi).
      \end{equation}
      Since $x \in \mathbb{T}^1$ was arbitrary one concludes by Lemma \ref{lem:localize} that $\gamma_0 \in \mathcal{A}_0$ is smooth and solves the elastica equation. 
      This is a contradiction to Lemma \ref{lem:ruleoutelastica}.
      
      \textbf{Step 2:} We show $\mathrm{mult}[\gamma_0](p)=2$ for the unique point $p \in S[\gamma]$. 
      To show this we assume $\mathrm{mult}[\gamma_0](p)\geq 3$. Then each $x \in \mathbb{T}^1$ has an open neighborhood $U_x$ that satisfies condition (iii) of Lemma \ref{lem:localize}, since $U_x$ can be taken so small that $\gamma^{-1}(p)\setminus U_x$ contains at least two points (cf.\ \cite[Lemma 5.9]{LiYau1}).
      Thereupon, Lemma \ref{lem:localize} yields that $\gamma_0$ is smooth and solves the elastica equation. This is again a contradiction to Lemma \ref{lem:ruleoutelastica}.
      
      \textbf{Step 3:} We show $S[\gamma_0] = S_{\tan}[\gamma_0]$.
      If we assume that the unique self-intersection point $p \in \mathbb{R}^2$ is non-tangential, any small perturbation keeps the self-intersection so that $\gamma_0$ solves the elastica equation (
      see also \cite[Lemma 5.12]{LiYau1}).
      This is again a contradiction.
      We have shown that $S[\gamma_0] = S_{\tan}[\gamma_0] = \{ p \}$ for a singleton $p \in \mathbb{R}^2$ with $\gamma_0^{-1}(\{p\}) = \{ a,b \}$ for two distinct values $a,b \in \mathbb{T}^1$.
      
      \textbf{Step 4:} We show that the curves $\gamma_0\vert_{[a,b]}$ and $\gamma_0\vert_{[b,a]}$ are smooth elasticae (which are trivially injective except at their endpoints).
      Indeed, since $\gamma_0(x) \not \in S[\gamma_0]$ for all $x \in (a,b)$ and all $x \in (b,a+1)$ one infers from Lemma \ref{lem:local} that point (iii)  of Lemma \ref{lem:localize} holds true on $[a,b]$ and $[b,a+1]$. Using Lemma \ref{lem:localize} we obtain the claim.
      
    \textbf{Step 5}: We show $T_{\gamma_0}(a) = - T_{\gamma_0}(b).$ By Step 3 one already has $T_{\gamma_0}(a) = \pm T_{\gamma_0}(b)$.
    Assume that ``$\pm=+$''. Choose a reparametrization of $\gamma_0$ with constant speed, which we call again $\gamma_0$ by abuse of notation.  One readily checks (cf.\  \cite[Lemma A.6]{LiYau1}) that $\gamma_{0} \in \mathcal{A}_0$.
    Moreover, we infer from our assumption that 
    $\gamma_{0}(a) = \gamma_{0}(b)$ and  $ \gamma_{0}'(a) = \gamma_{0}'(b).$
    In particular $\gamma_{01} := \gamma_{0}\vert_{[a,b]}$ and $\gamma_{02} := \gamma_{0}\vert_{[b,a]}$ are two $C^1$-closed curves. Notice that suitable reparametrizations of  both such curves lie in $H^2_{imm}(\mathbb{T}^1, \mathbb{R}^2)$.
    Since $\gamma_{0}$ may not have self-intersections except for $\gamma_{0}(a) = \gamma_{0}(b) = p$ we obtain that  $\gamma_{01}$ and $\gamma_{02}$ are closed embedded curves. 
    By Hopf's Umlaufsatz (
   see also \cite[Lemma A.5]{LiYau1}) one infers that
    $N[\gamma_{0i}] =  1$, that is, $\frac{1}{2\pi}\int_{\gamma_{0i}} k \; \mathrm{d}s\in\{\pm1\}$ for $i=1,2$, and hence $N [\gamma_{0}]=|\frac{1}{2\pi} \int_{\gamma_0}  k \; \mathrm{d}s| = |\frac{1}{2\pi} \int_{\gamma_{01}}  k \; \mathrm{d}s + \frac{1}{2\pi}\int_{\gamma_{02}} k \; \mathrm{d}s| \in\{0,2\}$.
    This is a contradiction to $N[\gamma_0] = 1$ as $\gamma_0 \in \mathcal{A}_0$.
    \end{proof}
    
    An important consequence of Proposition \ref{prop:selfinters} is that each solution of \eqref{eq:pertfunc} $\gamma_0 \in \mathcal{A}_0$ is composed of two \emph{embedded cuspidal elasticae}, defined as follows.

\begin{definition}[Embedded cuspidal elastica: ECE]\label{def:selftouchdrop}
We call a smooth curve $\gamma  \in C^\infty_{imm}([a,b]; \mathbb{R}^2)$ an \emph{embedded cuspidal elastica (for short: ECE)} if $\gamma$ is an elastica such that $\gamma\vert_{[a,b)}$ is injective, $\gamma(a) = \gamma(b)$, and $T_\gamma(a) = -T_\gamma(b)$.
\end{definition}

The ECE property already gives a pretty explicit characterization of the solutions to the variational inequality --- we will be able to classify all ECEs. This will reduce the amount of candidates for solutions dramatically. In order to characterize solutions of \eqref{eq:pertfunc} exhaustively, we need to understand more about the regularity at the unique self-intersection point $p = \gamma(a) = \gamma(b)$ determined in Proposition \ref{prop:selfinters}. We will derive an optimal global regularity statement that can be understood as a coupling condition. 
\begin{lemma}[Global regularity, 
see also Appendix \ref{app:optireg}]\label{lem:optireg}
    Each solution $\gamma \in \mathcal{A}_0$ of the variational inequality \eqref{eq:pertfunc} has a 
    reparametrization (of constant speed)
    that lies in $W^{3,\infty}( \mathbb{T}^1; \mathbb{R}^2)$. In particular, $k[\gamma] \in C^0( \mathbb{T}^1; \mathbb{R}).$
\end{lemma}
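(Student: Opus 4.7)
The plan is to derive an Euler--Lagrange equation with a Lagrange multiplier concentrated at the two self-intersection parameters, and then decode the regularity this forces. By Proposition~\ref{prop:selfinters}, $\gamma$ admits a unique self-intersection $\gamma(a) = \gamma(b) = p$ with $T_\gamma(a) = -T_\gamma(b)$, and each of the two arcs $\gamma|_{[a,b]}$, $\gamma|_{[b,a]}$ is a smooth elastica with the same multiplier $\lambda = B[\gamma]/L[\gamma]$. After passing to a constant-speed reparametrization (still denoted $\gamma$), smoothness on each arc reduces the task to showing $\gamma'' \in C^0(\mathbb{T}^1;\mathbb{R}^2)$ and $\gamma''' \in L^\infty(\mathbb{T}^1;\mathbb{R}^2)$ across the joining parameters.

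The first step is to produce a rich family of \emph{two-sided} admissible perturbations. For every $\phi \in C^\infty(\mathbb{T}^1;\mathbb{R}^2)$ with $\phi(a) = \phi(b)$, the perturbation $\gamma_\varepsilon := \gamma + \varepsilon \phi$ automatically satisfies $\gamma_\varepsilon(a) = \gamma_\varepsilon(b)$, and for $|\varepsilon|$ small it also remains immersed with rotation number $1$ (both being open $H^2_{imm}$-conditions), so $\gamma_\varepsilon \in \mathcal{A}_0$ in a symmetric neighborhood of $\varepsilon = 0$. The equality \eqref{eq:varineqpm} then yields
\begin{equation*}
  L[\gamma]\, DB[\gamma](\phi) + B[\gamma]\, DL[\gamma](\phi) = 0 \quad \text{whenever } \phi(a) = \phi(b).
\end{equation*}
Since this continuous linear functional vanishes on the codimension-$2$ constraint space $\{\phi(a) - \phi(b) = 0\}$, a standard Lagrange multiplier argument furnishes $\mu \in \mathbb{R}^2$ such that
\begin{equation*}
  L[\gamma]\, DB[\gamma](\phi) + B[\gamma]\, DL[\gamma](\phi) = \mu \cdot \big(\phi(a) - \phi(b)\big) \quad \text{for all } \phi \in C^\infty(\mathbb{T}^1;\mathbb{R}^2).
\end{equation*}

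To decode this identity, the plan is to integrate by parts on each smooth arc, using that the bulk Euler--Lagrange term vanishes there, which leaves only boundary contributions at $a$ and $b$ of schematic form $\alpha \cdot \phi + \beta \cdot \phi'$, where $\alpha$ is linear in $\nabla_s \kappa$ and $\beta$ is linear in $\kappa$. Matching the coefficients of $\phi'$ on the two sides --- the right-hand side contains no $\phi'$ term --- forces the jump $[\kappa] = 0$ at both $a$ and $b$, i.e.\ continuity of the curvature. Matching the coefficients of $\phi$ then pins down a finite jump of $\nabla_s \kappa$ proportional to $\pm\mu$. In constant-speed parametrization these translate to $\gamma'' \in C^0(\mathbb{T}^1;\mathbb{R}^2)$ and $\gamma''' \in L^\infty(\mathbb{T}^1;\mathbb{R}^2)$, giving $\gamma \in W^{3,\infty}(\mathbb{T}^1;\mathbb{R}^2)$ and in particular $k[\gamma] \in C^0(\mathbb{T}^1;\mathbb{R})$.

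The main technical obstacle will be the boundary-term bookkeeping itself: since $\phi$ is vector-valued, tangential and normal components of each boundary contribution must be separated, and the reversed-tangent relation $T_\gamma(a) = -T_\gamma(b)$ demands care with sign conventions across the two arcs meeting at $p$. These are routine but delicate computations, which is presumably why they are relegated to Appendix~\ref{app:optireg}.
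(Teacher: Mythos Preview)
Your approach is correct but genuinely different from the paper's. The paper (in Appendix~\ref{app:optireg}) does \emph{not} use the two-point constraint $\phi(a)=\phi(b)$ and a Lagrange multiplier. Instead it passes to a local graph representation $u_a,u_b$ of the two branches near the self-intersection, observes that \emph{one-sided} perturbations $u_a\mapsto u_a+t\phi$ with $\phi\geq 0$ always preserve a crossing (intermediate value theorem), and applies the Riesz--Markov--Kakutani theorem to represent $D\bar B[\gamma]$ locally by a nonnegative Radon measure. Testing against $\phi$ supported away from $0$ shows this measure is a multiple of $\delta_0$, and then a direct PDE bootstrap on the graph equation upgrades $u_a$ from $W^{2,2}$ to $W^{3,\infty}$. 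This follows the obstacle-problem template of Dall'Acqua--Deckelnick.

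Your route is shorter precisely because you exploit more of what Proposition~\ref{prop:selfinters} already gives: each arc is a smooth elastica up to and including the endpoints, so integration by parts with boundary terms is available, and the whole question collapses to matching $\kappa$ and $\nabla_s\kappa$ at $a$ and $b$. The Lagrange-multiplier identity (the first variation vanishes on the closed codimension-two subspace $\{\phi(a)=\phi(b)\}$, hence equals $\mu\cdot(\phi(a)-\phi(b))$) is the clean structural statement that replaces the paper's measure-theoretic step. What the paper's approach buys is robustness: it is a bootstrap from the raw $H^2$-regularity that does not presuppose smoothness of the arcs, and it transfers directly to settings (e.g.\ genuine obstacle problems) where no analogue of your two-sided point-constraint perturbations is available.
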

\begin{proof}[Sketch of Proof]
 The $W^{3,\infty}$-regularity follows essentially by the same principle as Dall'Acqua--Deckelnick's proof for an obstacle problem \cite[Theorem 5.1]{AnnaObst}, which obtains regularity from \emph{one-sided perturbations}. 
 In fact, around the unique tangential self-intersection, the curve is represented by two graphs, and each of them allows one-sided perturbations, in the direction that maintains self-intersections. 
 A crucial implication of this is that $D\bar{B}$ can locally be represented by a Radon measure.
 If this Radon measure is finite, standard techniques yield the desired regularity.
 In \cite[Theorem 5.1]{AnnaObst} this finiteness follows from an obstacle condition, while in our situation it does from the self-intersection properties, see Appendix \ref{app:optireg} for details.
\end{proof}
We have obtained an additional coupling condition at the self-intersection point of a solution of \eqref{eq:pertfunc}. All in all, each solution consists of two ECEs whose curvatures match up at the endpoints.

\subsection{Classification of ECEs}\label{sec:ECE}

Our goal in the next section is to characterize all ECEs. 
The main tool we will use is the explicit parametrization of planar elasticae, given in Proposition \ref{prop:elaclassi}. 

Before we start with our search for ECEs, we can rule out the prototypes (i), (iii), and (v) and all their rescalings, reparametrizations and isometric images: The linear case (i) and  the circular case (v) are obvious, while the borderline case (iii) can also be ruled out immediately by the fact that the \emph{tangential angle} $\theta := \mathrm{arg}( T_\gamma)\in [0,2\pi]$ is strictly increasing between $0$ and $2\pi$, 
see \cite[Eq. (3.6)]{TatsuyaPhase}. 
Indeed, if the borderline elastica $\gamma$ had a self-intersection with antipodal tangents at $p$ and $\gamma^{-1}( \{p \}) = \{ a,b \}$, then $\theta(b)- \theta(a) = \pm \pi$, implying that $\gamma\vert_{[a,b]}$ can be represented (after rotation) as a graph of a convex function. But this contradicts the assumption that $\gamma(a) = \gamma(b)$.

We now examine the wavelike case and the orbitlike case in Sections \ref{sec:wavECE} and \ref{sec:orbECE}, respectively.

\subsubsection{Wavelike ECEs}\label{sec:wavECE}

We prove in this section that there exists (up to scaling, reparametrization and isometries of $\mathbb{R}^2$) only one wavelike ECE --- the teardrop elastica, cf.\ Figure \ref{img:teardrop}.

By Proposition \ref{prop:elaclassi} the modulus $m \in (0,1)$ characterizes a wavelike elastica uniquely up to scaling, reparametrization and isometries of $\mathbb{R}^2$. We will show that only one modulus $m= m_T$ leads to an ECE. For notational simplicity we define 
\begin{equation}\label{eq:def alpha}
    \alpha(m):= \arcsin\sqrt{\frac{1}{2m}}\in (0,\tfrac{\pi}{2}] \quad (m\geq \tfrac{1}{2}).
\end{equation}

The modulus $m_T$ is characterized as the unique root of 
\begin{equation}\label{eq:fwavelike}
    f: \big[\tfrac{1}{2},1\big) \rightarrow \mathbb{R} , \quad f(m) := \int_0^{\pi- {\alpha(m)} } \frac{1- 2m \sin^2\theta}{\sqrt{1- m \sin^2 \theta}} \; \mathrm{d} \theta.
\end{equation}
Existence and uniqueness of $m_T$ follow from

\begin{proposition}[Proof in Appendix \ref{app:compu}]\label{prop:monotonequantity}
For all $m \in (\frac{1}{2},1)$ one has $f'(m) < 0$. Moreover, 
  $  f(\frac{1}{2}) > 0$ and $ f(m_8) < 0$, where $m_8 \in (0,1) $  is the 
  root of $m \mapsto 2 E(m)- K(m)$ (
  which exists and is unique due to \cite[Lemma B.4]{LiYau1}). In particular there exists a unique $m_T \in ( 0,1) $ such that $f(m_T) = 0$. Moreover, $m_T \in ( \frac{1}{2}, m_8) $.
\end{proposition}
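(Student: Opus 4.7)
\textbf{Plan for proving Proposition \ref{prop:monotonequantity}.}

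The approach is to rewrite $f$ in terms of standard elliptic integrals and then exploit the crucial fact that $\alpha(m)$ is defined exactly so that the numerator $1-2m\sin^2\theta$ vanishes at $\theta=\pi-\alpha(m)$, which will make the Leibniz boundary terms disappear. Concretely, using the decomposition
\begin{equation}
\frac{1-2m\sin^2\theta}{\sqrt{1-m\sin^2\theta}} = 2\sqrt{1-m\sin^2\theta}-\frac{1}{\sqrt{1-m\sin^2\theta}},
\end{equation}
I would first write $f(m)=2E(\pi-\alpha(m),m)-F(\pi-\alpha(m),m)$.

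For the monotonicity, the key observation is that $\sin^2\alpha(m)=\tfrac{1}{2m}$ forces the integrand $g(m,\theta):=\frac{1-2m\sin^2\theta}{\sqrt{1-m\sin^2\theta}}$ to vanish at $\theta=\pi-\alpha(m)$. Consequently, Leibniz' rule gives
\begin{equation}
f'(m)=\int_0^{\pi-\alpha(m)} \partial_m g(m,\theta)\, \mathrm{d}\theta = -\int_0^{\pi-\alpha(m)} \sin^2\theta \left[\frac{1}{\sqrt{1-m\sin^2\theta}}+\frac{1}{2(1-m\sin^2\theta)^{3/2}}\right]\mathrm{d}\theta,
\end{equation}
which is strictly negative for $m\in(\tfrac12,1)$.

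For the boundary values: At $m=\tfrac12$ one has $\alpha(\tfrac12)=\tfrac{\pi}{2}$, hence $f(\tfrac12)=2E(\tfrac12)-K(\tfrac12)$. Since $2E-K$ is strictly decreasing in $m$ with value $\tfrac{\pi}{2}>0$ at $m=0$ and (by definition of $m_8$) zero at $m=m_8>\tfrac12$, we get $f(\tfrac12)>0$. For $f(m_8)$ I would use the symmetry $\sin^2(\pi-\theta)=\sin^2\theta$ to split $\int_0^{\pi-\alpha}=2\int_0^{\pi/2}-\int_0^\alpha$, obtaining
\begin{equation}
f(m_8)=2\bigl[2E(m_8)-K(m_8)\bigr]-\bigl[2E(\alpha(m_8),m_8)-F(\alpha(m_8),m_8)\bigr].
\end{equation}
The first bracket vanishes by definition of $m_8$. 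The second bracket is positive because $\tfrac{\mathrm{d}}{\mathrm{d}\phi}[2E(\phi,m)-F(\phi,m)]=2\sqrt{1-m\sin^2\phi}-(1-m\sin^2\phi)^{-1/2}>0$ precisely when $\sin^2\phi<\tfrac{1}{2m}$, i.e.\ $\phi<\alpha(m)$; since the expression vanishes at $\phi=0$, it is strictly positive at $\phi=\alpha(m_8)$. Hence $f(m_8)<0$.

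Combining the three pieces, strict monotonicity of $f$ and the sign change between $\tfrac12$ and $m_8$ yield, via the intermediate value theorem, a unique $m_T\in(\tfrac12,m_8)$ with $f(m_T)=0$. The only real obstacle is the algebraic bookkeeping in the symmetry step producing the cancellation with $2E(m_8)-K(m_8)=0$; once that is set up, the monotonicity lemma for $2E(\phi,m)-F(\phi,m)$ on $(0,\alpha(m))$ closes the argument cleanly.
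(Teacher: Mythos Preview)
Your proof is correct and follows essentially the same route as the paper: both compute $f'(m)$ via Leibniz' rule (using that the integrand vanishes at the upper limit), verify $f(\tfrac12)>0$ directly, and show $f(m_8)<0$ by splitting off the piece $2E(m_8)-K(m_8)=0$. Your form of $\partial_m g$ makes the negativity manifest without the paper's auxiliary estimate $\sin^4\theta\le\sin^2\theta$, and your splitting $\int_0^{\pi-\alpha}=2\int_0^{\pi/2}-\int_0^{\alpha}$ is the symmetric twin of the paper's $\int_0^{\pi/2}+\int_{\pi/2}^{\pi-\alpha}$; these are cosmetic variations of the same argument.
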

 The numerical value of $m_T$ is $m_T \simeq 0.7312$, cf. Table \ref{tab1}.
 
In this section we will often fix a parametrization of wavelike elasticae that differs from the one in Proposition \ref{prop:elaclassi}. Namely, we define
\begin{equation}\label{eq:wavelikeparabulg}
    \gamma(x | m) := \begin{pmatrix} 2E(x,m) - F(x,m) \\ -2\sqrt{m} \cos(x)   \end{pmatrix}  \quad (x \in \mathbb{R}).
\end{equation}
for some fixed $m \in (0,1)$. Notice that $s \mapsto \gamma(\mathrm{am}(s,m) | m)$ exactly yields the prototypical wavelike elastica in Proposition \ref{prop:elaclassi}. In this way $\gamma(\cdot | m)$ enjoys `(anti)periodic behavior', i.e.\ for any $m \in (0,1)$ and $x \in \mathbb{R}$,
\begin{equation}\label{eq:antiperiod}
\gamma(x + \pi  | m ) = \begin{pmatrix} \gamma^{(1)}(x | m) \\ -\gamma^{(2)}(x | m)  \end{pmatrix} + \begin{pmatrix} 2(2E(m)- K(m)) \\ 0  \end{pmatrix},
\end{equation}
and hence also
\begin{equation}\label{eq:gammaperiod}
    \gamma(x + 2\pi  | m ) = \gamma(x | m) + \begin{pmatrix} 4(2E(m)- K(m)) \\ 0  \end{pmatrix}. 
\end{equation}
The main advantage of our chosen parametrization is now that the period does not depend on the modulus $m$. 

For the proofs to come it is convenient to define for $x \in \mathbb{R}$ and $m \in (0,1)$,
\begin{align}
\begin{split}
    G(x,m) := \gamma^{(1)}(x | m) & = 2 E(x,m) - F(x,m) \\
   &=  \int_0^x \frac{1-2m \sin^2 \theta}{\sqrt{1-m \sin^2\theta}} \; \mathrm{d}\theta \label{eq:2.7}.
\end{split}
\end{align}

In the sequel we will use many properties of $G$, summarized in the following 

\begin{lemma}
\label{lem:B2}
For all $m \in (0,1)$, $l\in \mathbb{Z}$, and $x \in \mathbb{R}$ there holds
    \begin{enumerate}[label={\upshape(\roman*)}]
        \item $G(-x,m) = -G(x,m)$; 
        \item $G(x+l\pi,m) = G(x,m) + 2l (2E(m) - K(m)) = G(x,m)+ G(l\pi, m)$; 
        \item $G( \frac{\pi}{2},m) =  2 E(m) - K(m)$;
        \item if $m <m_T$, then $G(x,m) = 0$ implies $x= 0$; 
        \item the equation $G(x,m_T) = 0$ has exactly three solutions: $x= 0$ and \\ $x = \pm \left( \pi - {\alpha(m)} \right)$.
    \end{enumerate}
\end{lemma}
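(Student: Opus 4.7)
The plan is to treat (i)--(iii) by direct inspection of the integral representation in \eqref{eq:2.7}, and then deduce (iv) and (v) from a careful sign analysis of the integrand, combined with Proposition \ref{prop:monotonequantity}.

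First I would observe that the integrand
\[
g(\theta,m):= \frac{1-2m\sin^2\theta}{\sqrt{1-m\sin^2\theta}}
\]
is even in $\theta$ and $\pi$-periodic in $\theta$. Property (i) is then immediate from $G(-x,m)=\int_0^{-x} g(\theta,m)\,\mathrm{d}\theta=-\int_0^x g(\theta,m)\,\mathrm{d}\theta$. For (iii), the equality $G(\pi/2,m)=2E(m)-K(m)$ is exactly the definition of the complete elliptic integrals. For (ii), by $\pi$-periodicity of $g$ and symmetry of $\sin^2$ around $\pi/2$,
\[
G(x+\pi,m)-G(x,m)=\int_0^\pi g(\theta,m)\,\mathrm{d}\theta=2\int_0^{\pi/2} g(\theta,m)\,\mathrm{d}\theta=2(2E(m)-K(m));
\]
iterating gives both halves of (ii), and also shows $G(l\pi,m)=2l(2E(m)-K(m))$.

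The heart of the proof is the sign analysis for (iv) and (v). If $m<\tfrac12$, then $g(\cdot,m)>0$ everywhere, so $G(\cdot,m)$ is strictly increasing and vanishes only at $0$. If $m\in[\tfrac12,1)$, then with $\alpha=\alpha(m)\in(0,\tfrac{\pi}{2}]$ defined by \eqref{eq:def alpha}, the sign of $g(\theta,m)$ flips precisely at $\theta=\alpha$ and $\theta=\pi-\alpha$ on $[0,\pi]$, so on this interval $G(\cdot,m)$ increases on $[0,\alpha]$, decreases on $[\alpha,\pi-\alpha]$, and increases on $[\pi-\alpha,\pi]$. The key identification is that the local minimum value on this interval equals
\[
G(\pi-\alpha(m),m)=\int_0^{\pi-\alpha(m)}g(\theta,m)\,\mathrm{d}\theta=f(m),
\]
i.e.\ exactly the function $f$ from \eqref{eq:fwavelike}.

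With this in hand I would conclude as follows. If $m\in[\tfrac12,m_T)$, Proposition \ref{prop:monotonequantity} gives $f(m)>0$, so $G(\cdot,m)>0$ on $(0,\pi]$; since $m_T<m_8$ forces $2E(m)-K(m)>0$, the quasi-periodicity (ii) propagates strict positivity to all $x>0$, and (i) handles $x<0$, proving (iv). For (v), at $m=m_T$ we have $f(m_T)=0$, so $G(\cdot,m_T)$ vanishes on $(0,\pi]$ exactly at the single point $x=\pi-\alpha(m_T)$ (strict monotonicity of $G(\cdot,m_T)$ on each of the three subintervals rules out any other zeros there), while $2E(m_T)-K(m_T)>0$ together with (ii) forbids further zeros in $(\pi,\infty)$; combined with (i) and the zero at $0$, this yields exactly the three zeros claimed.

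The only non-routine input here is the monotone and sign information of $f$ supplied by Proposition \ref{prop:monotonequantity} (proved in Appendix \ref{app:compu}) and the inequality $2E(m)-K(m)>0$ for $m<m_8$, which follows from $m_8$ being the unique root on $(0,1)$ and the value at $m=0$; everything else is bookkeeping on the sign of $g$. The mildest technical point will be verifying that $G(\cdot,m_T)$ has no zero in $(\pi-\alpha(m_T),\pi)$, which is immediate because $g(\cdot,m_T)>0$ there and $G(\pi-\alpha(m_T),m_T)=0$, so $G$ is strictly positive on the rest of the period.
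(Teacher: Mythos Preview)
Your proposal is correct and follows essentially the same route as the paper: parts (i)--(iii) are handled by direct inspection (the paper cites the periodicity identities in Proposition \ref{prop:identities}, you unpack the integral directly), and for (iv)--(v) both arguments analyze the sign of $\partial_x G=g$ on a fundamental interval $[0,\pi]$, identify the local minimum value $G(\pi-\alpha(m),m)$ with $f(m)$, invoke Proposition \ref{prop:monotonequantity} for the sign of $f$, and then use the quasi-periodicity (ii) together with $2E(m)-K(m)>0$ for $m<m_8$ to extend to all of $\mathbb{R}$. There is no substantive difference.
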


\begin{proof}
Statements (i), (ii), (iii) are immediate {using Proposition \ref{prop:identities}}. 
We prove (iv) and (v), thus assuming $m\leq m_T$ throughout.
Clearly $G(0,m)=0$.
Since if $m<\frac{1}{2}$ ($<m_T$) $G(\cdot,m)$ is strictly increasing (see $\partial_xG$ below) and thus (iv) is trivial, we may hereafter assume that $m\geq\frac{1}{2}$.
In view of symmetry in  (i), it is sufficient to prove that  $m\in[\frac{1}{2},m_T)$ {implies} $G(x,m)>0$ for all $x>0$, while if $m=m_T$ then $\{x>0\mid G(x,m_T)=0\}=\{\pi-\alpha(m)\}$.
We compute
 \begin{align}
     \partial_x G(x,m) & = \frac{1-2m\sin^2(x)}{\sqrt{1-m \sin^2(x)}} 
     \\ &  \begin{cases}
       = 0 & x= k \pi \pm \alpha(m) \quad (k\in\mathbb{Z}),\\
       > 0 & x \in  (k \pi - \alpha(m) , k \pi +  \alpha(m)) \quad  (k \in \mathbb{Z}), \\
       <  0 &  x \in (k \pi + \alpha(m) , (k+1) \pi -\alpha(m)) \quad (k \in \mathbb{Z}).
     \end{cases}
 \end{align}
The following key behavior becomes visible: $G(\cdot,m)$ is strictly increasing on $(0,\alpha(m))$, decreasing on $(\alpha(m),\pi-\alpha(m))$, and again increasing on $(\pi-\alpha(m),\pi)$.
By Proposition \ref{prop:monotonequantity} we deduce that $G(\pi-\alpha(m),m)\geq0$ (since $m\leq m_T$) with equality if and only if $m=m_T$.
Hence $G(x,m)\geq\min\{G(0,m),G(\pi-\alpha(m),m)\}=0$ for all $x\in(0,\pi)$, and equality holds if and only if $m=m_T$ and $x=\pi-\alpha(m_T)$.
Now it is sufficient to show that $G(x,m)>0$ for all $x\geq\pi$.
Let $x\in[k\pi,(k+1)\pi]$ with a positive integer $k\geq1$.
By the above behavior of $G$ on $[0,\pi]$ it is clear that $G(\pi,m)>0$.
By property (ii) and by the fact that $G(\cdot,m)\geq0$ on $[0,\pi]$,
$$G(x,m)=G(x-k\pi,m)+2k(2E(m)-K(m))\geq2k(2E(m)-K(m)).$$
Then by the estimate $m\leq m_T<m_8$ in Proposition \ref{prop:monotonequantity}, and by the fact that {$2E(m)- K(m) > 0$ for all $m < m_8$ (cf. \cite[Proof of Lemma B.4]{LiYau1})},
we deduce that
$G(x,m)>0$ for any $x\geq\pi$.
The proof is now complete.
\end{proof}

We next define the teardrop elastica rigorously.

\begin{definition}[Teardrop elastica]\label{def:teardropelastica}
 Let $a_T := - \pi +{\alpha(m_T)}$ and $b_T := \pi - {\alpha(m_T)}$. Then $\gamma_T := \gamma(\cdot  | m_T) \vert_{(a_T,b_T)} \in C^\infty_{imm} ( [a_T, b_T]; \mathbb{R}^2) $ is called \emph{teardrop elastica}. We will also call rescalings, isometric images and reparametrizations teardrop elasticae. However we will use the notation $\gamma_T$ only for the curve defined above.  
\end{definition} 

\begin{proposition}[Existence of wavelike ECEs]\label{prop:2.16}
Each teardrop elastica is an ECE.
\end{proposition}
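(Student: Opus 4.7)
The plan is to verify the four defining properties of an ECE (Definition \ref{def:selftouchdrop}) directly from the parametrization in Definition \ref{def:teardropelastica}, using the properties of $G(\cdot,m)$ collected in Lemma \ref{lem:B2}. By invariance of the ECE condition under scaling, reparametrization, and isometries, it suffices to check the claim for $\gamma_T = \gamma(\cdot|m_T)|_{[a_T,b_T]}$.

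First I would observe that $\gamma(\cdot|m_T)$ is, after reparametrization $x=\am(s,m_T)$, exactly the wavelike prototype in Proposition \ref{prop:elaclassi}(ii), so it is smooth, immersed, and an elastica on all of $\mathbb{R}$; restricting to $[a_T,b_T]$ preserves these properties. This handles the smoothness and elastica requirements.

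Next I would check the endpoint conditions. Since $a_T = -b_T$, Lemma \ref{lem:B2}(i) gives $G(a_T,m_T) = -G(b_T,m_T)$, and Lemma \ref{lem:B2}(v) gives $G(b_T,m_T)=G(\pi-\alpha(m_T),m_T)=0$, so both first components vanish. For the second components, $\cos(a_T)=\cos(-\pi+\alpha(m_T))=\cos(\pi-\alpha(m_T))=\cos(b_T)$, hence $\gamma_T(a_T)=\gamma_T(b_T)$. For the tangent condition, compute
\begin{equation}
\gamma'(x|m_T) = \Bigl(\tfrac{1-2m_T\sin^2 x}{\sqrt{1-m_T\sin^2 x}},\ 2\sqrt{m_T}\sin x\Bigr).
\end{equation}
By definition \eqref{eq:def alpha} one has $\sin^2(\alpha(m_T))=\frac{1}{2m_T}$, so $\sin^2(a_T)=\sin^2(b_T)=\frac{1}{2m_T}$ and the first component of $\gamma'$ vanishes at both endpoints. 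Since $\sin(a_T)=-\sin(\alpha(m_T))$ and $\sin(b_T)=+\sin(\alpha(m_T))$, the second components are negatives of each other. Hence $\gamma_T'(a_T)=-\gamma_T'(b_T)$, which gives $T_{\gamma_T}(a_T)=-T_{\gamma_T}(b_T)$.

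The step I expect to require the most care is injectivity of $\gamma_T$ on $[a_T,b_T)$. Suppose $\gamma(x_1|m_T)=\gamma(x_2|m_T)$ with $a_T\leq x_1<x_2<b_T$. Equality of the second components forces $\cos(x_1)=\cos(x_2)$; since both $x_1,x_2$ lie in the open interval $(-\pi,\pi)$, the only possibility is $x_2=-x_1$ with $x_1<0<x_2$. Equality of the first components then reads $G(x_1,m_T)=G(-x_1,m_T)=-G(x_1,m_T)$ by Lemma \ref{lem:B2}(i), so $G(x_1,m_T)=0$. By Lemma \ref{lem:B2}(v) this forces $x_1\in\{0,\pm(\pi-\alpha(m_T))\}$, and since $x_1\in[a_T,0)=[-(\pi-\alpha(m_T)),0)$ only $x_1=a_T$ is possible, giving $x_2=b_T$, contradicting $x_2<b_T$. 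Hence $\gamma_T|_{[a_T,b_T)}$ is injective, completing the verification that $\gamma_T$ is an ECE.
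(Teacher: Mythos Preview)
Your proof is correct and follows essentially the same approach as the paper: verify $\gamma_T(a_T)=\gamma_T(b_T)$ and $T_{\gamma_T}(a_T)=-T_{\gamma_T}(b_T)$ via the explicit formulae and \eqref{eq:def alpha}, then prove injectivity on $[a_T,b_T)$ by reducing a hypothetical self-intersection to a nontrivial zero of $G(\cdot,m_T)$ in $(0,b_T)$ (equivalently in $[a_T,0)$) and invoking Lemma~\ref{lem:B2}(v). The only addition is your explicit remark that $\gamma(\cdot|m_T)$ is a reparametrization of the wavelike prototype and hence an elastica, which the paper leaves implicit.
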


\begin{proof}
It suffices to show that $\gamma_T$ is an ECE.
We first compute that 
$$\gamma(a_T | m_T) = \gamma(b_T | m_T).$$
Indeed, Lemma \ref{lem:B2} (v) and \eqref{eq:2.7} yield 
$\gamma^{(1)} (a_T | m_T) = - \gamma^{(1)}(b_T | m_T) = 0,$ 
while properties of $\cos$ yield that 
$\gamma^{(2)}(a_T | m_T) = \gamma^{(2)}(b_T | m_T).$
Next we look at $\gamma'(\cdot | m_T)$. Observe that by \eqref{eq:2.7}, the definition of $b_T$, {\eqref{eq:def alpha}} and $\sin^2(b_T) = \frac{1}{2m_T}$,
\begin{equation}
    (\gamma^{(1)})'(b_T |  m_T) = \tfrac{1 - 2m_T\sin^2(x)}{\sqrt{1-m_T \sin^2(x) }} \Big\vert_{x= b_T} = 0.
\end{equation}
Analogously, one shows $(\gamma^{(1)})'(a_T | m_T) =  0$. 
Now note that $(\gamma^{(2)})'(x | m_T) = 2 \sqrt{m_T} \sin(x)$ and hence $(\gamma^{(2)})'(a_T | m_T) = - (\gamma^{(2)})'(b_T | m_T)$. 
We thus find that $\gamma'(a_T | m_T) = - \gamma'(b_T | m_T)$ and hence 
$$T_{\gamma(\cdot | m_T)}(a_T)  = -T_{\gamma(\cdot | m_T)}(b_T).$$ 

Finally we show that $\gamma(\cdot | m_T)$ is embedded on $[a_T,b_T)$. To this end, assume that there exist $x_1,x_2 \in [a_T,b_T)$, $x_1 < x_2$, such that $\gamma(x_1 | m_T) = \gamma(x_2  | m_T)$.
By definition of $a_T, b_T$ one has $-\pi < x_1 < x_2 <\pi$. Since $\gamma^{(2)}(x_2 | m_T) = \gamma^{(2)}(x_1 | m_T)$, i.e.\ $\cos(x_1)=\cos(x_2)$, one has $x_2 = -x_1 > 0$.
Since $\gamma^{(1)}(x_1 | m_T) = \gamma^{(1)}(x_2 | m_T) = \gamma^{(1)}(-x_1 | m_T)$ and $\gamma^{(1)}(\cdot | m_T)$ is odd, we infer that $\gamma^{(1)}(x_1 | m_T) = \gamma^{(1)}(x_2 | m_T) = 0.$
Hence  $x_2 \in (0, b_T)$ satisfies $G(x_2,m_T) =0$.
By Lemma \ref{lem:B2} (v) however $G(\cdot,m_T)=0$ has no solution in $(0,b_T)$. 
This is a contradiction.
\end{proof}

The rest of this section is devoted to the proof of the following fact.

\begin{proposition}[Uniqueness of wavelike ECEs]\label{prop:wavelikeunique} Let $a<b$ and suppose that $\gamma  \in C^\infty_{imm}([a,b];\mathbb{R}^2)$ is a wavelike ECE. Then $\gamma$ is a teardrop elastica.
\end{proposition}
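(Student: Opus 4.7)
My plan is to use the explicit parametrization \eqref{eq:wavelikeparabulg} to translate the ECE conditions on $\gamma$ into algebraic constraints on the modulus $m$ and the endpoints $a,b$, and then exhaust all possibilities. By invariance under scaling, isometries and reparametrization, I may write $\gamma = \gamma(\cdot|m)\vert_{[a,b]}$ for some $m\in(0,1)$ and $a<b$. A direct calculation gives $\gamma'(x|m) = \bigl(\tfrac{1-2m\sin^2 x}{\sqrt{1-m\sin^2 x}},\,2\sqrt{m}\sin x\bigr)$ with speed $|\gamma'(x|m)|=(1-m\sin^2 x)^{-1/2}$, so that $T(x) = \bigl(1-2m\sin^2 x,\,2\sqrt{m}\sin x\sqrt{1-m\sin^2 x}\bigr)$. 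The conditions $\gamma(a)=\gamma(b)$ and $T(a)=-T(b)$ then unfold into $\cos a=\cos b$, $G(a,m)=G(b,m)$, and $\sin^2 a+\sin^2 b=1/m$ with $\sin a,\sin b$ of opposite signs. Since $a-b\in 2\pi\mathbb{Z}$ would force $\sin a=\sin b=0$ and violate $\sin^2 a+\sin^2 b=1/m$, we obtain $b=-a+2k\pi$ for some $k\in\mathbb{Z}$, so $\sin^2 a=1/(2m)$; in particular $m\geq\tfrac12$ and $a = k_1\pi + \epsilon\alpha(m)$, $b=k_2\pi-\epsilon\alpha(m)$ for integers $k_1,k_2$ with $k_1+k_2$ even, and some $\epsilon\in\{\pm 1\}$. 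Setting $j:=(k_2-k_1)/2\in\mathbb{Z}$, Lemma \ref{lem:B2}(i)(ii) reduces the remaining condition to
\begin{equation*}
G(\alpha(m),m) \;=\; 2\epsilon j\,c_m, \qquad c_m := 2E(m)-K(m).
\end{equation*}
The monotonicity picture in the proof of Lemma \ref{lem:B2} gives $G(\alpha(m),m)>0$ for all $m\in[\tfrac12,1)$, so $\epsilon j c_m>0$ and in particular $j\neq 0$. Combined with $b>a$, this leaves precisely the configurations $(\epsilon,j)=(+1,\,j\geq 1)$ with $m\in[\tfrac12,m_8)$, and $(\epsilon,j)=(-1,\,j\geq 1)$ with $m\in(m_8,1)$.

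When $(\epsilon,j)=(+1,1)$, the identity $G(\pi-\alpha(m),m)=2c_m-G(\alpha(m),m)$ from Lemma \ref{lem:B2}(i)(ii) rewrites the algebraic condition as $f(m):=G(\pi-\alpha(m),m)=0$, which by Proposition \ref{prop:monotonequantity} is uniquely solved by $m=m_T$. Using the anti-periodicity \eqref{eq:antiperiod}, the shift $x\mapsto x+\pi$ realizes an isometry on the image; iterating it I may assume $k_1=-1$, so that $[a,b]=[a_T,b_T]$ and hence $\gamma=\gamma_T$ up to the allowed invariances, as desired.

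The main obstacle will be ruling out the remaining configurations $(\epsilon,j)=(+1,\,j\geq 2)$ and $(\epsilon,j)=(-1,\,j\geq 1)$, which formally satisfy the algebraic conditions but should fail the injectivity clause of Definition \ref{def:selftouchdrop}. Here I plan to exploit Proposition \ref{prop:monotonequantity}: the equation $G(\alpha(m),m)=2\epsilon j c_m$ rewrites as $f(m)=2(1-\epsilon j)c_m$, which in both remaining cases yields $f(m)<0$ and, via the strict monotonicity $f'<0$ together with $f(m_T)=0$, forces $m>m_T$. Consequently $G(\pi-\alpha(m),m)<0<G(\alpha(m),m)$, and the intermediate value theorem furnishes some $x^\star\in(\alpha(m),\pi-\alpha(m))$ with $G(x^\star,m)=0$. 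Together with oddness of $G(\cdot,m)$ (Lemma \ref{lem:B2}(i)) and evenness of $\cos$, this produces the interior self-intersection $\gamma(x^\star|m)=\gamma(-x^\star|m)$ inside the window $(-\pi+\alpha(m),\pi-\alpha(m))$. Since in the excluded cases $b-a>2\pi$, a suitable choice of $k_1$ (realized by an isometric parameter shift by a multiple of $\pi$) places the whole window $(-\pi+\alpha(m),\pi-\alpha(m))$ inside $[a,b)$, so the pair $\{\pm x^\star\}\subset[a,b)$ contradicts the injectivity of $\gamma\vert_{[a,b)}$. Thus only the teardrop survives, completing the classification.
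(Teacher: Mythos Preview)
Your proof is correct. Both your argument and the paper's translate the ECE conditions into algebraic constraints via the explicit parametrization \eqref{eq:wavelikeparabulg} and invoke Proposition~\ref{prop:monotonequantity} as the key analytical input; the difference lies in the organization. The paper first establishes two preparatory lemmas (Lemma~\ref{lem:smallmodulus}, ruling out $m<m_T$ by absence of self-intersections, and Lemma~\ref{lem:largemodulus}, locating a self-intersection in $[0,2\pi]$ for $m>m_T$), then normalizes to $a\in[-\pi,0]$ and runs a case analysis on the four scalar equations coming from $\gamma(a)=\gamma(b)$ and $T_\gamma(a)=-T_\gamma(b)$. You instead enumerate all algebraic solutions of the endpoint and tangent conditions as a discrete family indexed by $(\epsilon,j)$, identify $(\epsilon,j)=(+1,1)$ as the teardrop via $f(m)=0$, and dispose of the remaining cases in one stroke by showing they force $f(m)=2(1-\epsilon j)c_m<0$, hence $m>m_T$; then $G(\pi-\alpha(m),m)<0<G(\alpha(m),m)$ produces an interior zero $x^\star$ of $G(\cdot,m)$ and the self-intersection $\gamma(x^\star|m)=\gamma(-x^\star|m)$ inside $[a,b)$ (after an isometric $\pi$-shift), contradicting embeddedness. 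Your route bypasses both preparatory lemmas and makes the role of Proposition~\ref{prop:monotonequantity} more transparent; the paper's route, in turn, isolates Lemmas~\ref{lem:smallmodulus} and~\ref{lem:largemodulus} as standalone facts about wavelike elasticae.
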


Before the proof we need some preparatory lemmas.

\begin{lemma}\label{lem:smallmodulus}
Let $m < m_T$. Then $\gamma(\cdot | m)$ given by \eqref{eq:wavelikeparabulg} does not have any self-intersection on $\mathbb{R}$.  
\end{lemma}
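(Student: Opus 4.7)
The plan is to argue by contradiction, reducing any hypothetical self-intersection to a zero of $G(\cdot,m)$ that contradicts Lemma \ref{lem:B2}(iv). Suppose there exist $x_1\ne x_2$ with $\gamma(x_1\,|\,m)=\gamma(x_2\,|\,m)$. Looking at the two components of \eqref{eq:wavelikeparabulg}, this is equivalent to the pair of scalar equations
\begin{equation}
    \cos(x_1)=\cos(x_2), \qquad G(x_1,m)=G(x_2,m),
\end{equation}
where $G$ is defined in \eqref{eq:2.7}. The first equation forces either $x_2=x_1+2k\pi$ for some $k\in\mathbb{Z}\setminus\{0\}$, or $x_1+x_2=2k\pi$ for some $k\in\mathbb{Z}$ (with $x_2\ne x_1$).

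In the first case, Lemma \ref{lem:B2}(ii) gives $G(x_2,m)=G(x_1,m)+4k(2E(m)-K(m))$. Since $m<m_T<m_8$ by Proposition \ref{prop:monotonequantity}, and since $2E(m)-K(m)>0$ for all $m<m_8$ (as noted in the proof of Lemma \ref{lem:B2}), the shift is nonzero, contradicting $G(x_1,m)=G(x_2,m)$.

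In the second case, I would combine Lemma \ref{lem:B2}(i) (oddness of $G$) with (ii) to obtain
\begin{equation}
    G(x_2,m)=G(2k\pi-x_1,m)=-G(x_1,m)+4k(2E(m)-K(m)).
\end{equation}
Equating this to $G(x_1,m)$ yields $G(x_1,m)=2k(2E(m)-K(m))=G(k\pi,m)$ (again by Lemma \ref{lem:B2}(ii)), which by the shift identity is equivalent to $G(x_1-k\pi,m)=0$. Here Lemma \ref{lem:B2}(iv) is the decisive input: since $m<m_T$, the only zero of $G(\cdot,m)$ is at the origin, so $x_1=k\pi$, and then $x_2=2k\pi-x_1=k\pi=x_1$, contradicting $x_1\ne x_2$.

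The main obstacle in executing this plan is ensuring that Lemma \ref{lem:B2}(iv) is genuinely available at the threshold $m<m_T$; this is why $m_T$ is defined exactly as the zero of $f$ in \eqref{eq:fwavelike} (which coincides with $G(\pi-\alpha(m),m)$), so the sub-threshold sign analysis of $G$ carried out in Lemma \ref{lem:B2} is precisely what powers the whole argument. No further analysis is required.
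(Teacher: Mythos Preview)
Your proof is correct and follows essentially the same approach as the paper: you derive the two conditions $\cos(x_1)=\cos(x_2)$ and $G(x_1,m)=G(x_2,m)$, split into the cases $x_2=x_1+2k\pi$ and $x_2=-x_1+2k\pi$, and resolve each using the periodicity and oddness of $G$ together with Lemma~\ref{lem:B2}(iv) and the positivity of $2E(m)-K(m)$ for $m<m_8$. The only cosmetic difference is that the paper first disposes of the range $m<\tfrac{1}{2}$ by noting that $\gamma^{(1)}(\cdot\,|\,m)$ is then strictly increasing, whereas your argument handles all $m<m_T$ uniformly.
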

\begin{proof}
Let $m < m_T$. We show that $\gamma(\cdot  | m)$ is injective on $\mathbb{R}$. 
We may without loss of generality assume that $m\geq \frac{1}{2}$ since for $m < \frac{1}{2}$, $\gamma^{(1)}(\cdot | m)$ is strictly increasing and hence injective. Thus from now on $m \in [\frac{1}{2}, m_T)$. 
 For a contradiction assume that there exist $x_1,x_2 \in \mathbb{R}$, $x_1 \neq x_2$ such that $\gamma(x_1 | m) = \gamma(x_2 | m)$. By comparing first and second components we infer from \eqref{eq:wavelikeparabulg} that
 $G(x_1,m) = G(x_2,m)$ and $\cos(x_1) = \cos(x_2)$.
 The latter equation yields $x_2 = \pm x_1 + 2l\pi$ for some $l \in \mathbb{Z}$. Now Lemma \ref{lem:B2} (i),(ii) implies
 \begin{align}
 \begin{split}
     G(x_1,m) = G(x_2,m) &= G(\pm x_1 + 2l\pi,m) \\
     &= \pm G(x_1,m) + 4l(2 E(m) - K(m)) . 
 \end{split}    
 \end{align}
 In the case of ``$\pm = + $'' we obtain $0 = 4l (2E(m) -K(m))$. However, $x_1 \neq x_2$ yields $l \neq 0$ and hence we infer that $2E(m) - K(m)= 0$. This implies $m = m_8$, which contradicts $m<m_T < m_8$, 
 due to Proposition \ref{prop:monotonequantity}. In the case of ``$\pm = -$'' we obtain
 $
     G(x_1,m)= 2 l(2 E(m) -K(m)) = G(l \pi,m). 
 $
 Using once more Lemma \ref{lem:B2} (ii) we infer that $G(x_1- l \pi,m ) = 0.$ We infer from Lemma \ref{lem:B2} (iv) that $x_1 - l \pi = 0$. However then $x_2 = - x_1 + 2l\pi= l\pi =x_1$, a contradiction. 
\end{proof}

\begin{lemma}\label{lem:largemodulus}
Let $m > m_T$. Then there exist $x_1, x_2 \in [0, 2\pi],$ $x_1 \neq x_2$ such that $\gamma(x_1 | m) = \gamma(x_2 | m).$
\end{lemma}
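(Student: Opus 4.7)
The plan is to locate the pair of preimages $(x_1,x_2)$ via a reflection symmetry of $G(\cdot,m)$ about $\pi/2$, rather than by perturbing from the teardrop situation. The self-intersection condition $\gamma(x_1|m)=\gamma(x_2|m)$ splits (using the explicit form \eqref{eq:wavelikeparabulg}) into the two scalar equations $\cos(x_1)=\cos(x_2)$ and $G(x_1,m)=G(x_2,m)$, so the task is to find two distinct points in $[0,2\pi]$ satisfying both. I would look for such a pair in the form $x_1=y$, $x_2=2\pi-y$ for some $y\in(0,\pi)$, $y\neq\pi$; the first equation is then automatic, and it remains only to enforce the second.

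The first step is to establish the reflection identity
\begin{equation}
    G(\pi-x,m)=G(\pi,m)-G(x,m)
\end{equation}
by the substitution $\phi=\pi-\theta$ in the integral \eqref{eq:2.7}, using $\sin(\pi-\theta)=\sin\theta$. Combined with property (i)--(ii) of Lemma \ref{lem:B2}, this identity gives $G(2\pi-y,m)=-G(y,m)+2G(\pi,m)$, so the equation $G(y,m)=G(2\pi-y,m)$ is equivalent to $G(y,m)=G(\pi,m)$, which by the reflection identity is in turn equivalent to $G(\pi-y,m)=0$. Hence it suffices to produce a nontrivial zero of $G(\cdot,m)$ inside $(0,\pi)$.

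This is where the hypothesis $m>m_T$ enters. From the monotonicity analysis of $G(\cdot,m)$ carried out in the proof of Lemma \ref{lem:B2}, the function $G(\cdot,m)$ strictly increases on $(0,\alpha(m))$ from $0$ to the positive value $G(\alpha(m),m)>0$, and then strictly decreases on $(\alpha(m),\pi-\alpha(m))$ to the value $G(\pi-\alpha(m),m)=f(m)$. Since $f$ is strictly decreasing with $f(m_T)=0$ by Proposition \ref{prop:monotonequantity}, the assumption $m>m_T$ yields $f(m)<0$. The intermediate value theorem therefore produces $z\in(\alpha(m),\pi-\alpha(m))$ with $G(z,m)=0$.

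Setting $y:=\pi-z\in(\alpha(m),\pi-\alpha(m))\subset(0,\pi)$ and defining $x_1:=y$, $x_2:=2\pi-y$, the two components of $\gamma(\cdot|m)$ agree at $x_1$ and $x_2$ by the computation above, while $x_1\neq x_2$ holds because $y\neq\pi$. The only substantive input is the sign of $f(m)$ for $m>m_T$, which is already recorded in Proposition \ref{prop:monotonequantity}; the rest is a symmetry bookkeeping exercise, so I do not expect any serious obstacle in executing this plan.
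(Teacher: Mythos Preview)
Your proof is correct and is essentially the same as the paper's: both locate a zero $z\in(0,\pi-\alpha(m))$ of $G(\cdot,m)$ via the intermediate value theorem (using $f(m)<0$ for $m>m_T$) and then take the self-intersection pair $(\pi-z,\pi+z)$, which is exactly your $(y,2\pi-y)$ after the substitution $y=\pi-z$. Your reflection identity $G(\pi-x,m)=G(\pi,m)-G(x,m)$ is just a repackaging of Lemma~\ref{lem:B2} (i)--(ii), which the paper invokes directly to compute $G(\pi+z,m)-G(\pi-z,m)=2G(z,m)$.
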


\begin{proof}
Since $m > m_T$ we infer from Proposition \ref{prop:monotonequantity} that 
\begin{equation}
     G\left( \pi - {\alpha(m)}, m \right) =  \int_0^{\pi - {\alpha(m)}} \frac{1-2m\sin^2 \theta}{ \sqrt{1- m \sin^2 \theta}} \; \mathrm{d}\theta  < 0.
\end{equation}
Since the integrand is positive for small $\theta > 0$ we infer that there must exist $y \in (0 , \pi - {\alpha(m)})$ such that 
$
    G(y,m) = 0
$
by continuity.
We claim that $x_1 = \pi - y$ and $x_2 = \pi + y$ yield a self-intersection. First note that 
$\gamma^{(2)}(\pi - y |  m) = - 2 \sqrt{m} \cos(\pi - y) = - 2 \sqrt{m} \cos( \pi + y) = \gamma^{(2)}( \pi + y  | m )  $. We conclude by Lemma \ref{lem:B2} (i),(ii)
\begin{align}
    \gamma^{(1)}(\pi + y | m) - \gamma^{(1)}(\pi - y | m )  &= G(\pi + y,m) -G(\pi - y, m) \\
    &= 2G(y,m) = 0.
\end{align}
The claim follows.
\end{proof}

\begin{proof}[Proof of Proposition \ref{prop:wavelikeunique}]
Let $\gamma$ and  $a<b$ be as in the statement. Up to isometries, scaling and reparametrization we may assume that $\gamma= \gamma(\cdot | m)$ for some $m \in (0,1)$. Without loss of generality we may assume $a \in [-\pi,0]$,
otherwise we use the periodicity properties \eqref{eq:antiperiod} and \eqref{eq:gammaperiod} and examine an appropriate isometric image of $\gamma$.
We need to show that $m = m_T $, $a= a_T$ and $b = b_T$. We first show $m = m_T$. Assume the opposite. Note that $m < m_T$ is impossible by Lemma \ref{lem:smallmodulus}. Hence we assume $m > m_T$. By Proposition \ref{prop:monotonequantity} we obtain (using $f$ defined there)
\begin{align}
    0 & > f(m) 
     = G\left(\pi - {\alpha(m)} , m \right). 
\end{align}
Since $G(x,m) > 0$ for small $x> 0$ one obtains that there exists $a_0 \in (0, \pi - {\alpha(m)}) $ such that $G(a_0,m) =0 $, i.e.\ $\gamma^{(1)}(a_0 | m) =0$. Using this, the evenness of $\gamma^{(2)}(\cdot | m)$ and the periodicity \eqref{eq:gammaperiod}, we obtain in particular $\gamma(-a_0 | m) = \gamma(a_0 | m)$ and $\gamma(2\pi - a_0 | m) = \gamma(2\pi + a_0 | m)$.
Combining these with $a \in [-\pi,0]$ and the embeddedness of $\gamma(\cdot | m) \vert_{[a,b)}$, we find that there are only two possible cases:
\begin{equation}\label{eq:possiab}
   \mbox{$a \in [-\pi,-a_0]$, $b\leq a_0$, \quad or  \quad $a \in (-a_0,0]$, $b \leq 2\pi + a_0$.}
\end{equation}
Now we note that $\gamma(a) = \gamma(b)$ and $T_\gamma(a) = - T_\gamma(b)$ yield a set of four equations
\begin{enumerate}
\item[(i)] $G(a,m) = G(b,m)$,
    \item[(ii)] $- 2 \sqrt{m} \cos(a) = - 2 \sqrt{m} \cos(b) $,
    \item[(iii)] $2 D(a)\sqrt{m} \sin(a)  =  -2 D(b)\sqrt{m} \sin(b) $,\\ where $D(x)= |\gamma'(x)|^{-1} = \left(\sqrt{4 m \sin^2(x) + \frac{(1-2m \sin^2(x))^2}{1-m \sin^2(x)}} \right)^{-1}$,
    \item[(iv)] $D(a) \frac{1-2m\sin^2(a)}{\sqrt{1- m \sin^2(a)}} = - D(b) \frac{1-2 m \sin^2(b)}{\sqrt{1-m \sin^2(b)}}$, where $D(a)$ and $D(b)$ are as in (iii).
\end{enumerate}
Note that equation (ii) implies $\cos(a)= \cos(b)$ and hence also $\cos^2(a) = \cos^2(b)$, whereupon also $\sin^2(a)= \sin^2(b).$ Since $D(x)$ depends only on $\sin^2(x)$ we infer $D(a) =D(b)$. With this in hand we obtain $(\cos(a),\sin(a)) = (\cos(b), - \sin(b))$ and $1- 2m \sin^2(a)= 0$.
We conclude from these equations that 
\begin{equation*}
    \mbox{$a = - b + 2\pi l$ \ for some $l \in \mathbb{Z}$,}
\end{equation*}
and 
$$\text{$a = k \pi \pm \arcsin \sqrt{\frac{1}{2m}}$ \ for some $k \in \mathbb{Z}$.}$$
Combining these with \eqref{eq:possiab}, we need to consider only $a=- \arcsin \sqrt{\frac{1}{2m}}={-\alpha(m)}\in(-\frac{\pi}{2},0)$ {(
by \eqref{eq:def alpha})}, and for $b$ only the two possibilities $b=-a$ or $b=2\pi-a$.
The former case can be ruled out since in this case one has $1-2m \sin^2x>0$ (i.e.\ $G'(x,m)> 0 $) for all $x \in (a,b)$, a contradiction to equation (i).
The latter case can also be ruled out since it yields $a<0$ and $b>2\pi$, which contradict Lemma \ref{lem:largemodulus} and the embeddedness requirement.
We have shown that $m = m_T$. Thereupon it is straightforward with the explicit formula \eqref{eq:wavelikeparabulg} and Lemma \ref{lem:B2} (v) to prove that (up to translations and isometries) $a=a_T$ and $b=b_T$.
 \end{proof}

\subsubsection{Orbitlike ECEs}\label{sec:orbECE}
 In this section we examine orbitlike ECEs. For this purpose we choose again reparametrizations of orbitlike elasticae in the same fashion as in the previous section. More precisely we define for this section
 \begin{equation}\label{eq:explorbit}
     \gamma(x | m) := \frac{1}{m}\begin{pmatrix}  2E(x,m) + (m-2) F(x,m) \\  -2\sqrt{1-m \sin^2(x)} \end{pmatrix} \quad (x \in \mathbb{R}),
 \end{equation}
 for arbitrary $m \in (0,1)$. Again $s \mapsto \gamma(\mathrm{am}(s,m) | m)$ is a prototype of an orbitlike elastica in the sense of Proposition \ref{prop:elaclassi}.
The curve $\gamma(\cdot | m)$ is $\pi$-periodic modulo shifts, more precisely
 \begin{equation}\label{eq:orbitperiod}
     \gamma(x+ \pi  | m) = \gamma(x | m) + \frac{1}{m} \begin{pmatrix} 2 E(m) + (m-2) K(m) \\ 0   \end{pmatrix} \quad  (x \in \mathbb{R}, m \in (0,1)).
 \end{equation}
It also has a reflection symmetry around $x = \frac{\pi}{2}$, more precisely
 \begin{equation}\label{eq:orbitantiper}
     \gamma(\tfrac{\pi}{2} + x | m ) - \gamma( \tfrac{\pi}{2} | m ) = R \big( \gamma(\tfrac{\pi}{2} - x | m) - \gamma( \tfrac{\pi}{2} | m )  \big), \quad R = \begin{pmatrix} - 1 & 0 \\ 0 & 1 \end{pmatrix}. 
 \end{equation}
It is also convenient to express the first component by
\begin{align}\label{eq:orbitfirst}
    \gamma^{(1)}(x | m) = \int_0^x \frac{1- 2 \sin^2\theta}{\sqrt{1-m \sin^2\theta}} \; \mathrm{d}\theta.
\end{align}

As in the previous section we are interested in which configurations yield orbitlike ECEs. 
It will turn out that ECEs occur only for one unique modulus $m_H \in (0,1)$ that is characterized by the unique solution $m \in (0,1)$ to  
\begin{equation}\label{eq:elaheart}
 g(m) :=  \int_{-\frac{\pi}{4}}^{\frac{5\pi}{4}} \frac{1-2 \sin^2\theta}{\sqrt{1- m \sin^2\theta}} \; \mathrm{d}\theta = 2 \int_{-\frac{\pi}{4}}^{\frac{\pi}{2}} \frac{1-2 \sin^2\theta}{\sqrt{1- m \sin^2\theta}} \; \mathrm{d}\theta =  0 .
\end{equation}
Existence and uniqueness of such $m_H$ are ensured by
\begin{proposition}[Proof in Appendix \ref{app:compu}]\label{prop:gdecr} 
The function $g$ defined in \eqref{eq:elaheart} is strictly decreasing in $(0,1)$. Moreover there exists a unique $m_H \in (0,1)$ such that $g(m_H) = 0$.
\end{proposition}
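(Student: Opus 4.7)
The equality of the two integrals in \eqref{eq:elaheart} is the easy part: the reflection $\theta\mapsto\pi-\theta$ sends $[\pi/2,5\pi/4]$ bijectively onto $[-\pi/4,\pi/2]$ and leaves $\sin^2\theta$ (hence the entire integrand) invariant, so $\int_{\pi/2}^{5\pi/4}=\int_{-\pi/4}^{\pi/2}$ and the factor of $2$ appears. For existence and uniqueness of $m_H$, uniqueness is immediate once monotonicity is proved, and for existence one has $g(0)=2\int_{-\pi/4}^{\pi/2}\cos(2\theta)\,\mathrm{d}\theta=1>0$, while $\lim_{m\to 1^-}g(m)=-\infty$ because near $\theta=\pi/2$ the integrand behaves like $\tfrac{-1}{|\cos\theta|}$, which is not integrable and has a negative sign. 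The intermediate value theorem and continuity then deliver a unique $m_H\in(0,1)$ with $g(m_H)=0$. So the real work is the strict decrease of $g$.

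\textbf{Differentiation under the integral sign} gives
\begin{equation}
g'(m)=\int_{-\pi/4}^{\pi/2}\frac{(1-2\sin^2\theta)\sin^2\theta}{(1-m\sin^2\theta)^{3/2}}\,\mathrm{d}\theta,
\end{equation}
whose integrand has mixed sign (positive on $(-\pi/4,\pi/4)$, negative on $(\pi/4,\pi/2)$). The key identity I would exploit is
\begin{equation}
\frac{\partial}{\partial\theta}\!\left[\frac{\cos\theta}{\sqrt{1-m\sin^2\theta}}\right]=-\frac{(1-m)\sin\theta}{(1-m\sin^2\theta)^{3/2}}.
\end{equation}
Writing the integrand above as $f(\theta)\cdot\frac{\sin\theta}{(1-m\sin^2\theta)^{3/2}}$ with $f(\theta):=\sin\theta(1-2\sin^2\theta)$, we notice that $f(-\pi/4)=0$ since $1-2\sin^2(-\pi/4)=0$, while at $\theta=\pi/2$ the boundary term vanishes thanks to $\cos(\pi/2)=0$. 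An integration by parts therefore yields the cleaner identity
\begin{equation}
(1-m)\,g'(m)=\int_{-\pi/4}^{\pi/2}\frac{\cos^2\theta\,(1-6\sin^2\theta)}{\sqrt{1-m\sin^2\theta}}\,\mathrm{d}\theta,
\end{equation}
in which the singular factor $(1-m\sin^2\theta)^{-3/2}$ has been replaced by $(1-m\sin^2\theta)^{-1/2}$.

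\textbf{Negativity of the transformed integral} is the crux. The sign-change point of $1-6\sin^2\theta$ is $\theta=\arcsin(1/\sqrt{6})<\pi/4$, so mixed sign persists, but now the negative portion (where $1-6\sin^2\theta$ is as small as $-5$) is taken over a long interval including $[\pi/4,\pi/2]$. I would proceed by the substitution $\psi=\pi/2-\theta$ on $[\pi/4,\pi/2]$, which matches it with $[0,\pi/4]$ and exchanges $\sin\leftrightarrow\cos$; combined with evenness on $[-\pi/4,\pi/4]$, this reduces everything to a single integral over $[0,\pi/4]$ of a difference of explicit terms with denominators $(1-m\sin^2\psi)^{1/2}$ and $(1-m+m\sin^2\psi)^{1/2}$. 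The second denominator is pointwise smaller, which amplifies the (negative) contribution of the region $\psi$ close to $0$ where $6\sin^2\psi-5$ is very negative. A pointwise inequality together with a weighted comparison of the two terms should conclude. Alternatively, one can exploit the representation
\begin{equation}
\tfrac{m}{2}\,g(m)=\bigl[2E(m)-(2-m)K(m)\bigr]+2\bigl[2E(\tfrac{\pi}{4},m)-(2-m)F(\tfrac{\pi}{4},m)\bigr]
\end{equation}
and apply the classical derivative formulas for $E,K$ (and their incomplete analogues), which reduce matters to inequalities of the type $E(\phi,m)>(1-m)F(\phi,m)$ that are straightforward from the integral definitions.

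\textbf{Main obstacle.} The hard part is clearly the sign analysis of $(1-m)g'(m)$ uniformly in $m\in(0,1)$. Unlike $f'$ in Proposition \ref{prop:monotonequantity}, whose sign can be read off directly after one rearrangement, here even after the integration by parts the integrand is indefinite and the negativity comes from a delicate balance between the two sub-intervals $[0,\pi/4]$ and $[\pi/4,\pi/2]$. I expect the appendix to settle this by a combination of a second integration by parts or symmetrization and a careful pointwise estimate; any uniform-in-$m$ argument must take advantage of the cancellation between the $(1-m\sin^2\theta)^{-1/2}$ factors after reflecting half of the interval.
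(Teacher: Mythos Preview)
Your setup is correct: the integration by parts leading to
\[
(1-m)\,g'(m)=\int_{-\pi/4}^{\pi/2}\frac{\cos^2\theta\,(1-6\sin^2\theta)}{\sqrt{1-m\sin^2\theta}}\,\mathrm{d}\theta
\]
is valid (the boundary terms vanish exactly as you say), and the endpoint analysis $g(0)=1$, $g(m)\to-\infty$ as $m\to1^-$ is fine. But the proof is genuinely incomplete at the decisive step, and you acknowledge this yourself. After your reflection $\psi=\tfrac{\pi}{2}-\theta$ and the use of evenness, the combined integrand on $[0,\pi/4]$ equals $2$ at $\psi=0$, so no pointwise inequality can close the argument; an honest integral comparison is still required, and neither of your two suggestions (a further ``weighted comparison'', or reduction to inequalities of type $E(\phi,m)>(1-m)F(\phi,m)$ via the elliptic-integral representation) is actually carried out. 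Incidentally, your displayed identity for $\tfrac{m}{2}g(m)$ has a coefficient slip: the incomplete part should carry the same factor as the complete part, not a doubled one, i.e.
\[
\tfrac{m}{2}\,g(m)=\bigl[2E(m)-(2-m)K(m)\bigr]+\bigl[2E(\tfrac{\pi}{4},m)-(2-m)F(\tfrac{\pi}{4},m)\bigr].
\]

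The paper takes a completely different and, for this particular integral, cleaner route that bypasses the sign-balance difficulty altogether. It expands $(1-m\sin^2\theta)^{-1/2}$ as a binomial series and writes
\[
g(m)=\sum_{k\ge 0}\frac{1}{k!}\prod_{l=0}^{k-1}\!\bigl(l+\tfrac{1}{2}\bigr)\,(A_k-2A_{k+1})\,m^k,\qquad A_k:=\int_{-\pi/4}^{5\pi/4}\sin^{2k}\theta\,\mathrm{d}\theta.
\]
A single integration by parts in $\theta$ gives the recursion $A_{k+1}=\tfrac{1}{2k+2}\bigl(-2^{-k}+(2k+1)A_k\bigr)$, from which one proves by induction that $kA_k\ge 2^{-k}$ and hence $A_k-2A_{k+1}\le 0$ for all $k\ge 1$ (strict for $k=1$, where $A_1-2A_2=\tfrac12(1-\tfrac{3\pi}{4})<0$). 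Thus every coefficient of $m^k$ with $k\ge 1$ is nonpositive and at least one is negative, so $g'(m)<0$ on $(0,1)$ immediately. What your approach buys is an identity that may be useful elsewhere, but for the present statement the power-series argument is both shorter and avoids entirely the ``hard part'' you correctly identify.
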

The numerical value of $m_H$ is $m_H \simeq 0.8436$, cf. Table \ref{tab1}.

\begin{definition}[Heart-shaped elastica]\label{def:heartela}
Let $a_H := - \frac{\pi}{4}$ and $b_H := \frac{5\pi}{4}$. Then $\gamma_H :=\gamma(\cdot | m_H)\vert_{[a_H,b_H]} \in C_{imm}^\infty([a_H,b_H];\mathbb{R}^2)$ is called \emph{heart-shaped elastica}. We will also call rescalings, isometric images and reparametrizations of $\gamma_H$ heart-shaped elasticae, but the notation $\gamma_H$  will always fix the representative defined above. 
\end{definition}
{Note carefully that the picture of the heart-shaped elastica in Figure \ref{img:heart} is a translated, rescaled and reflected version of the explicit parametrization $\gamma_H$.}

We will show that the heart-shaped elastica is, up to invariances, the unique orbitlike ECE. 

Our observations rely on a preparatory lemma which we will use very often in the sequel.
\begin{lemma}[Proof in Appendix \ref{app:compu}]\label{lem:compulemorbit}
    For all $m \in (0,1)$ one has 
  $
        2 E(m) + (m-2) K(m) < 0.
$
\end{lemma}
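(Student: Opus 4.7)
The plan is to set $h(m) := 2E(m) + (m-2)K(m)$ for $m \in [0,1)$ and show $h(0)=0$ together with $h'(m)<0$ on $(0,1)$, which immediately yields $h(m)<0$ as desired. The boundary value is trivial since $E(0)=K(0)=\pi/2$ gives $h(0) = \pi - \pi = 0$.

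For the derivative, I will invoke the standard identities
\begin{equation}
\frac{dE}{dm}(m) = \frac{E(m)-K(m)}{2m}, \qquad \frac{dK}{dm}(m) = \frac{E(m)-(1-m)K(m)}{2m(1-m)},
\end{equation}
and simplify $h'(m)$. Multiplying through by $2m(1-m)$, the $E$-coefficient becomes $2(1-m) + (m-2) = -m$ and the $K$-coefficient becomes $-2(1-m) + 2m(1-m) - (m-2)(1-m) = m(1-m)$, so
\begin{equation}
2m(1-m)\, h'(m) = -m\, E(m) + m(1-m)\, K(m),
\end{equation}
hence
\begin{equation}
h'(m) = \frac{(1-m)K(m) - E(m)}{2(1-m)}.
\end{equation}

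The remaining step is the inequality $E(m) > (1-m)K(m)$ for $m \in (0,1)$. Writing both quantities as integrals over $(0,\pi/2)$,
\begin{equation}
E(m) - (1-m)K(m) = \int_0^{\pi/2}\!\Big( \sqrt{1-m\sin^2\theta} - \tfrac{1-m}{\sqrt{1-m\sin^2\theta}} \Big)\,\mathrm{d}\theta,
\end{equation}
and the integrand equals $\frac{(1-m\sin^2\theta) - (1-m)}{\sqrt{1-m\sin^2\theta}} = \frac{m(1-\sin^2\theta)}{\sqrt{1-m\sin^2\theta}} = \frac{m\cos^2\theta}{\sqrt{1-m\sin^2\theta}} \geq 0$, with strict positivity except at $\theta=\pi/2$. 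Hence $E(m) > (1-m)K(m)$, so $h'(m) < 0$ throughout $(0,1)$, and combined with $h(0)=0$ the claim follows.

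I do not expect any genuine obstacle here: the computation is routine once the correct forms of $\frac{dE}{dm}$ and $\frac{dK}{dm}$ are used, and the integrand comparison $E > (1-m)K$ is a standard elementary inequality. The only point requiring mild care is the algebraic simplification leading to the clean expression for $h'(m)$, where cancellation of the $E$-coefficient depends on the precise combination $2(1-m)+(m-2)=-m$.
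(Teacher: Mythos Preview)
Your proof is correct. The overall strategy---showing the expression vanishes at $m=0$ and has strictly negative derivative on $(0,1)$---is the same as the paper's, but the implementations differ. The paper works with the ratio $h(m)=2E(m)/K(m)+(m-2)$, invokes an external estimate $\tfrac{d}{dm}\big(E/K\big)<-\tfrac12$ from \cite{LiYau1}, and then multiplies back by $K(m)>0$; you work directly with $h(m)=2E(m)+(m-2)K(m)$, compute $h'(m)=\big((1-m)K(m)-E(m)\big)/\big(2(1-m)\big)$, and verify $E(m)>(1-m)K(m)$ by the elementary integrand comparison $\tfrac{m\cos^2\theta}{\sqrt{1-m\sin^2\theta}}\geq0$. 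Your route is fully self-contained and slightly more elementary, at the cost of a small algebraic simplification step; the paper's route is shorter but leans on a cited inequality.
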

\begin{proposition}[Existence of orbitlike ECEs]
Each heart-shaped elastica is an ECE.
\end{proposition}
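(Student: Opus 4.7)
The proof plan is to directly verify the three defining properties of an ECE for the specific curve $\gamma_H = \gamma(\cdot|m_H)|_{[a_H,b_H]}$ (with $a_H=-\pi/4$, $b_H=5\pi/4$), and then note that all rescalings/isometries/reparametrizations inherit the ECE property. Smoothness and the elastica equation come for free from Proposition \ref{prop:elaclassi}, so the three things to check are: (a) $\gamma_H(a_H)=\gamma_H(b_H)$, (b) $T_{\gamma_H}(a_H)=-T_{\gamma_H}(b_H)$, and (c) injectivity of $\gamma_H$ on $[a_H,b_H)$.

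Items (a) and (b) are immediate from the explicit formula \eqref{eq:explorbit}. Since $\sin^2(\pm\pi/4)=\sin^2(5\pi/4)=1/2$, the second component $-\tfrac{2}{m}\sqrt{1-m\sin^2 x}$ takes the same value at $a_H$ and $b_H$. For the first component, the formula \eqref{eq:orbitfirst} yields $\gamma^{(1)}(b_H|m_H)-\gamma^{(1)}(a_H|m_H)=g(m_H)=0$ by the defining equation of $m_H$. For the tangents, the same calculation $1-2\sin^2 x\big|_{x=a_H,b_H}=0$ gives $(\gamma^{(1)})'=0$ at both endpoints, while $(\gamma^{(2)})'\propto \sin(2x)$ has opposite signs at $-\pi/4$ and $5\pi/4$, proving antiparallelism.

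The main work lies in (c). Suppose $\gamma(x_1|m_H)=\gamma(x_2|m_H)$ with $-\pi/4\le x_1<x_2<5\pi/4$. Comparing the second components forces $\sin^2 x_1=\sin^2 x_2$, hence $x_2=x_1+k\pi$ or $x_2=-x_1+k\pi$ for some $k\in\mathbb{Z}$. Using the shift formula \eqref{eq:orbitperiod}, write $\gamma^{(1)}(x+\pi|m_H)=\gamma^{(1)}(x|m_H)+P$, where $P:=\tfrac{1}{m_H}(2E(m_H)+(m_H-2)K(m_H))<0$ by Lemma \ref{lem:compulemorbit}. The first case gives $kP=0$ and so $k=0$, contradicting $x_1\neq x_2$. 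The second case, combined with the oddness of $\gamma^{(1)}(\cdot|m_H)$ coming from \eqref{eq:orbitfirst}, reduces to the equation $\gamma^{(1)}(x_1|m_H)=kP/2$.

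The remaining task is to show that for the relevant values of $k$ compatible with $x_1,x_2\in[-\pi/4,5\pi/4)$ (only $k=0,1,2$ survive this domain restriction), the equation $\gamma^{(1)}(x_1|m_H)=kP/2$ has no solution in the open interior. I plan to do this by tracking the monotonicity of $\gamma^{(1)}(\cdot|m_H)$, whose derivative $(1-2\sin^2 x)/\sqrt{1-m_H\sin^2 x}$ has the sign of $\cos(2x)$, so $\gamma^{(1)}$ is increasing on $(-\pi/4,\pi/4)$, decreasing on $(\pi/4,3\pi/4)$, and increasing on $(3\pi/4,5\pi/4)$. Using the reflection identity \eqref{eq:orbitantiper} I will extract the key boundary values $\gamma^{(1)}(\pm\pi/4|m_H)=\mp P/2$, $\gamma^{(1)}(\pi/2|m_H)=P/2$, $\gamma^{(1)}(3\pi/4|m_H)=3P/2$, $\gamma^{(1)}(\pi|m_H)=P$, and $\gamma^{(1)}(5\pi/4|m_H)=P/2$, from which each candidate value $kP/2$ is seen to be attained only at the excluded boundary points or at $x_1=x_2$. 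The main obstacle is really bookkeeping: one must be careful that the half-open domain of length $3\pi/2$ exceeds the natural period $\pi$, so the argument depends delicately on the sign $P<0$ supplied by Lemma \ref{lem:compulemorbit} together with $g(m_H)=0$ to close the loop exactly at the endpoints rather than in the interior.
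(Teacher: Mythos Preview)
Your argument is correct and covers all cases, but the injectivity step is organized differently from the paper's. The paper first observes that $H(z):=\gamma^{(1)}(z|m_H)-\gamma^{(1)}(\tfrac{\pi}{2}|m_H)$ is strictly positive on $(-\tfrac{\pi}{4},\tfrac{\pi}{2})$ and strictly negative on $(\tfrac{\pi}{2},\tfrac{5\pi}{4})$, forcing any hypothetical pair $x_1<x_2$ into one half; then the reflection symmetry \eqref{eq:orbitantiper} reduces to the left half, where $\sin^2x_1=\sin^2x_2$ gives $x_2=-x_1$ and the strict positivity of the integrand on $(-\tfrac{\pi}{4},\tfrac{\pi}{4})$ yields a contradiction in one line. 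Your route instead enumerates the three admissible values $k=0,1,2$ in $x_2=-x_1+k\pi$ and checks each via the monotonicity profile of $\gamma^{(1)}$; this is a bit more bookkeeping but equally valid and arguably more systematic.

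One small caution: the shift constant in \eqref{eq:orbitperiod} should be $\tfrac{2}{m_H}(2E(m_H)+(m_H-2)K(m_H))$, not $\tfrac{1}{m_H}(\cdots)$, as a direct computation of $\gamma^{(1)}(\pi|m_H)=2\gamma^{(1)}(\tfrac{\pi}{2}|m_H)$ shows. Your boundary values $\gamma^{(1)}(\pm\tfrac{\pi}{4}|m_H)=\mp P/2$, $\gamma^{(1)}(\tfrac{\pi}{2}|m_H)=P/2$, etc.\ are correct if $P$ denotes the actual period shift, but not with the explicit formula you wrote for $P$; fortunately this factor-of-two slip cancels throughout and leaves the logic of your case analysis intact.
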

\begin{proof}
It suffices to show that $\gamma_H$ is an ECE. 
By the representation \eqref{eq:explorbit}, and by using \eqref{eq:orbitfirst} and \eqref{eq:elaheart} for $\gamma_H^{(1)}$ and $\sin^2(\frac{5\pi}{4}) = \sin^2(- \frac{\pi}{4}) = \frac{1}{2}$ for $\gamma_H^{(2)}$, we find that $\gamma( - \frac{\pi}{4}  | m_H) = \gamma( \frac{5\pi}{4}  |  m_H).$
For the derivative we compute
$$(\gamma^{(1)})'( x|m_H) = \frac{1- 2 \sin^2(x) }{\sqrt{1- m_H \sin^2(x)}}, \quad (\gamma^{(2)})'(x | m_H) = \frac{2 \sin(x) \cos (x)}{\sqrt{1- m_H\sin^2(x)}}.$$
A direct computation yields that $(\gamma^{(1)})'( -\frac{\pi}{4}  | m_H)=(\gamma^{(1)})'( \frac{5\pi}{4} | m_H) = 0$ and also $(\gamma^{(2)})'(- \frac{\pi}{4} | m_H ) = - (\gamma^{(2)})'( \frac{5\pi}{4} | m_H )$, and hence $T_{\gamma( \cdot  | m_H)}(-\frac{\pi}{4})  = -T_{\gamma(\cdot | m_H)} ( \frac{5\pi}{4})$.

It remains to show that $\gamma(  \cdot  |  m_H) \vert_{[- \frac{\pi}{4} , \frac{5\pi}{4})}$ is injective. 
To this end assume that there exist $x_1,x_2$ such that $- \frac{\pi}{4} \leq x_1 < x_2 < \frac{5\pi}{4}$ and $\gamma(x_1 | m_H) = \gamma(x_2  | m_H).$
For the function $H(z):=\gamma^{(1)}(z | m_H) - \gamma^{(1)}(\frac{\pi}{2} | m_H)$ we notice by \eqref{eq:orbitfirst} and \eqref{eq:elaheart} that $H(-\frac{\pi}{4})= H( \frac{\pi}{2}) = 0 $. 
Moreover by \eqref{eq:orbitfirst} we have $H' > 0$ on $( -\frac{\pi}{4}, \frac{\pi}{4})$ and $H' < 0$ on $(\frac{\pi}{4},\frac{\pi}{2})$. 
This implies that $H > 0$ on $( -\frac{\pi}{4},\frac{\pi}{2})$.
Similarly $H<0$ on $(\frac{\pi}{2},\frac{5\pi}{4})$, and hence we only need to consider $x_1,x_2\in[-\frac{\pi}{4},\frac{\pi}{2}]$ or $x_1,x_2\in[\frac{\pi}{2},\frac{5\pi}{4}]$.
By reflection symmetry \eqref{eq:orbitantiper} we may assume that $x_1,x_2\in[-\frac{\pi}{4},\frac{\pi}{2}]$. 
By comparing the second components we infer that $\sin^2(x_1) = \sin^2(x_2)$ so that (by $x_1,x_2\in[-\frac{\pi}{4},\frac{\pi}{2}]$) $x_2 = -x_1$, and hence $x_1,x_2 \in [- \frac{\pi}{4},\frac{\pi}{4}].$
However observe that 
\begin{equation}
    0 = \gamma^{(1)}(x_2 | m_H ) - \gamma^{(1)} (x_1 | m_H) = \int_{x_1}^{x_2} \frac{1-2\sin^2 \theta}{\sqrt{1- m_H \sin^2 \theta}} \; \mathrm{d}\theta,
\end{equation}
which is a contradiction since $1- 2 \sin^2 \theta > 0$ for all $ \theta \in (- \frac{\pi}{4}, \frac{\pi}{4}).$
\end{proof}

\begin{proposition}[Uniqueness of orbitlike ECEs] \label{prop:uniqueorbit}
{Let $a<b$ and suppose} that $\gamma \in C^\infty_{imm}([a,b]; \mathbb{R}^2)$ is an orbitlike ECE. Then $\gamma$ is a heart-shaped elastica.
\end{proposition}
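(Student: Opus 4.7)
The plan is to follow closely the strategy of Proposition \ref{prop:wavelikeunique}, with one topological argument replacing the more delicate case analysis of the wavelike situation, exploiting the sign-definiteness of the curvature for orbitlike elasticae.

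First, up to rescaling, reparametrization, and Euclidean isometries, I would assume $\gamma = \gamma(\cdot | m) \vert_{[a,b]}$ with $\gamma(\cdot | m)$ given by \eqref{eq:explorbit} for some $m \in (0,1)$. A short direct computation from \eqref{eq:orbitfirst} yields $|\gamma'(x | m)| = 1/\sqrt{1-m\sin^2 x}$ and, more importantly, the clean identity
\begin{equation}
T_{\gamma(\cdot | m)}(x) = (\cos 2x,\; \sin 2x).
\end{equation}
The ECE condition $T_\gamma(a) = -T_\gamma(b)$ therefore reads $b - a \in \tfrac{\pi}{2} + \pi\mathbb{Z}$, while equating second components in $\gamma(a | m)=\gamma(b | m)$ gives $\sin^2 a = \sin^2 b$. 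Combining the two forces $\sin^2 a = \sin^2 b = \tfrac{1}{2}$, so $a, b \in \{k\pi \pm \tfrac{\pi}{4} : k\in\mathbb{Z}\}$ with opposite $\pm$ signs, and the first-component equality becomes $\int_a^b \frac{1-2\sin^2\theta}{\sqrt{1-m\sin^2\theta}}\,\mathrm{d}\theta = 0$.

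Using the quasi-periodicity \eqref{eq:orbitperiod} (a horizontal $\mathbb{R}^2$-translation realized by a $\pi$-shift in the parameter) and the reflection symmetry \eqref{eq:orbitantiper} (a reflection about the $y$-axis combined with parameter reversal), I would next normalize $a = -\tfrac{\pi}{4}$, so that $b = k\pi + \tfrac{\pi}{4}$ for some integer $k \geq 0$. The case $k = 0$ is ruled out immediately: the integrand $\frac{1-2\sin^2\theta}{\sqrt{1-m\sin^2\theta}}$ is strictly positive on $(-\tfrac{\pi}{4}, \tfrac{\pi}{4})$, contradicting the first-component equation. The case $k = 1$ gives exactly $g(m) = 0$, whose unique solution in $(0,1)$ is $m = m_H$ by Proposition \ref{prop:gdecr}, yielding the heart-shaped elastica.

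The main obstacle is to rule out $k \geq 2$, and here I would appeal to Hopf's Umlaufsatz, in the same spirit as Step 5 of the proof of Proposition \ref{prop:selfinters}. Since $\gamma$ is embedded on $[a,b)$ with $\gamma(a) = \gamma(b)$ and antipodal tangents there, a sufficiently small local smoothing of the cusp at the common endpoint produces an embedded closed $C^1$-curve $\tilde{\gamma}$, whose total tangent rotation equals $\pm 2\pi$ by Hopf. Along the ECE the tangent angle $2x$ increases monotonically by $(2k+1)\pi$, and the smoothing contributes an additional $\pm\pi$ at the cusp, so the total rotation of $\tilde\gamma$ must equal $2k\pi$ or $(2k+2)\pi$; matching with $\pm 2\pi$ forces $k\in\{0,1\}$, completing the classification. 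A more hands-on alternative, mimicking Lemmas \ref{lem:smallmodulus} and \ref{lem:largemodulus}, would directly construct self-intersections of $\gamma(\cdot | m)$ on $[-\tfrac{\pi}{4}, k\pi + \tfrac{\pi}{4})$ for $k \geq 2$ via the periodicity of the integrand and Lemma \ref{lem:compulemorbit}; the topological route is however much cleaner here.
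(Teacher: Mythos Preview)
Your proof is correct and the overall setup matches the paper's, but you diverge at the crucial step of bounding the length of the parameter interval. Your clean identity $T_{\gamma(\cdot|m)}(x)=(\cos 2x,\sin 2x)$ is a genuine simplification over the paper's four-equation system (i)--(iv); it gives $b-a\in\tfrac{\pi}{2}+\pi\mathbb{Z}$ directly and, combined with $\sin^2 a=\sin^2 b$, immediately yields $\sin^2 a=\tfrac{1}{2}$ together with the ``opposite $\pm$ signs'' conclusion, so the paper's Case~B never even arises in your treatment. The main difference is how you exclude $k\ge 2$: the paper invokes Lemma~\ref{lem:interorbit}, which exhibits an explicit self-intersection of $\gamma(\cdot|m)$ inside $(-\tfrac{\pi}{2},\tfrac{\pi}{2})$, and then uses $\pi$-periodicity to force $b<\tfrac{3\pi}{2}$ by embeddedness. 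Your Hopf/rotation-number argument is more conceptual and entirely bypasses that lemma, exploiting the sign-definite curvature of orbitlike elasticae through the monotone tangent angle $2x$; it is arguably cleaner, though the ``smooth the cusp to an embedded $C^1$ curve'' step, while standard, deserves one sentence of justification (the two arcs near the cusp lie locally on the same side of the common tangent line since $T(a)=-T(b)$, so a small rounding cap stays embedded and contributes exactly $\pm\pi$ of turning). Your alternative hands-on route via explicit self-intersections would reproduce the paper's argument almost verbatim.
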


For the proof we need a preparatory lemma, similar to the wavelike case. 
\begin{lemma}\label{lem:interorbit}
Let $m \in (0,1)$ be arbitrary. Then there exist distinct points $x_1,x_2 \in (- \frac{\pi}{2}, \frac{\pi}{2})$ such that
$\gamma(x_1 | m)= \gamma(x_2 | m).$  
\end{lemma}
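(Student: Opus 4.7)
The plan is to exploit the symmetry of the parametrization \eqref{eq:explorbit}. Observe that the second component $\gamma^{(2)}(x\,|\,m)=-\frac{2}{m}\sqrt{1-m\sin^2(x)}$ depends on $x$ only through $\sin^2(x)$, so it is an even function of $x$. Meanwhile, using \eqref{eq:orbitfirst}, the first component is an odd function of $x$, since its integrand $\theta\mapsto \frac{1-2\sin^2\theta}{\sqrt{1-m\sin^2\theta}}$ is even. Therefore, any nonzero $x^*\in(-\tfrac{\pi}{2},\tfrac{\pi}{2})$ for which $\gamma^{(1)}(x^*\,|\,m)=0$ immediately yields the desired self-intersection by taking $x_1=-x^*$ and $x_2=x^*$, since then $\gamma^{(1)}(x_1\,|\,m)=-\gamma^{(1)}(x_2\,|\,m)=0$ and $\gamma^{(2)}(x_1\,|\,m)=\gamma^{(2)}(x_2\,|\,m)$.

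It thus suffices to locate a nonzero zero of $x\mapsto\gamma^{(1)}(x\,|\,m)$ in $(0,\tfrac{\pi}{2})$. Here I would use the intermediate value theorem on the interval $(\tfrac{\pi}{4},\tfrac{\pi}{2})$. First, from \eqref{eq:orbitfirst}, the integrand is strictly positive on $(0,\tfrac{\pi}{4})$ (where $1-2\sin^2\theta>0$), so $\gamma^{(1)}(\tfrac{\pi}{4}\,|\,m)>0$. Second, at the endpoint $x=\tfrac{\pi}{2}$, the definition \eqref{eq:explorbit} gives
\begin{equation}
    \gamma^{(1)}(\tfrac{\pi}{2}\,|\,m)=\frac{1}{m}\bigl(2E(m)+(m-2)K(m)\bigr),
\end{equation}
which is strictly negative by Lemma \ref{lem:compulemorbit}. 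By continuity, there exists $x^*\in(\tfrac{\pi}{4},\tfrac{\pi}{2})$ with $\gamma^{(1)}(x^*\,|\,m)=0$, and setting $x_1=-x^*,x_2=x^*\in(-\tfrac{\pi}{2},\tfrac{\pi}{2})$ completes the proof.

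There is essentially no obstacle in this argument; the entire content is already packaged in Lemma \ref{lem:compulemorbit}, which supplies the crucial sign $2E(m)+(m-2)K(m)<0$ needed to push $\gamma^{(1)}$ past zero before $x$ leaves $(-\tfrac{\pi}{2},\tfrac{\pi}{2})$. The only thing to double-check is that the candidate points $\pm x^*$ indeed lie strictly inside the open interval, which is automatic since $x^*\in(\tfrac{\pi}{4},\tfrac{\pi}{2})$.
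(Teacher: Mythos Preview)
Your proof is correct and essentially identical to the paper's: both exploit the odd/even symmetry of the two components of $\gamma(\cdot\,|\,m)$ and use the intermediate value theorem together with Lemma~\ref{lem:compulemorbit} to locate a zero $x^*$ of $\gamma^{(1)}(\cdot\,|\,m)$ in $(0,\tfrac{\pi}{2})$, then take $x_1=-x^*$, $x_2=x^*$. The only cosmetic difference is that you pin down $x^*\in(\tfrac{\pi}{4},\tfrac{\pi}{2})$ explicitly, whereas the paper just notes $\gamma^{(1)}(x\,|\,m)>0$ for small $x>0$.
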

\begin{proof}
    Note that, by \eqref{eq:orbitfirst}, $ \gamma^{(1)}(x | m)$ is positive for small $x > 0$ 
    but for $x= \frac{\pi}{2}$ we obtain by Lemma \ref{lem:compulemorbit}, $\gamma^{(1)}(\frac{\pi}{2} | m)=\frac{1}{m} (2E(m) + (m-2) K(m) )<0$.
     Hence there exists $y \in (0, \frac{\pi}{2})$ such that $\gamma^{(1)}(y | m)=0$. We claim that $x_1 = -y$ and $x_2 = y$ yield a self-intersection. Indeed, since $\gamma^{(2)}(\cdot | m)$ is an even function we infer $\gamma^{(2)}(y | m) = \gamma^{(2)}(-y | m)$ and by the choice of $y$ and oddness of $\gamma^{(1)}$ we obtain $\gamma^{(1)}(y | m) = \gamma^{(1)}(-y | m) =0.$ 
\end{proof}

\begin{proof}[Proof of Proposition \ref{prop:uniqueorbit}]
  Let $\gamma$ and $a<b$ be as in the statement. Up to isometries, scaling and reparametrization we may assume that $\gamma = \gamma(\cdot  | m)$ for some $m \in (0,1)$.
  
  We may also assume (performing possibly another shift and using \eqref{eq:orbitantiper}) that $a \in [- \frac{\pi}{2},0 ]$. 
  We will now show that $a = - \frac{\pi}{4}$, $b = \frac{5\pi}{4}$ and $m$ satisfies \eqref{eq:elaheart}. 
  By \eqref{eq:explorbit} and \eqref{eq:orbitfirst} we deduce that the conditions $\gamma(a | m) = \gamma(b | m)$ and $T_{\gamma(\cdot | m)}(a)  = -T_{\gamma(\cdot | m)}(b)$ amount to the following set of equations
 \begin{enumerate}
 \item[(i)] $\int_a^b \frac{1- 2 \sin^2(\theta)}{\sqrt{1-m \sin^2(\theta)}} \; \mathrm{d}\theta = 0$,
     \item[(ii)] $\sqrt{1- m \sin^2(a)} = \sqrt{1-m \sin^2(b)}$,
     \item[(iii)] $D(a) \left( \frac{  2\cos(a)\sin(a)}{\sqrt{1- m \sin^2(a)}} \right) =- D(b) \left( \frac{  2 \cos(b)\sin(b)}{\sqrt{1- m \sin^2(b)}} \right)$, \\ where $D(x) = |\gamma'(x)|^{-1}= \left(  \frac{ 4 \cos^2(x) \sin^2(x) + (1-2\sin^2(x))^2 }{1-m \sin^2(x)}\right)^{-1/2}.$
     \item[(iv)] $D(a) \frac{1-2\sin^2(a)}{\sqrt{1-m \sin^2(a)}} =- D(b)  \frac{1- 2 \sin^2(b)}{\sqrt{1-m \sin^2(b)}}$, where $D(x)$ is as in (iii).
 \end{enumerate}
 Note that (ii) implies that $\sin^2(a) = \sin^2(b)$ and hence also $\cos^2(a) = \cos^2(b)$ which yields also $D(a) = D(b)$.
 From (ii), (iii) and (iv) we conclude thereupon $\sin^2(a)= \sin^2(b)$, $\cos(a) \sin(a) = - \cos(b) \sin(b)$ and $1-2 \sin^2(a) = 1-2 \sin^2(b) = 0$.
 As a consequence of these equations we obtain
     $$
     \mbox{$a,b \in \left\lbrace l \pi \pm \frac{\pi}{4}  : l \in \mathbb{Z} \right\rbrace = \left\lbrace \frac{(2l+1) \pi}{4} : l \in \mathbb{Z}\right\rbrace$}$$
and 
$$\text{$a = \pm b +k \pi$ for some $k \in \mathbb{Z}$}.$$
Since $a \in [-\frac{\pi}{2}, 0]$ the only possibility for $a$ is $a = - \frac{\pi}{4}$.
By Lemma \ref{lem:interorbit} $\gamma(x_1 | m) = \gamma(x_2  | m)$  for some $x_1 \neq  x_2 \in [-\frac{\pi}{2}, \frac{\pi}{2}]$ and by \eqref{eq:orbitperiod} we also have $\gamma(x_1 + \pi | m ) = \gamma(x_2 + \pi  | m)$. 
The fact that $\gamma(\cdot | m)\vert_{[a,b)}$ needs to be embedded and $a = - \frac{\pi}{4}$ implies hence that $b < \frac{3\pi}{2}.$ All the previous considerations leave only three cases
 \begin{align}
     (a,b)=
     \begin{cases}
         (- \tfrac{\pi}{4},\tfrac{\pi}{4}) & \text{(Case A)},\\
         (- \tfrac{\pi}{4},\tfrac{3\pi}{4}) & \text{(Case B)},\\
         (- \tfrac{\pi}{4},\tfrac{5\pi}{4}) & \text{(Case C)}.
     \end{cases}
 \end{align}
Case A can be ruled out since $1- 2 \sin(\theta) > 0$ on $(- \frac{\pi}{4}, \frac{\pi}{4})$ and this is a contradiction to equation (i). 
Case B would contradict $\cos(a) \sin(a) = - \cos(b) \sin(b)$, and hence this case is ruled out by equation (iii).
The only remaining case is Case C, i.e.\ $a= - \frac{\pi}{4}$, $b = \frac{5\pi}{4}$. 
This with equation (i) and the definition of $m_H$ directly imply, using \eqref{eq:elaheart}, that $m=m_H$. 
 The claim is shown. 
\end{proof}

\subsection{Uniqueness results for the variational inequality}

Now that we have found all ECEs, there are only three types of candidates for solutions of \eqref{eq:pertfunc} --- and hence only three types of candidates for minimizers; compositions of two teardrop elasticae, one teardrop elastica and one heart-shaped elastica, and two heart-shaped elasticae.

From now on we use the shorthand notation $\gamma= \gamma_1 \oplus \gamma_2$ if $\gamma$ is the concatenation of two curves $\gamma_1$ and $\gamma_2$. 
In this sense we can say that each solution of \eqref{eq:pertfunc} is of the form $\gamma = [S_1(a_1 \gamma_{T/H}) \circ \Phi_1] \oplus [S_2 (a_2 \gamma_{T/H}) \circ \Phi_2]$, where $a_1,a_2 > 0$ are scaling factors, $S_1,S_2$ are Euclidean isometries and $\Phi_1, \Phi_2$ are reparametrizations. Sometimes our notation will swallow the reparametrizations $\Phi_1,\Phi_2$ -- but only if it is ensured that reparametrizations can be chosen in such a way that the curves lie in $\mathcal{A}_0$. Notice that this point is actually delicate, since passing through one of the components in a reverse direction will affect the total curvature $N[\gamma]$. Luckily $\gamma_T$ and $\gamma_H$ have a symmetry: Passing through $\gamma_T$ and $\gamma_H$ in a reverse direction is actually the same as passing through an isometric image of $\gamma_T$ or $\gamma_H$ in forward direction. 
This is easily checked since for $\gamma_H$ we already know the reflection symmetry \eqref{eq:orbitantiper} with the fact that $\frac{\pi}{2}$ is the midpoint of $a_H$ and $b_H$, while for $\gamma_T$ we infer from \eqref{eq:wavelikeparabulg} and Lemma \ref{lem:B2} the simpler symmetry
\begin{equation}\label{eq:gammaT gammH symmetry}
    \gamma_T(-x) = R\gamma_T(x),\ \mbox{where $R$ is the same matrix as in \eqref{eq:orbitantiper}}. 
\end{equation}
Hence we may actually assume that $\Phi_1$ and $\Phi_2$ are orientation-preserving --- and can safely be disregarded.

What remains unclear is whether all  configurations above actually yield solutions of the variational inequality \eqref{eq:pertfunc}. In this section we will finally show that only the \emph{elastic two-teardrop}, rigorously defined as follows, yields a solution to \eqref{eq:pertfunc}.

\begin{definition}[Elastic two-teardrop] \label{def:two_teardrop}
A curve $\gamma\in \mathcal{A}_0$ is called \emph{elastic two-teardrop} if it coincides up to scaling, isometries and reparametrization with 
$\gamma_{2T}$ defined by
$$\gamma_{2T}(x):=\gamma_{T}(x-\pi+\alpha(m_T) )$$ 
for $x \in (0, 2(\pi-\alpha(m_T)))$, and 
$$\gamma_{2T}(x):=2\gamma_T(\pi-\alpha(m_T)) - \gamma_T(x+\pi-\alpha(m_T))$$
for $x \in (-2(\pi-\alpha(m_T)), 0)$,
where $\alpha(m_T)$ is as in \eqref{eq:def alpha}. The notation $\gamma_{2T}$ will be used exclusively for the above parametrization.
We also define
    \begin{equation}\label{eq:defC2T}
        C_{2T}:=\bar{B}[\gamma_{2T}] \Big(=32(2m_T-1)F({\pi-\alpha(m_T)},m_T)^2\Big).
    \end{equation}
\end{definition}
We remark that this shape corresponds to $\gamma = S_1 \gamma_T \oplus S_2 \gamma_T$, for suitably chosen $S_1,S_2$. 
An important observation is that $a_1 = a_2$ needs to be ensured.

In the sequel we will rule out different combinations of $\gamma_T$ and $\gamma_H$ and different scaling factors $a_1,a_2$. 
We first rule out compositions of two heart-shaped elasticae.

\begin{lemma}\label{lem:227}
    Let $\gamma \in H^2_{imm}(\mathbb{T}^1; \mathbb{R}^2)$ be composed of two (possibly rescaled and reparametrized) isometric copies of $\gamma_H$. Then $\gamma \not \in \mathcal{A}_0$.
\end{lemma}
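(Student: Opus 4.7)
The plan is to rule out $\gamma\in\mathcal{A}_0$ by showing that the rotation number of any composite of two heart-shaped elasticae necessarily lies in $\{0,3\}$, hence cannot be $1$. This reduces the problem to a single clean computation: the total signed curvature of a heart-shaped elastica $\gamma_H$.

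First I would show that
\begin{equation}
    \int_{\gamma_H} k\, \mathrm{d}s = 3\pi.
\end{equation}
Recall that $\gamma_H=\gamma(\,\cdot\,|m_H)|_{[a_H,b_H]}$ with $[a_H,b_H]=[-\tfrac{\pi}{4},\tfrac{5\pi}{4}]$, and that the arc-length reparametrization is given by $s\mapsto\gamma(\operatorname{am}(s,m_H)|m_H)$. Setting $x=\operatorname{am}(s,m_H)$, we have $\tfrac{dx}{ds}=\dn(s,m_H)=\sqrt{1-m_H\sin^2 x}$, while by Proposition \ref{prop:elaclassi}(iv) the signed curvature in arc length is $k(s)=2\dn(s,m_H)=2\sqrt{1-m_H\sin^2 x}$. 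Thus $k\,\mathrm{d}s=2\,\mathrm{d}x$, and
\begin{equation}
    \int_{\gamma_H} k\, \mathrm{d}s \;=\; 2\int_{a_H}^{b_H} \mathrm{d}x \;=\; 2\cdot\tfrac{3\pi}{2}\;=\;3\pi.
\end{equation}

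Next I would use this to control the rotation number of any composite. Write $\gamma=\gamma_1\oplus\gamma_2$ where each $\gamma_i=S_i(a_i\gamma_H)\circ\Phi_i$ with $S_i$ a Euclidean isometry, $a_i>0$ a scaling factor, and $\Phi_i$ a reparametrization. The total signed curvature $\int k\,\mathrm{d}s$ is invariant under scaling and under orientation-preserving reparametrization; it flips sign under reflection of the isometry, and also under orientation-reversing reparametrization. Using the reflection symmetry \eqref{eq:orbitantiper} of $\gamma_H$ (with midpoint $\tfrac{\pi}{2}$ of $[a_H,b_H]$), any orientation-reversing $\Phi_i$ can be absorbed into the reflection part of $S_i$, as already noted preceding Definition \ref{def:two_teardrop}. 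Hence each component contributes $\int_{\gamma_i} k\,\mathrm{d}s=\pm 3\pi$, and therefore
\begin{equation}
    \int_\gamma k\,\mathrm{d}s \;\in\; \{-6\pi,\,0,\,6\pi\},\qquad \text{so}\quad N[\gamma]\in\{0,3\}.
\end{equation}
In particular $N[\gamma]\neq 1$, which implies $\gamma\notin\mathcal{A}_0$.

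The substantive step is the identity $k\,\mathrm{d}s=2\,\mathrm{d}x$ in the parametrization \eqref{eq:explorbit}; everything else is orientation bookkeeping. I do not anticipate a genuine obstacle beyond verifying that all reparametrization ambiguities are accounted for by the reflection symmetry of $\gamma_H$, which is already recorded in the paper. It is also instructive to note that the same technique yields $\int_{\gamma_T} k\,\mathrm{d}s=\pi$ for a teardrop elastica, so composites of two teardrops may achieve $N=1$ (consistent with $\gamma_{2T}$ being admissible), while the $3\pi$ contribution for hearts forces $N\in\{0,3\}$ here.
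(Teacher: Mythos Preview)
Your proposal is correct and follows essentially the same approach as the paper: compute $\int_{\gamma_H} k\,\mathrm{d}s=3\pi$ and conclude $N[\gamma]\in\{0,3\}$. The paper obtains the key identity directly in the $x$-parametrization via $k[\gamma_H]\,|\gamma_H'|=2$, whereas you route through the arc-length parametrization to get $k\,\mathrm{d}s=2\,\mathrm{d}x$; these are the same computation, and your orientation bookkeeping is slightly more explicit than the paper's but not substantively different.
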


\begin{proof}
    We compute
    \begin{equation}
        \int_{\gamma_H} k \; \mathrm{d}s = \int_{-\frac{\pi}{4}}^{\frac{5\pi}{4}} k[\gamma_H] |\gamma_H'| \; d \theta =  \int_{-\frac{\pi}{4}}^{ \frac{5\pi}{4}} \frac{2\sqrt{1- m_H \sin^2(\theta)}}{\sqrt{1- m_H \sin^2(\theta)}} \; \mathrm{d} \theta = 3 \pi.
    \end{equation}
    In particular each concatenation $\gamma$ of two copies of $\gamma_H$ satisfies either $N[\gamma] = 0$ or $N [\gamma] =  \frac{1}{2\pi}(3\pi + 3\pi) = 3$. Hence $N[\gamma] = 1$ is impossible, implying $\gamma \not \in \mathcal{A}_0$.
\end{proof}

Another type to discuss is a combination of a teardrop elastica and a heart-shaped elastica. If $\gamma \in \mathcal{A}_0$ is such combination then --- according to  Lemma \ref{lem:optireg} --- $k[\gamma]$ is continuous. From this condition one can read off the admissible scaling factors $a_1,a_2$. 

\begin{lemma}\label{lem:rescaling}
Suppose that $\gamma \in H^2_{imm}( \mathbb{T}^1; \mathbb{R}^2)$ is of the form $\gamma = S_1(a_1 \gamma_T) \oplus S_2(a_2 \gamma_H)$ for some $a_1,a_2 > 0$ and isometries $S_1,S_2$ of $\mathbb{R}^2$. Suppose further that $k[\gamma]$ is continuous. Then $\frac{a_2}{a_1} = \frac{\sqrt{2-m_H}}{\sqrt{2m_T-1}}$. 
\end{lemma}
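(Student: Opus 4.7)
The plan is to exploit the elementary scaling law $|k[a\gamma]| = |k[\gamma]|/a$ together with the invariance of $|k|$ under Euclidean isometries: under these observations, continuity of $k[\gamma]$ at the junction points of the concatenation is equivalent to equating the boundary curvatures of $a_1 \gamma_T$ and $a_2 \gamma_H$, which is a single scalar identity.

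First I would compute $|k|$ at the two endpoints of $\gamma_T = \gamma(\cdot \mid m_T)\vert_{[a_T,b_T]}$. Since $s \mapsto \gamma(\am(s,m)\mid m)$ is the standard wavelike elastica of Proposition \ref{prop:elaclassi}(ii) with $k = 2\sqrt{m}\cn(s,m)$, and $\cn(\am(s,m),m) = \cos(\am(s,m))$, the reparametrization in \eqref{eq:wavelikeparabulg} satisfies $k[\gamma(\cdot\mid m)](x) = 2\sqrt{m}\cos(x)$. Evaluating at $x = \pm(\pi - \alpha(m_T))$ and using the identity $\sin^2\alpha(m_T) = \frac{1}{2m_T}$ from \eqref{eq:def alpha} yields
\begin{equation}
|k[\gamma_T](a_T)| = |k[\gamma_T](b_T)| = 2\sqrt{m_T}\sqrt{1 - \tfrac{1}{2m_T}} = \sqrt{2(2m_T - 1)}.
\end{equation}
Analogously, for $\gamma_H = \gamma(\cdot\mid m_H)\vert_{[a_H,b_H]}$ the orbitlike formula $k = 2\dn(s,m)$ combined with $\dn(\am(s,m),m) = \sqrt{1-m\sin^2(\am(s,m))}$ gives $k[\gamma(\cdot\mid m_H)](x) = 2\sqrt{1 - m_H\sin^2(x)}$, and $\sin^2(-\tfrac{\pi}{4}) = \sin^2(\tfrac{5\pi}{4}) = \tfrac{1}{2}$ implies
\begin{equation}
|k[\gamma_H](a_H)| = |k[\gamma_H](b_H)| = \sqrt{2(2 - m_H)}.
\end{equation}

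Since both $\gamma_T$ and $\gamma_H$ carry equal boundary curvature magnitudes at their two endpoints, the two parameter-values on $\mathbb{T}^1$ at which the two ECEs meet produce the \emph{same} scalar identity. Applying the scaling and isometry invariances mentioned above, continuity of $k[\gamma]$ at (either of) the junctions becomes
\begin{equation}
\frac{\sqrt{2(2m_T - 1)}}{a_1} = \frac{\sqrt{2(2 - m_H)}}{a_2},
\end{equation}
from which $a_2/a_1 = \sqrt{2 - m_H}/\sqrt{2m_T - 1}$ is immediate. There is no serious obstacle in this argument; the only small thing to keep in mind is that matching \emph{signs} at the two joins constrains the reflections encoded in $S_1$ and $S_2$, but this plays no role in the ratio of the scaling factors and can therefore be safely disregarded for the present statement.
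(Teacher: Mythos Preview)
Your proposal is correct and follows essentially the same approach as the paper: compute the boundary curvature magnitudes of $\gamma_T$ and $\gamma_H$ from the explicit formulas, apply the scaling law $|k[a\gamma]|=|k[\gamma]|/a$ and isometry invariance, and match the values at a junction point using continuity of $k[\gamma]$. The paper's computations and yours coincide line for line, including the observation that signs are absorbed by the isometries and do not affect the ratio.
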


\begin{proof}
Let $\gamma$ be as in the statement. 
Let $[a,b]$ denote the interval on which $\gamma$ is a reparameterization of $S_1(a_1 \gamma_T)$; then $\gamma$ is a reparameterization of $S_2(a_2 \gamma_H)$ on $[b,a]$.
     {Notice that $|k[\gamma_T]|$ (resp.\ $|k[\gamma_H]|$) takes the same value at the endpoints $a_T,b_T$ (resp.\ $a_H,b_H$).}
    With this in hand we can compute $|k[\gamma](a)|$ in two ways. Firstly using \eqref{eq:def alpha}
    \begin{align}
        |k[{\gamma}](a)| & = \frac{1}{a_1} |k[\gamma_T](a_T)|  =  \frac{2}{a_1}  \sqrt{m_T} \left\vert\cos\left(\pi - \arcsin \sqrt{\frac{1}{2m_T}}\right)\right\vert \\ &= \frac{2}{a_1}  \sqrt{m_T} \sqrt{1- \frac{1}{2m_T}} =  \frac{1}{a_1} \sqrt{2} \sqrt{2m_T-1}.
    \end{align}
    Note that we have no way tell whether the isometry (or the reparametrization) connects $a$ to the left endpoint $a_T$ or the other endpoint $b_T$, but since $|k[\gamma_T](a_T)| = |k[\gamma_T](b_T)|$ this does not make a difference. In this context we also use that the isometry can only change the sign of $k[\gamma]$.
    Secondly, we obtain with the same arguments
    \begin{align}
        |k[{\gamma}](a)| & = \frac{1}{a_2} |k[\gamma_H](a_H)| = \frac{1}{a_2} 2 \sqrt{1- m_H \sin^2(-\tfrac{\pi}{4})}
        = \frac{1}{a_2} \sqrt{2} \sqrt{2-m_H}.
    \end{align}
    By the continuity assumption on $k[\gamma]$ we obtain 
    $$\frac{\sqrt{2}}{a_1}\sqrt{2m_T-1} = \frac{\sqrt{2}}{a_2}\sqrt{2-m_H},$$
    which proves the claim.
\end{proof}

Having determined the rescaling ratio we will show that this ``drop-heart''-type combination does not yield a solution of \eqref{eq:pertfunc}. We will argue that each combination with the above rescaling ratio must have more than one point of self-intersection. This will contradict Proposition \ref{prop:selfinters} (
see Figure \ref{fig:fig3}).

\begin{figure}[ht]
    \centering
    \begin{subfigure}[b]
    {0.4\textwidth}
    \centering
    \includegraphics[height=45mm]{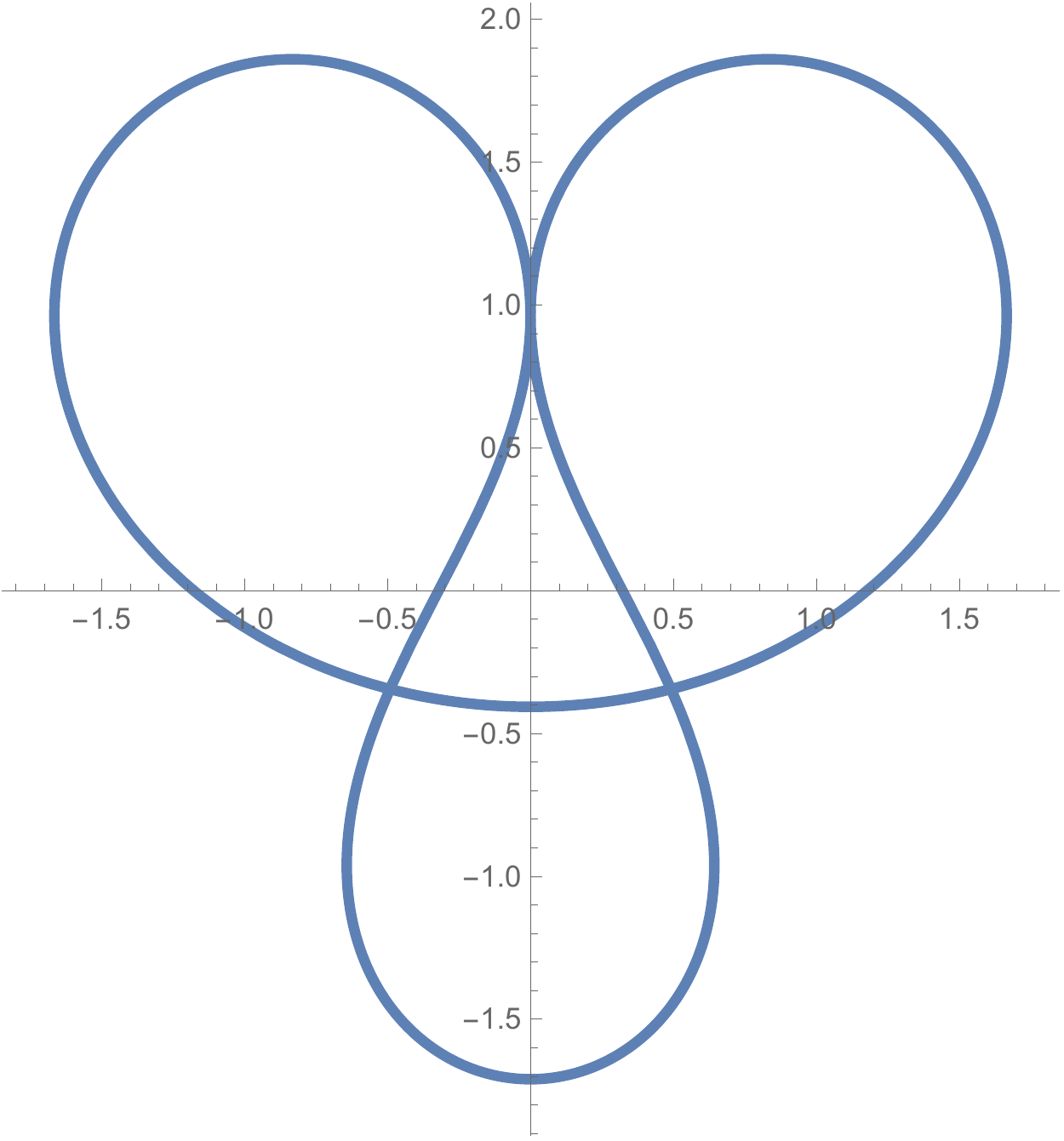}
    \end{subfigure}
    \hfill
    \begin{subfigure}[b]
    {0.55\textwidth}
    \centering
    \includegraphics[height=45mm]{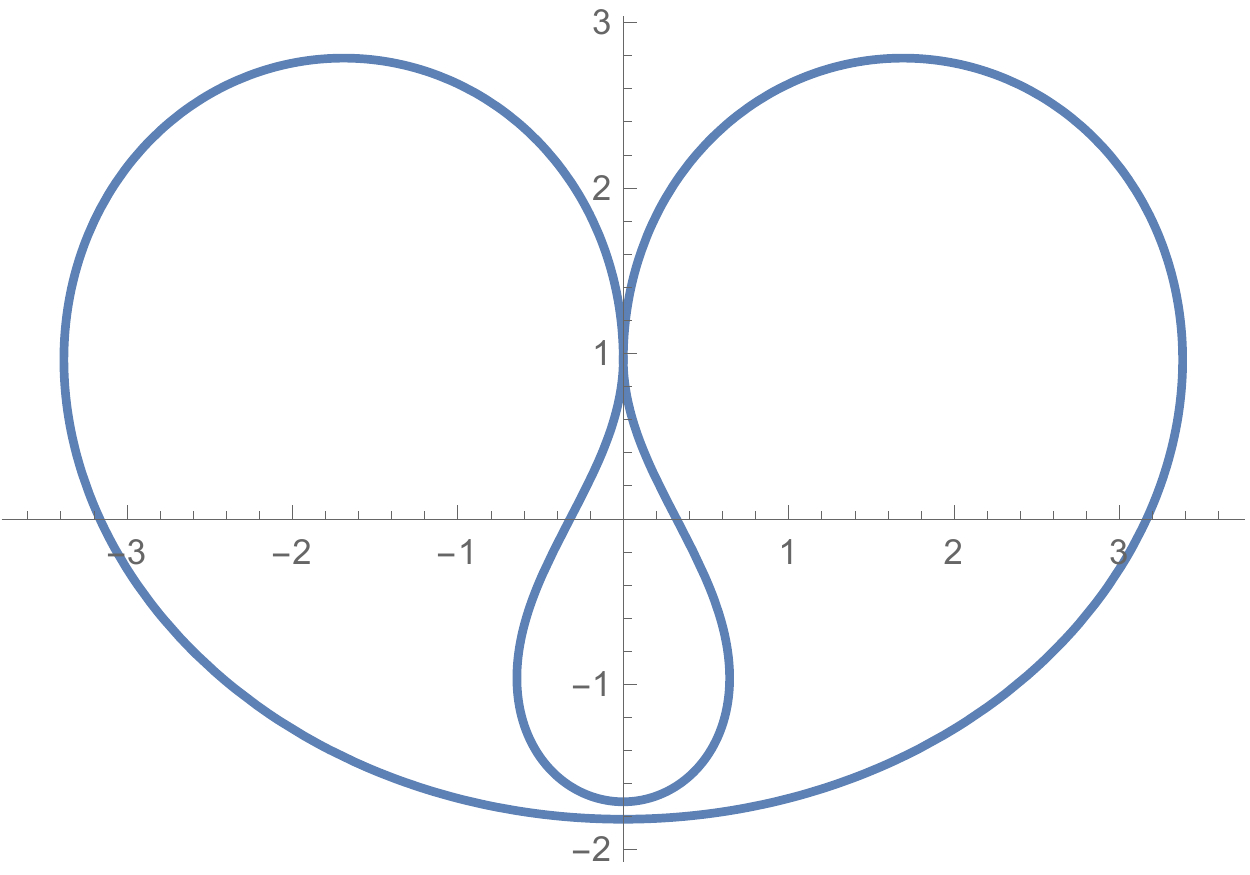}
    \end{subfigure}
    \caption{Plots of two different concatenations of $\gamma_T$ and $\gamma_H$. The rescaling ratio found in Lemma \ref{lem:rescaling} will yield (exactly)  the left hand figure --- which has more self-intersections than predicted in Proposition \ref{prop:selfinters}.}
    \label{fig:fig3}
\end{figure}

\begin{lemma}\label{lem:230}
    There exists no solution $\gamma \in \mathcal{A}_0$ of \eqref{eq:pertfunc} that is composed of one teardrop elastica and one heart-shaped elastica.
\end{lemma}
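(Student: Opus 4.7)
Suppose for contradiction that $\gamma \in \mathcal{A}_0$ is a solution to \eqref{eq:pertfunc} of the form $\gamma = S_1(a_1 \gamma_T) \oplus S_2(a_2 \gamma_H)$ for some scale factors $a_1,a_2>0$ and Euclidean isometries $S_1,S_2$. By Lemma \ref{lem:optireg} the signed curvature $k[\gamma]$ is continuous on $\mathbb{T}^1$, so Lemma \ref{lem:rescaling} pins down the rescaling ratio $a_2/a_1 = \sqrt{(2-m_H)/(2m_T-1)}$. By Proposition \ref{prop:selfinters}, $\gamma$ has exactly one self-intersection, located at the cusp $p$ where the two pieces are joined with antiparallel tangents and no other point of $\mathbb{T}^1$ is mapped to $p$. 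The plan is to contradict this uniqueness by exhibiting a second self-intersection of the composite, so forcing $\gamma \notin \mathcal{A}_0$.

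To set up the geometry, normalize $a_1=1$, place the cusp $p$ at the origin, and rotate so that the common tangent line at $p$ is the $y$-axis. Using the reflection symmetries \eqref{eq:gammaT gammH symmetry} and \eqref{eq:orbitantiper}, one may align the intrinsic axes of symmetry of both $a_1\gamma_T$ and $a_2\gamma_H$ with the $y$-axis. After this, the only freedom left in $S_1,S_2$ is the choice of whether the heart's lobes open into the same half-plane as the teardrop's tip or into the opposite half-plane. In each of these two configurations the composite is completely determined in the plane, so the question of whether it is an element of $\mathcal{A}_0$ with the single self-intersection $p$ becomes an explicit geometric check.

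The main step is to verify, in both configurations, that this check fails --- as visualized by the left panel of Figure \ref{fig:fig3}. Using the explicit parametrizations \eqref{eq:wavelikeparabulg} and \eqref{eq:explorbit}, together with the monotonicity of $G(\cdot,m_T)$ from Lemma \ref{lem:B2} and of $g$ from Proposition \ref{prop:gdecr}, I would compare landmark quantities of the two loops: the axial distance from the cusp to the opposite tip (at $x=0$ for $\gamma_T$ and at $x=\pi/2$ for $\gamma_H$), the widest horizontal extent of each loop, and the common opening rate at the cusp dictated by the continuous curvature. An intermediate value argument applied to the difference of the first coordinates of the two pieces (along each loop) then produces a parameter pair at which the pieces coincide away from $p$, giving the additional self-intersection and the desired contradiction with Proposition \ref{prop:selfinters}.

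The main obstacle is that $m_T$, $m_H$, and the rescaling factor are defined via transcendental equations involving complete and incomplete elliptic integrals, so the comparison cannot be closed by a purely symbolic monotonicity argument. The cleanest route is probably to combine qualitative sign-change information (analogous to Lemma \ref{lem:B2} and Proposition \ref{prop:gdecr}, but now applied to the explicit loop trajectories rather than only to their first coordinates) with a small number of certified numerical bounds on $E$, $F$, $K$ evaluated at $m_T$ and $m_H$, tight enough to rule out both remaining configurations and to exhibit the extra crossing in each.
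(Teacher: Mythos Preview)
Your overall strategy --- normalize the configuration, then exhibit a second self-intersection and contradict Proposition~\ref{prop:selfinters} --- is exactly what the paper does. However, two concrete points are missing.

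First, you overcount configurations. You say the only remaining freedom is ``whether the heart's lobes open into the same half-plane as the teardrop's tip or into the opposite half-plane''. In fact the constraint $N[\gamma]=1$ removes this freedom. Writing $S_2x=\widetilde{S}x+v$ with $\widetilde{S}\in O_2(\mathbb{R})$, one computes $\int_{\gamma_T}k\,\mathrm{d}s=\pi$ and $\int_{\gamma_H}k\,\mathrm{d}s=3\pi$, so $\pm 2\pi=\pi+\det(\widetilde{S})\cdot 3\pi$ forces $\det(\widetilde{S})=-1$. Together with the tangent matching $T_{\gamma_T}(b_T)=\widetilde{S}\,T_{\gamma_H}(a_H)$ this determines $\widetilde{S}=\begin{pmatrix}1&0\\0&-1\end{pmatrix}$ uniquely, and then $v$ is fixed by endpoint matching. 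There is a single configuration to analyze, not two.

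Second, and more seriously, your final paragraph is not a proof but a plan: you acknowledge that the comparison ``cannot be closed by a purely symbolic monotonicity argument'' and propose to rely on ``certified numerical bounds'' without producing any. The paper does close the argument analytically. After representing the relevant arcs as graphs $(u_T(y),y)$ and $(\tilde u_H(y),y)$ over a common vertical interval and reducing (via the intermediate value theorem) to a single scalar inequality, everything collapses to
\[
\sqrt{2}\sqrt{2m_T-1}\;-\;\sqrt{\tfrac{2-m_H}{2m_T-1}}\cdot\frac{1}{\sqrt{1-m_H}+\sqrt{1-\tfrac{m_H}{2}}}\;>\;-2\sqrt{m_T},
\]
which, after the elementary bound $\sqrt{1-m_H}\geq 0$, becomes $\sqrt{2}\,\frac{2m_T-2}{\sqrt{2m_T-1}}>-2\sqrt{m_T}$. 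This is an inequality in $m_T$ alone, and it follows from the analytic estimate $m_T>\tfrac{2}{3}$ (proved separately in Lemma~\ref{lem:m_T}). No numerical certification of elliptic integrals is needed; the transcendental parameters $m_T,m_H$ enter only through elementary algebraic expressions, and the single nontrivial input is $m_T>\tfrac{2}{3}$.
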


\begin{proof}
    Assume that $\gamma = S_1(a_1 \gamma_T) \oplus S_2(a_2 \gamma_H)$ solves \eqref{eq:pertfunc}, where $S_1,S_2: \mathbb{R}^2 \rightarrow \mathbb{R}^2$ are isometries and $a_1,a_2> 0$ are rescaling factors.
    The proof will be divided in two major steps.
    
    \textbf{Step 1:} We first determine the parameters we introduced more accurately. 
    Up to isometries and rescalings we may assume that $S_1 = \mathrm{id}$  and $a_1 = 1$, which implies that $a_2 = \sqrt{\frac{2-m_H}{2m_T-1}}$ by the previous lemma. 
    After those reductions we find that there exists an isometry $S: \mathbb{R}^2 \rightarrow \mathbb{R}^2$ such that 
    $\gamma =  \gamma_T \oplus S (a_2 \gamma_H)$.
  We observe that $Sx = \widetilde{S}x + v$ for some $v \in \mathbb{R}^2$ and some $\widetilde{S} \in O_2( \mathbb{R})$ satisfying $\mathrm{det}(\widetilde{S}) = - 1$, where the determinant formula holds true since  $N[\gamma] = 1 $ implies
    \begin{equation}
       \pm 2\pi = \int_\gamma k \; \mathrm{d}s = \int_{\gamma_T} k[\gamma_T] \; \mathrm{d}s + \mathrm{det}(\widetilde{S}) \int_{\gamma_H} k[\gamma_H] \; \mathrm{d}s =  (\pi + \mathrm{det}(\widetilde{S}) 3 \pi ).  
    \end{equation}
    Since $\gamma_T'(b_T)  = (0,\sqrt{2})$, $\gamma_H'(a_H) = (0,-\frac{2}{\sqrt{1- \frac{m_H}{2}}})$, and $ T_{\gamma_T}(b_T) = \widetilde{S} T_{\gamma_H}(a_H)$, and since $\gamma \in C^1(\mathbb{T}^1; \mathbb{R}^2)$, we obtain $\widetilde{S}(0,-1) = (0,1)$. Since $\widetilde{S} \in O_2(\mathbb{R})$ we obtain also $\widetilde{S}(1,0) =  \pm (1,0)$ and since $\mathrm{det}(\widetilde{S}) = -1$ we infer that $\widetilde{S} (1,0) = (1,0)$. Hence
    \begin{equation}
        \widetilde{S} = \begin{pmatrix} 1 & 0 \\ 0 & -1 \end{pmatrix}.
    \end{equation}
    We infer that
    $\gamma = \gamma_T \oplus [\widetilde{S} (a_2\gamma_H)+v] $, where $a_2$ and $\widetilde{S}$ are as determined above and $v \in \mathbb{R}^2$ is a constant translation, which is determined by $v:= \gamma_T(a_T) - \widetilde{S} (a_2 \gamma_H)(b_H)$, so that the endpoints of both curves are the same.
     
    \textbf{Step 2:} We now prove that the above $\gamma= \gamma_T \oplus S(a_2 \gamma_H)$ has a self-intersection different from $\gamma_T(a_T)$ ($=S(a_2\gamma_H)(b_H)$). This would contradict the self-intersection properties in Proposition \ref{prop:selfinters} and thus complete the proof of Lemma \ref{lem:230}.
    
    
    By the explicit representation of $\gamma_T$ 
    with  \eqref{eq:wavelikeparabulg} 
    and representation \eqref{eq:2.7}, in particular by the second component being strictly decreasing on $(a_T,0)$, we find that $\gamma_T|_{[a_T,0]}$ can be represented by the graph $(u_T(y),y)$ of a continuous function $u_T:I_T\to\mathbb{R}$, where $I_T=[A_T,B_T]:=[\gamma_T^{(2)}(0),\gamma_T^{(2)}(a_T)]$, such that $u_T(A_H)=u_T(B_H)=0$ at the endpoints.
    By Lemma \ref{lem:B2} (v) and by the fact that $G'(x,m_T)>0$ around $x=0$ we deduce that $u_T<0$ on $(A_T,B_T)$.
On the other hand, also by looking at the explicit representation of $\gamma_H$ 
 with \eqref{eq:explorbit} and \eqref{eq:orbitfirst}, we deduce that $\gamma_H|_{[\frac{\pi}{2},\frac{3\pi}{4}]}$ can be represented by the graph $(u_H(y),y)$ of a continuous function $u_H:I_H\to\mathbb{R}$, where $I_H=[A_H,B_H]:=[\gamma_H^{(2)}(\frac{3\pi}{4}),\gamma_H^{(2)}(\frac{\pi}{2})]$, such that $u_H(A_H)<\gamma_H^{(1)}(\frac{\pi}{2})=u_H(B_H)$, where the first inequality follows by \eqref{eq:orbitantiper} and \eqref{eq:elaheart} with $x=\frac{\pi}{4}$.
Therefore, using the above expression of $S$ and the fact that  $v^{(1)}=-a_2\gamma_H^{(1)}(b_H)=-a_2\gamma_H^{(1)}(\frac{\pi}{2})$, 
following from \eqref{eq:orbitantiper} with $x=\frac{3\pi}{4}$ and \eqref{eq:elaheart}, we find that $S(a_2 \gamma_H)|_{[\frac{\pi}{2},\frac{3\pi}{4}]}$ is represented by $(\tilde{u}_H(y),y)$ with $\tilde{u}_H:\tilde{I}_H\to\mathbb{R}$ defined by $\tilde{I}_H=[\tilde{A}_H,\tilde{B}_H]:=[-a_2B_H+v^{(2)},-a_2A_H+v^{(2)}]$ and $\tilde{u}_H(y):=a_2\big(u_H(-\frac{y-v^{(2)}}{a_2})-u_H(B_H)\big)$.
In particular, $\tilde{u}_H(\tilde{A}_H)=0$ and $\tilde{u}_H(\tilde{B}_H)<0$.
Noting that by \eqref{eq:explorbit} one has $\gamma_H^{(2)}(\frac{3\pi}{4})=\gamma_H^{(2)}(\frac{5\pi}{4})=\gamma_H^{(2)}(b_H)$ 
 and recalling that $v$ is chosen so that $v^{(2)}=\gamma_T^{(2)}(a_T)-(\widetilde{S}(a_2\gamma_H))^{(2)}(b_H)=\gamma_T^{(2)}(a_T)+a_2\gamma_H^{(2)}(b_H)$, we deduce that $B_T=\tilde{B}_H$.

Now for the desired self-intersection property, in view of the intermediate value theorem for $u_T-\tilde{u}_H$, it is sufficient to prove that $\tilde{A}_H>A_T$, namely
\begin{equation}\label{eq:IVTgoal}
    -a_2\gamma_H^{(2)}(\tfrac{\pi}{2})+\gamma_T^{(2)}(a_T)+a_2\gamma_H^{(2)}(b_H) > \gamma_T^{(2)}(0).
\end{equation}
By direct computations using {\eqref{eq:def alpha} and} \eqref{eq:wavelikeparabulg} we have $\gamma_T^{(2)}(0)=-2\sqrt{m_T}$ and $\gamma_T^{(2)}(a_T) = -2 \sqrt{m_T} \cos \left( \pi - \arcsin \sqrt{ \frac{1}{2m_T}} \right) = \sqrt{2} \sqrt{2m_T -1}$, and by using \eqref{eq:explorbit} we also have $\gamma_H^{(2)}( b_H ) = \gamma_H^{(2)}(\frac{5\pi}{4}) = -\frac{2}{m_H} \sqrt{1- \frac{m_H}{2}}$ and $\gamma_H^{(2)}(\frac{\pi}{2}) =  -\frac{2}{m_H}\sqrt{1-m_H}$.
Therefore, also by using $a_2=\sqrt{\frac{2-m_H}{2m_T-1}}$, we find that \eqref{eq:IVTgoal} is equivalent to
\begin{align}
    Y &:= \sqrt{2}\sqrt{2m_T-1} + 2 \sqrt{\frac{2-m_H}{2m_T-1}} \left( \frac{1}{m_H} \sqrt{1- m_H} - \frac{1}{m_H} \sqrt{1- \frac{m_H}{2}} \right) \\
    &> -2\sqrt{m_T}.
\end{align}
This follows by 
     \begin{align*}
     Y &=\sqrt{2}\sqrt{2m_T-1} -  \sqrt{\frac{2-m_H}{2m_T-1}} \frac{1}{\sqrt{1-m_H}+ \sqrt{1- \frac{m_H}{2}}}
     \\ &\geq \sqrt{2}\sqrt{2m_T- 1} -  \sqrt{\frac{2-m_H}{2m_T-1}} \frac{1}{\sqrt{1-\frac{m_H}{2}}} = \sqrt{2} \frac{2m_T-2}{\sqrt{2m_T-1}} > - 2 \sqrt{m_T},
     \end{align*}
     where the last inequality follows by elementary computations with the analytic estimate $m_T>\frac{2}{3}$ independently proved in Lemma \ref{lem:m_T}.
     Hence we obtain the desired contradiction to the self-intersection properties in Proposition \ref{prop:selfinters}.
\end{proof}

Finally, we examine combinations of two teardrop elasticae. The remaining task here is to determine all the scalings and isometries that may yield solutions of \eqref{eq:pertfunc}.
Since existence of minimizers is already ensured by Proposition \ref{prop:existence}, we know that there must be at least one configuration that yields a solution. 

\begin{lemma}\label{lem:228}
    Suppose that $\gamma \in \mathcal{A}_0$ is a solution to \eqref{eq:pertfunc} composed of two teardrop elasticae. Then $\gamma = \gamma_{2T}$ (up to rescaling, reparametrization and isometries).  
\end{lemma}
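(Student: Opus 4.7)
The plan is to exhaust the remaining freedom (two scaling factors, two isometries, two reparameterizations) using the position, tangent, and curvature matching conditions at the unique self-intersection point $p$. By the scaling, Euclidean, and parameter invariances, together with the symmetry \eqref{eq:gammaT gammH symmetry} that lets us absorb orientation-reversing reparameterizations into isometries, we may reduce to
\begin{equation*}
    \gamma = \gamma_T \oplus (a R \gamma_T + t)
\end{equation*}
on $\mathbb{T}^1$, for some $a>0$, some $R \in O(2)$, and some $t\in \mathbb{R}^2$. Writing $p := \gamma_T(a_T) = \gamma_T(b_T)$ for the cusp of $\gamma_T$, the requirement that the second piece close up at $p$ forces $t = p - aRp$, i.e.\ the isometry $S_2 : x \mapsto R x + t$ fixes $p$.

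Next I would apply Lemma \ref{lem:optireg}, by which $\gamma$ admits a $W^{3,\infty}$ reparameterization, so that both $T_\gamma$ and $k[\gamma]$ extend continuously across the gluing points. A direct computation from \eqref{eq:wavelikeparabulg} gives $T_{\gamma_T}(b_T) = (0,1) = -T_{\gamma_T}(a_T)$ and
\begin{equation*}
    \kappa_0 := k[\gamma_T](a_T) = k[\gamma_T](b_T) = -\sqrt{2}\sqrt{2m_T-1} \neq 0,
\end{equation*}
while the scaling/isometry behaviour of the signed curvature yields $k[aR\gamma_T + t] = (\det R / a)\, k[\gamma_T]$. Tangent continuity at one of the gluing points then reads $R\, T_{\gamma_T}(a_T) = T_{\gamma_T}(b_T)$, i.e.\ $R(0,1) = (0,-1)$, and curvature continuity reads $\kappa_0 = (\det R / a)\, \kappa_0$, i.e.\ $a = \det R$. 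Since $a > 0$ this forces $\det R = +1$ and $a = 1$, and the only $R \in SO(2)$ sending $(0,1)$ to $(0,-1)$ is $R = -I$.

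Consequently $S_2 x = 2p - x$ is the point-reflection through $p$, and $\gamma = \gamma_T \oplus (2p - \gamma_T)$, which up to a shift of parameter coincides with $\gamma_{2T}$ of Definition \ref{def:two_teardrop}. As a consistency check, the two pieces have matching tangents and curvature at $p$ and contribute $\pi$ each to the total signed curvature, giving $N[\gamma] = 1$, so that indeed $\gamma \in \mathcal{A}_0$. In my view the main delicacy lies in the reductive step --- carefully absorbing the reparameterizations $\Phi_1, \Phi_2$ into isometries via \eqref{eq:gammaT gammH symmetry} and bookkeeping the orientation of each teardrop piece --- whereas once this is set up, the matching conditions pin down $R$, $a$, and $t$ essentially without further effort.
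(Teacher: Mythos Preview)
Your proof is correct and follows essentially the same route as the paper: after normalizing the first teardrop, you use $C^1$-matching of tangents at the self-intersection together with the curvature continuity supplied by Lemma~\ref{lem:optireg} (and the explicit nonzero value $k[\gamma_T](a_T)=k[\gamma_T](b_T)=-\sqrt{2}\sqrt{2m_T-1}$) to pin down $R=-I$ and $a=1$. The only cosmetic difference is that the paper first restricts $R$ to $\begin{pmatrix}\pm1&0\\0&-1\end{pmatrix}$ via the tangent condition and then uses curvature to fix the sign, whereas you reverse the order; also, the paper notes that the final consistency check $\gamma_{2T}\in\mathcal{A}_0$ is unnecessary since existence of a minimizer is already guaranteed by Proposition~\ref{prop:existence}.
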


\begin{proof}
Suppose that $\gamma = [S_1 (a_1 \gamma_T)] \oplus [S_2 (a_2 \gamma_T)]$ solves \eqref{eq:pertfunc}. Up to isometries and rescaling we may assume that $S_1 = \mathrm{id}$ and $a_1 = 1$.
We need to show  that also $a_2 = 1$ and  $S_2 = v - \mathrm{id}$ for some translation vector $v \in \mathbb{R}^2$ (which is uniquely determined by the condition $\gamma_T(a_T) = S_2(a_2\gamma_T(b_T))$).
Let $a,b \in \mathbb{T}^1$ be such that $\gamma(a) = \gamma(b)$.
First notice that $S_2x = \widetilde{S}x + v$ for some $\widetilde{S} \in O_2(\mathbb{R})$ and $v \in \mathbb{R}^2$.
Comparing tangent vectors at the endpoints as in Step 1 of the proof of Lemma \ref{lem:230}, we deduce that 
\begin{equation}\label{eq:matrxA}
    \widetilde{S} = \begin{pmatrix} \pm 1 & 0 \\ 0 & - 1 \end{pmatrix}.
\end{equation}
To determine the sign of the first entry we observe by Lemma \ref{lem:optireg}
\begin{equation}\label{eq:compaarecurv}
    k[\gamma_T](b_T) = k[\gamma](b) = k[S_2(a_2\gamma_T)](a_T) = \frac{\mathrm{det}(\widetilde{S})}{a_2} k [\gamma_T](a_T). 
\end{equation}
An easy computation using Propositions \ref{prop:elaclassi} and \ref{prop:monotonequantity} reveals that 
$$k[\gamma_T](a_T) = k[\gamma_T](b_T)=-  \sqrt{2}\sqrt{2m_T-1}\neq0,$$
whereupon \eqref{eq:compaarecurv} yields $\frac{\mathrm{\det}(\widetilde{S})}{a_2} = 1$. As $a_2>0$ and $|\mathrm{det}(\widetilde{S})|= 1$ we obtain $a_2=1$ and $\mathrm{det}(\widetilde{S})= 1$, so that $\widetilde{S}=-\mathrm{id}$, cf.\ \eqref{eq:matrxA}.
In particular, also $S_2 = v - \mathrm{id}$ and it follows that $\gamma= \gamma_{2T}$. Now one would actually have to compute that $\gamma_{2T} \in \mathcal{A}_0$ (e.g.\ $N[\gamma_{2T}] = 1$ and $\gamma_{2T}$ solves \eqref{eq:pertfunc}). This however is not needed since existence of a solution to \eqref{eq:pertfunc} is already ensured by Proposition \ref{prop:existence} and $\gamma_{2T}$ is now (up to invariances) the only candidate.
\end{proof}

 \begin{proof}[Proof of Theorem \ref{thm:1.4}]
 We have shown in Proposition \ref{prop:existence} that a minimizer $\gamma_0 \in \mathcal{A}_0$ exists.  
 We have then formulated the variational inequality \eqref{eq:pertfunc} as a necessary criterion for a minimizer. 
From  Proposition \ref{prop:selfinters} we conclude that each solution of the variational inequality must be composed of exactly two ECEs (cf.\ Definition \ref{def:selftouchdrop}), all of which we have classified in Section \ref{sec:ECE}.
By Lemma \ref{lem:227}, Lemma \ref{lem:230}, and Lemma \ref{lem:228} only a two-teardrop can yield a solution of \eqref{eq:pertfunc}. Since existence is already ensured, we obtain that each two-teardrop must be a minimizer. The claim follows by definition of $C_{2T}$.
\end{proof}

We finally give a remark on the classification of solutions to \eqref{eq:pertfunc} and their stability.

\begin{remark}\label{rem:classification}
If a self-intersecting curve $\gamma\in H^2_{imm}(\mathbb{T}^1;\mathbb{R}^2)$ has $N[\gamma]\neq1$ and solves \eqref{eq:pertfunc}, then $\gamma$ must be an elastica.
Indeed, if $N\neq1$, then any local perturbation keeps the value of $N$ and thus retains a self-intersection by Hopf's Umlaufsatz (cf.\ \cite[Lemma A.5]{LiYau1}), so that any solution to \eqref{eq:pertfunc} must be globally an elastica.
The known classification of closed planar elasticae (
see e.g. \cite[Theorem 0.1 and Corollary p. 87]{LangerSingerMinmax}) implies that for $N=0$ any solution to \eqref{eq:pertfunc} must be a figure-eight elastica (stable) or its multiple covering (unstable), and for $N=\nu\geq2$ a $\nu$-fold circle (stable), where the (in)stability means that the curve is a local minimizer (or not) in the $H^2$-topology.
Therefore, by Theorem \ref{thm:1.4}, we completely classify all possible solutions to the variational inequality \eqref{eq:pertfunc} and their stability among self-intersecting planar closed curves.
\end{remark}

\section{Consequences for the elastic flows}\label{sect:elasticflows}

In this section, we will prove that the energy threshold for preservation of embeddedness in Theorem \ref{thm:main} is sharp, i.e.\ for any larger energy threshold we will construct an initially embedded curve which develops self-intersections in finite time.

Our main ingredient is the smooth dependence of the elastic flow on the initial datum.

\subsection{Well-posedness of the flows}

    We have the following well-posedness result for the elastic flow of smooth curves, see also \cite[Theorem 2.1]{Blatt} for a general result in codimension one.
    
    \begin{theorem}\label{thm:smooth dependence flow}
    Let $C^\infty_ {imm}(\mathbb{T}^1;\mathbb{R}^n)$ denote the space of smoothly immersed curves and let $n\geq 2$. Then for each $\gamma_0\in C^\infty_ {imm}(\mathbb{T}^1;\mathbb{R}^n)$ there exists a unique solution $\gamma \in C^\infty(\mathbb{T}^1\times[0,\infty);\mathbb{R}^n)$ of the elastic flow \eqref{eq:elasticflow} with either $\lambda>0$ fixed or $\lambda$ given by \eqref{eq:def lambda}. Moreover, the map $C^\infty_ {imm}(\mathbb{T}^1;\mathbb{R}^n)\times[0,\infty)\mapsto C^\infty_ {imm}(\mathbb{T}^1;\mathbb{R}^n), (\gamma_0, t)\mapsto \gamma(\cdot,t)$ is smooth.
    \end{theorem}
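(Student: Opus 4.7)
The plan is to reduce the geometric evolution equation \eqref{eq:elasticflow} to a strictly parabolic quasilinear system, then invoke classical parabolic theory for short-time existence, uniqueness, smoothness and smooth dependence on data, and finally upgrade to global existence via the known a priori estimates for the elastic flow. Since \eqref{eq:elasticflow} is only degenerate parabolic (the principal symbol annihilates tangential directions due to reparametrization invariance), the first step is to fix a gauge. I would use the DeTurck-type trick of adding a tangential field $\tau(\gamma) = \varphi[\gamma]\,T$ with $\varphi$ chosen so that the modified equation $\partial_t \gamma = -2\nabla_s^2\kappa - |\kappa|^2\kappa + \lambda \kappa + \tau(\gamma)$ becomes strictly parabolic (e.g.\ the classical choice making the highest-order part $-2\partial_s^4\gamma$). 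Solutions of this modified equation differ from solutions of \eqref{eq:elasticflow} by a smooth family of diffeomorphisms of $\mathbb{T}^1$, so existence, uniqueness, smoothness and continuous dependence for the gauge-fixed equation transfer to \eqref{eq:elasticflow}.

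For the fixed-$\lambda$ case, the gauge-fixed system is a quasilinear parabolic PDE of order four with smooth coefficients of the $\gamma$-dependence; standard results (for example Lunardi-type maximal regularity on little Hölder spaces $h^{4+\alpha}(\mathbb{T}^1;\mathbb{R}^n)$, or Solonnikov–Eidel'man-type estimates on anisotropic Sobolev spaces) yield a unique short-time solution $\gamma \in C^\infty(\mathbb{T}^1 \times [0,T);\mathbb{R}^n)$, smoothness coming from the standard parabolic bootstrap (time derivatives of the equation plus elliptic regularity). The smooth dependence of $(\gamma_0,t)\mapsto\gamma(\cdot,t)$ follows by viewing the flow as the zero-set of a smooth map between Banach spaces and applying the implicit function theorem to the linearized parabolic operator, which is invertible by the same maximal regularity theory. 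The length-constrained case requires the extra step of showing that the non-local multiplier $\lambda[\gamma]$ from \eqref{eq:def lambda} depends smoothly on $\gamma$ in the relevant function spaces and has uniformly bounded derivatives on bounded subsets; since $\int |\kappa|^2\,\mathrm{d}s > 0$ is preserved (the flow never reaches a line) and all factors are smooth quasilinear expressions in $\gamma$ and its arclength derivatives, the implicit-function-theorem argument goes through verbatim after absorbing the $\lambda\kappa$ term into the lower-order part.

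Global existence is the content of the literature cited in the introduction (\cite{DKS,DPS16,MantegazzaPozzetta,Length Preserving} and the survey \cite{MPP}): one combines the energy monotonicity $\tfrac{d}{dt}E_\lambda[\gamma]\leq 0$ (or $\tfrac{d}{dt}\bar{B}[\gamma]\leq 0$ under fixed length) with higher-order interpolation estimates of Dziuk–Kuwert–Schätzle type to obtain uniform bounds on $\int |\nabla_s^k \kappa|^2\,\mathrm{d}s$ on any finite time interval, together with a uniform lower bound on $|\partial_x\gamma|$ after a suitable reparametrization. These estimates rule out both curvature blow-up and the degeneration of immersion, so the short-time solution extends to $[0,\infty)$ and stays smooth. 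Concatenating the smooth dependence on $[0,T]$ for arbitrary $T$ yields the smoothness of the map $(\gamma_0,t)\mapsto \gamma(\cdot,t)$ on $C^\infty_{imm}(\mathbb{T}^1;\mathbb{R}^n)\times[0,\infty)$ (where smoothness on the Fréchet space $C^\infty_{imm}$ is interpreted as continuity of each finite-order Fréchet derivative between the appropriate $C^k$-spaces, which follows from the Banach-space smoothness at each regularity level).

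The main obstacle in packaging this is purely bookkeeping: one must set up function spaces in which both cases of $\lambda$ can be treated uniformly, verify that the nonlinear parabolic operator is smooth as a map between them, and track how the diffeomorphism used in the DeTurck trick depends smoothly on $\gamma_0$ so that pulling back the gauge-fixed solution yields the smooth dependence claimed for \eqref{eq:elasticflow} itself. For codimension one this is done in detail in \cite[Theorem 2.1]{Blatt}, whose proof adapts line by line to arbitrary codimension since the curvature vector $\kappa$ and the normal connection $\nabla_s$ already accommodate higher-codimensional geometry; the energy estimates and interpolation inequalities used for the global step are likewise dimension-blind.
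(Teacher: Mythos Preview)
Your proposal is correct and follows essentially the same strategy the paper indicates: the paper does not actually prove this theorem but only sketches the approach, noting that one prescribes a tangential motion to convert \eqref{eq:elasticflow} into a quasilinear parabolic system (your DeTurck trick), solves that by standard methods, handles the length-preserving Lagrange multiplier as a lower-order term after integration by parts, and imports long-time existence from \cite{DKS} and related references. Your write-up is in fact considerably more detailed than the paper's own remark.
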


We will not prove Theorem \ref{thm:smooth dependence flow} here, but we remark that a way to obtain the relevant well-posedness for small times is already roughly sketched in \cite{DKS}, where also long-time existence is proven. The idea is to prescribe an explicit tangential motion for the flow which transforms the initial value problem of the elastic flow into a quasilinear parabolic system. That system can then be solved by standard methods, after observing that the Lagrange multiplier (in the length-preserving case) is only of third order after integration by parts, see \cite{EscherSimonett} for a related result. Moreover, for general geometric flows, a local well-posedness result has been proven in \cite{HuiskenPolden} and \cite{MantegazzaMartinazzi}. However, these results do not cover the case of general Lagrange multipliers or of codimension larger than one. 

\begin{remark}\label{rem:3.2}
In the case of non-smooth initial data, it is still possible to find (unique) solutions to suitable weak formulations of the elastic flow, cf. \cite{OkabePozziWheeler,OkabeWheeler,Length Preserving,BVH21}. As long as these flows possess spatial $H^2$-regularity at any time and decrease the bending energy $\bar{B}$ (respectively $E_\lambda$), we may apply Theorems \ref{thm:figure-eight} and \ref{thm:1.4} in order to conclude embeddedness.
\end{remark}

\subsection{Optimality of the threshold in codimension one}
We follow the ideas in \cite{Blatt} and construct a family of embeddings converging to 
an immersion with a tangential self-intersection. At this self-intersection, our example will have velocities pointing towards each other, which makes the self-intersection attractive for the flow. This is achieved by stacking the graph of
\begin{align}\label{eq:def u alpha}
    u_\alpha(x):= x^4+\alpha
\end{align}
on top of $\operatorname{graph}(-u_\alpha)$ for $\alpha>0$. For both $n=2$ and $n\geq 3$, we will perturb a suitable minimal shape, see Figures \ref{fig:perturbation_two_drop} and \ref{fig:perturbation_figure_eight} below for an illustration of the idea.  

In the case of codimension one, we will perturb an elastic two-teardrop $\gamma_{2T}$ 
(as in Definition \ref{def:two_teardrop}). 
The reason why we cannot directly work with $\gamma_{2T}$ is that it will immediately become embedded under an elastic flow --- in fact this follows from Theorem \ref{thm:1.4}, Remark \ref{rem:3.2}, the energy decay and the classification of closed elasticae. Geometrically, this means that the elastic flow pulls the self-intersection of $\gamma_{2T}$ apart. In contrast to that, the two arcs of the self-intersection of the perturbed curve $\eta_0$ in Lemma \ref{lem:existence counterexample n=2} below will be pulled towards each other.
By Definition \ref{def:teardropelastica}  and \eqref{eq:gammaT gammH symmetry}, after reparametrization and rotation we may assume that the elastic two-teardrop is given by $\gamma_{2T}^*\colon\mathbb{T}^1\to\mathbb{R}^2$ with $\gamma_{2T}^*(0)=\gamma_{2T}^*(\frac{1}{2})=0$, $T_{\gamma_{2T}^*}(0)=-T_{\gamma_{2T}^*}(\frac{1}{2})=e_1$ and satisfies the symmetry property
\begin{align}\label{eq:twodrop symmetry}
    \gamma_{2T}^*(x)=R\gamma_{2T}^*\Big(\frac{1}{2}-x\Big), \quad \text{for all }x\in \mathbb{T}^1,
\end{align}
where $R\in O_2(\mathbb{R})$ is the reflection across the $e_1$-axis, i.e.\ $R(u,v)=(-u,v)$ for $(u,v)\in \mathbb{R}^2$.

Moreover, for any curve $\gamma\colon \mathbb{T}^1\to\mathbb{R}^n$ we define the \emph{velocity field} for the elastic flow by
\begin{equation}
    V[\gamma] := -2 \nabla_s^2\kappa - |\kappa|^2\kappa+ \lambda \kappa,
\end{equation}
where either $\lambda>0$ is a fixed number or $\lambda = \lambda[\gamma]$ is given by \eqref{eq:def lambda}. With this notation, the elastic flow equation \eqref{eq:elasticflow} can be written as $\partial_t \gamma(\cdot,t) = V[\gamma(\cdot,t)]$ for all $t>0$.

\begin{lemma}\label{lem:existence counterexample n=2}
    Let $\varepsilon>0$. There exists a family of smooth curves $(\eta_\alpha)_{\alpha\in [0,1]} \subset  C^\infty_{imm}(\mathbb{T}^1;\mathbb{R}^2)$ such that
    \begin{enumerate}[label={\upshape(\roman*)}]
        \item $\bar B[\eta_\alpha] \leq C_{2T}+\varepsilon$ for all $\alpha\in [0,1]$;
        \item $\eta_\alpha$ is an embedding for all $\alpha\in (0,1]$;
        \item $\eta_\alpha\to \eta_0$ smoothly as $\alpha \searrow 0$;
        \item there exists $\rho>0$ such that we have $\eta_\alpha(x) = (x, \rho^2u_\alpha(x))$ and $\eta_\alpha(\frac{1}{2}-x)=(x, -\rho^2u_\alpha(x))$ for all $x\in [-\frac{\rho}{2},\frac{\rho}{2}]$.
        In particular, $\eta_0(0)=\eta_0(\frac{1}{2})=0$, $\eta_0^{(2)}(\pm \frac{\rho}{2}) >0$ and $ \eta^{(2)}_0(\frac{1}{2}\pm  \frac{\rho}{2}) <0$;
        \item $ V[\eta_0]^{(2)}(0)<0$ and $ V[\eta_0]^{(2)}(\frac{1}{2}) >0$;
        \item We have $\eta_\alpha(x) = R \eta_\alpha(\frac{1}{2}-x)$.
    \end{enumerate}
\end{lemma}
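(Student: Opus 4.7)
The idea is to build $\eta_\alpha$ as a smooth modification of the elastic two-teardrop $\gamma_{2T}^*$ which agrees with $\gamma_{2T}^*$ outside a $\rho$-neighborhood of the two cusp parameters $\{0,\tfrac12\}$ and equals the prescribed graphs $(x,\pm\rho^2 u_\alpha(x))$ on $|x|\leq\rho/2$; the energy bound (i) will then follow from $H^2$-continuity of $\bar B$ once one shows $\eta_\alpha^\rho\to\gamma_{2T}^*$ in $H^2(\mathbb{T}^1;\mathbb{R}^2)$ as $\rho\to 0$ uniformly in $\alpha\in[0,1]$. Here $\rho>0$ is a small scale parameter to be chosen at the end, and the family in $\alpha$ is obtained from $\eta_0$ by adding, via a cut-off, the rigid translations $\pm(0,\rho^2\alpha)$ in the inner region.

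\textbf{Construction.} Since $\gamma_{2T}^*$ is smooth on $\mathbb{T}^1\setminus\{0,\tfrac12\}$, is $W^{3,\infty}$ globally (Lemma \ref{lem:optireg}), and has $T_{\gamma_{2T}^*}(0)=e_1$, after a smooth local reparametrization we may represent $\gamma_{2T}^*$ near $0$ as a graph $(x,g(x))$ on $|x|\leq\delta$, where $g$ is smooth on each of $[-\delta,0]$ and $[0,\delta]$, $C^2$-joined at $0$, and $g(0)=g'(0)=0$. The reflection symmetry \eqref{eq:twodrop symmetry} forces the branch near $\tfrac12$ to be represented analogously by $(x,-g(x))$. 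Pick cut-offs $\psi,\chi\in C_c^\infty(\mathbb{R};[0,1])$ with $\psi\equiv\chi\equiv 1$ on $[-\tfrac12,\tfrac12]$ and $\psi\equiv\chi\equiv 0$ off $(-1,1)$. For small $\rho>0$ set
\[
    \tilde g_\rho(x):=\psi(x/\rho)\,\rho^2 x^4+(1-\psi(x/\rho))\,g(x) \qquad (|x|\leq\rho),
\]
and define $\eta_0$ to equal $(x,\tilde g_\rho(x))$ on $|x|\leq\rho$ near parameter $0$, to coincide with $\gamma_{2T}^*$ on the remaining arcs, and to be defined symmetrically near $\tfrac12$ using \eqref{eq:twodrop symmetry}; since $\tilde g_\rho$ equals $\rho^2 x^4$ on $|x|\leq\rho/2$ and equals $g$ at $|x|=\rho$, we obtain $\eta_0\in C^\infty_{imm}(\mathbb{T}^1;\mathbb{R}^2)$. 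Finally, set
\[
    \eta_\alpha(t):=\eta_0(t)+\alpha\rho^2\chi(t/\rho)\,e_2-\alpha\rho^2\chi((\tfrac12-t)/\rho)\,e_2,
\]
which is jointly smooth in $(t,\alpha)$ and, since $\chi=1$ on $[-\tfrac12,\tfrac12]$ and the two cut-off supports are disjoint for $\rho$ small, equals $(x,\rho^2 u_\alpha(x))$ on $|x|\leq\rho/2$ with the mirror formula on the symmetric arc. Properties (iii), (iv), and (vi) are thus immediate.

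\textbf{Verification and main obstacle.} For (v), on $|x|\leq\rho/2$ the inner branch $(x,\rho^2 x^4)$ has signed curvature $k(x)=12\rho^2 x^2(1+16\rho^4 x^6)^{-3/2}$, so $k(0)=0$ and $k''(0)=24\rho^2$; using the planar identity $\nabla_s^2\kappa=k''N$ together with $N(0)=e_2$ gives $V[\eta_0](0)=-48\rho^2 e_2$, hence $V[\eta_0]^{(2)}(0)<0$, and the symmetry (vi) forces $V[\eta_0]^{(2)}(\tfrac12)>0$. Property (ii) holds because for $\alpha>0$ the two inner graphs $(x,\pm\rho^2 u_\alpha(x))$ are separated by $2\rho^2\alpha>0$ along $e_2$, while in the transition annulus $\rho/2\leq|x|\leq\rho$ the perturbation of $\gamma_{2T}^*$ is $O(\rho^2)$ in $L^\infty$, too small to produce new crossings with the embedded outer arcs for $\rho$ sufficiently small. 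For (i), the cut-off construction yields
\[
    \|\eta_\alpha^\rho-\gamma_{2T}^*\|_{H^2(\mathbb{T}^1;\mathbb{R}^2)}\leq C\sqrt{\rho}
\]
uniformly in $\alpha\in[0,1]$: the amplitude of the perturbation on $|x|\leq\rho$ is $O(\rho^2)$, while its second derivative has $L^\infty$-size $O(1)$ (the cut-off derivatives $\psi'',\chi''$ scale like $\rho^{-2}$ and multiply quantities of order $\rho^2$) on an interval of length $O(\rho)$, giving an $L^2$-norm of order $\sqrt\rho$. By continuity of $\bar B$ on $H^2_{imm}$ we obtain $\sup_{\alpha\in[0,1]}\bar B[\eta_\alpha^\rho]\to C_{2T}$ as $\rho\to 0$, and choosing $\rho$ sufficiently small gives $\bar B[\eta_\alpha]\leq C_{2T}+\varepsilon$. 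The main technical difficulty is reconciling the \emph{exact} inner graph form required by (iv) with the $H^2$-smallness of the perturbation needed for (i); the cut-off $\psi$ achieves both because it matches the prescribed quartic graph exactly on $|x|\leq\rho/2$ while confining the $H^2$-error to the shrinking annulus $\rho/2\leq|x|\leq\rho$.
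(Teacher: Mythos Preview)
Your proof is correct and follows essentially the same approach as the paper: represent $\gamma_{2T}^*$ locally as a graph near the self-intersection, blend in the quartic profile via a cut-off supported on $|x|\leq\rho$, control the $H^2$-error by $O(\sqrt{\rho})$ using $g(0)=g'(0)=0$ (so that $g=O(x^2)$, $g'=O(x)$ and the rescaled cut-off derivatives balance), and compute the velocity at the origin from the vanishing of the first three derivatives of the second component. The only cosmetic difference is that the paper builds the $\alpha$-dependent graph $w_\alpha=(1-\psi(\cdot/\rho))v+\rho^2\psi(\cdot/\rho)u_\alpha$ in a single step, whereas you first construct $\eta_0$ and then superimpose the $\alpha$-translation via a second cut-off $\chi$; both yield the same family with the same estimates.
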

The shape of the curves $(\eta_\alpha)_{\alpha\in[0,1]}$ is illustrated in Figure \ref{fig:perturbation_two_drop}.
\begin{figure}
    \centering
    \includegraphics[width=0.7\textwidth]{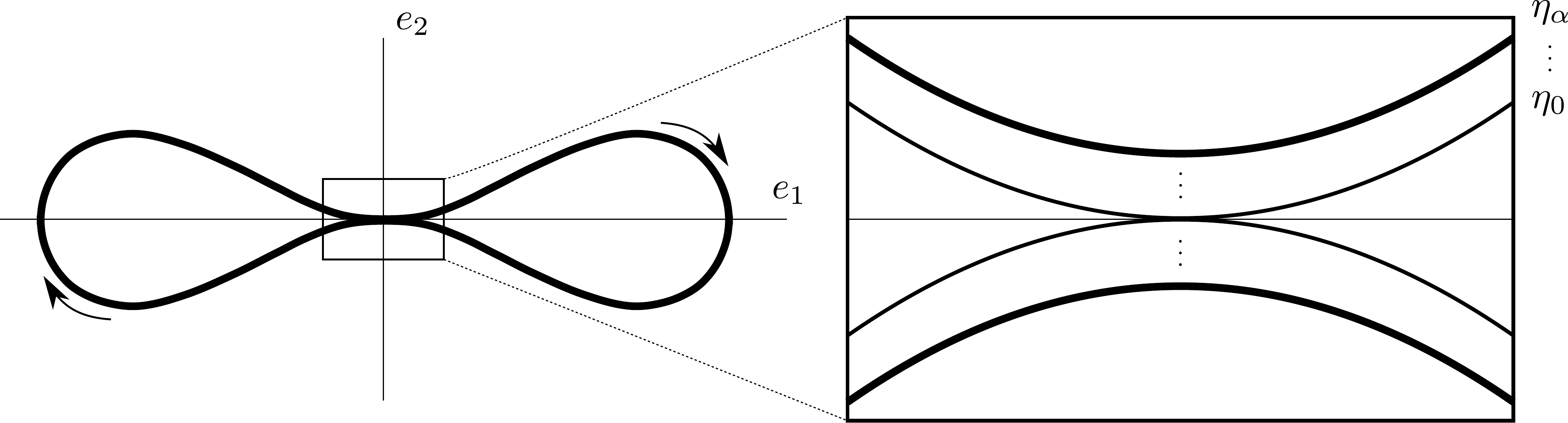}
    \caption{Perturbation of the elastic two-teardrop, cf.\ Lemma \ref{lem:existence counterexample n=2}.}
    \label{fig:perturbation_two_drop}
\end{figure}
\begin{proof}[Proof of Lemma \ref{lem:existence counterexample n=2}]
   Let $\varepsilon>0$ and let $\gamma_{2T}^*\in  H^{2}_{imm}(\mathbb{T}^1;\mathbb{R}^2)$ be as above.
After another appropriate reparametrization, we may assume that around the self-intersection point at $x=0$, the curve $\gamma_{2T}^*$ is locally given as the graph of a function $v\colon(-\rho_0, \rho_0)\to \mathbb{R}$ for $\rho_0>0$, which is smooth, except at the origin. Moreover, $v\in C^2((-\rho_0, \rho_0);\mathbb{R}^2)$ by Lemma \ref{lem:optireg} and satisfies  $v(0)=0$, $v'(0)=0$ and $v(x)=v(-x)$ as well as 
   \begin{align}\label{eq:v bounds}
       0<v(x)\leq Cx^2 \text{ and } |v'(x)|\leq C|x| \quad \text{for all } x\neq 0.
   \end{align}
   Let $\psi\in C^\infty_c(\mathbb{R})$ be a cut-off function with $\psi(x)=\psi(-x)$ for all $x\in \mathbb{R}$ and $\psi \equiv 1$ on $[-\frac{1}{2}, \frac{1}{2}]$ and $\operatorname{supp}\psi\subset(-1,1)$. 
   For $0\leq\rho<\rho_0$, we now replace $v$ by the smooth function $w_\alpha$, given by
    \begin{align}\label{eq:def u graph}
       w_\alpha(x) := \left(1-\psi\left(\frac{x}{\rho}\right)\right)v(x)+ \rho^2 \psi\left(\frac{x}{\rho}\right)u_\alpha(x),
   \end{align}
    for $\alpha\in [0,1]$.
   Clearly, we have $w_\alpha(x)=v(x)$ for $x\in (-\rho_0, -\rho]\cup [\rho, \rho_0)$ whereas $w_\alpha(x) = \rho^2u_\alpha(x)$ for $x\in [-\frac{\rho}{2}, \frac{\rho}{2}]$. Moreover, for all $\alpha\in [0,1]$, we have by \eqref{eq:v bounds} and direct estimates
   \begin{align}\label{eq:C^2 control perturbation}
       \norm{v''-w_\alpha''}{L^2(-\rho_0, \rho_0)}^2 &\leq C\norm{\psi''}{\infty}^2  \rho^{-4}\int_{-\rho}^{\rho}\left( |v(x)|^2+\rho^4|u_\alpha(x)|^2\right)\mathrm{d}x \nonumber\\
       & \quad + C\norm{\psi'}{\infty}^2 \rho^{-2}\int_{-\rho}^{\rho}\left(|v'(x)|^2+ \rho^4 |u_\alpha'|^2 \right) \mathrm{d}x \nonumber\\
       &\quad + C \norm{\psi}{\infty}\int_{-\rho}^{\rho} \left(|v''(x)|^2+\rho^4 |u_\alpha''(x)|^2\right)\mathrm{d}x\nonumber
       \leq C \rho.
   \end{align}
   Similarly, one obtains $\norm{v-w_\alpha}{H^2(-\delta_0, \delta_0)}^2\leq C\rho$.
   Around the self-intersection at $x=\frac{1}{2}$ we proceed similarly by symmetry. This way, we have constructed a smooth curve $\Tilde{\gamma} = \Tilde{\gamma}(\rho, \alpha)\colon\mathbb{T}^1\to\mathbb{R}^2$. Now we get by continuity of the normalized bending energy, if we choose $\rho>0$ small enough, that $\bar{B}[\Tilde{\gamma}]\leq C_{2T}+\varepsilon$ for all $\alpha\in [0,1]$. Fix any such $\rho>0$ and define $\eta_\alpha := \Tilde{\gamma}(\rho, \alpha)$ for $\alpha\in [0,1]$. Then (i), (ii) and (iii) are satisfied. 
   
   Property (iv) follows directly from the construction.
   
   For (v), we note that at $x=0$ we have $\partial_x^k w_\alpha(0) = 0$ for $k=1,2,3$. 
   Hence, by the explicit representation of the elastic flow \eqref{eq:elasticflow} in coordinates (see for instance \cite[(A.4)]{DLPSTE}), we have at $x=0$
   \begin{align}
   \begin{split}
       V[\eta_0](0) &= \left.-2 \nabla_s^2 \kappa - |\kappa|^2\kappa + \lambda\kappa \right\vert_{x=0} \\
       &= \left.-2\left( \frac{\partial_x^4 \eta_0}{|\partial_x \eta_0|^4}\right)^{\perp_{\eta_0}}\right\vert_{x=0} = -48 \rho^2 (0,1),
   \end{split}
   \end{align}
   such that $ V[\eta_0]^{(2)}(0)<0$, where we used that $\partial_x \eta_0(0) = (1, \rho^2\partial_x u_0(0)) = (1,0)$.
   The statement at $x=\frac{1}{2}$ follows similarly.
   
   Property (vi) follows by \eqref{eq:twodrop symmetry} and the symmetry of our construction.
\end{proof}

We will now conclude that the flow of $\eta_\alpha$ develops self-intersections in finite time, if $\alpha>0$ is small enough. 

\begin{proposition}\label{prop:loss of emb n=2} Let $\varepsilon>0$ and let $(\eta_\alpha)_{\alpha\in [0,1]}$ be as in Lemma \ref{lem:existence counterexample n=2}. Then, for $\alpha>0$ small enough, the elastic flow with initial datum $\eta_\alpha$ develops at least two self-intersections in finite time.
\end{proposition}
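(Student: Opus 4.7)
The plan is, following the strategy of \cite{Blatt}, to exploit that at $\alpha=0$ the smooth immersion $\eta_0$ has a tangential self-touching at the origin whose normal velocity pushes the two touching sheets into each other, so that the elastic flow starting from $\eta_0$ immediately develops two transverse self-intersections; by smooth dependence on the initial datum (Theorem \ref{thm:smooth dependence flow}) together with property (iii) of Lemma \ref{lem:existence counterexample n=2}, this behavior will survive perturbation to $\alpha>0$ small.

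By property (iv) the curve $\eta_0(\cdot,0)$ is near parameter $0$ the graph $(x,\rho^2 x^4)$ with horizontal tangent at the touching point. The implicit function theorem applied to $(s,x,t,\alpha)\mapsto \eta_\alpha^{(1)}(s,t)-x$ at the base point, using $\partial_s\eta_0^{(1)}(0,0)=1$, produces a jointly smooth graph representation
\begin{equation*}
 U(x,t,\alpha),\qquad x\in[-\tfrac{\rho}{2},\tfrac{\rho}{2}],\ t\in[0,t_1],\ |\alpha|\le\alpha_1,
\end{equation*}
of the upper sheet, with $U(x,0,\alpha)=\rho^2 u_\alpha(x)$. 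By property (vi) and uniqueness of smooth solutions, the flow preserves the reflection symmetry $\eta_\alpha(x,t)=R\eta_\alpha(\tfrac{1}{2}-x,t)$ for all $t$, so the lower sheet is represented by $-U(x,t,\alpha)$, and consequently self-intersections of $\eta_\alpha(\cdot,t)$ in this graphical window correspond exactly to zeros of $U(\cdot,t,\alpha)$.

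The crucial observation is that, because $\partial_s\eta_0^{(2)}(0,0)=0$ (the profile $\rho^2 s^4$ has zero derivative at $s=0$), the tangential reparametrization correction in the chain rule vanishes and one obtains
\begin{equation*}
 \partial_t U(0,0,0)=V[\eta_0]^{(2)}(0)<0
\end{equation*}
by property (v). Combined with $U(0,0,0)=0$, this yields some $t_0\in(0,t_1)$ with $U(0,t_0,0)<0$. Continuity in $t$ together with the positive initial boundary values $U(\pm\tfrac{\rho}{2},0,0)=\rho^2(\tfrac{\rho}{2})^4>0$ gives $U(\pm\tfrac{\rho}{2},t_0,0)>0$, so by the intermediate value theorem $U(\cdot,t_0,0)$ has two distinct zeros $x_1^*<0<x_2^*$ in $(-\tfrac{\rho}{2},\tfrac{\rho}{2})$, producing two distinct self-intersections of $\eta_0(\cdot,t_0)$.

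Finally, joint continuity of $U$ in $\alpha$ preserves the three sign conditions $U(0,t_0,\alpha)<0$ and $U(\pm\tfrac{\rho}{2},t_0,\alpha)>0$ for all $\alpha\in(0,\alpha_0)$ with $\alpha_0>0$ small enough, and the same intermediate value argument yields at least two self-intersections of $\eta_\alpha(\cdot,t_0)$. The main technical subtlety is to verify that the graph representation window $[0,t_1]\times[-\tfrac{\rho}{2},\tfrac{\rho}{2}]$ can be chosen uniform in $\alpha$ near $0$; this is a standard uniform implicit function theorem argument based on the joint smoothness of $(\alpha,t,s)\mapsto\eta_\alpha(s,t)$ provided by Theorem \ref{thm:smooth dependence flow}.
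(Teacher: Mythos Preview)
Your argument is correct and follows essentially the same route as the paper: exploit the symmetry (vi) together with uniqueness to reduce self-intersections to zeros of the second component, use (v) to get negativity at the center for small $t$, use (iv) for positivity at the edges, conclude two zeros by the intermediate value theorem, and then perturb in $\alpha$ via (iii) and Theorem~\ref{thm:smooth dependence flow}.

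The one difference is that you pass through an implicit-function-theorem graph representation $U(x,t,\alpha)$, whereas the paper works directly with the second component $\Gamma_\alpha^{(2)}(\cdot,t)$ in the original parameter on $\mathbb{T}^1$: it simply observes $\Gamma_0^{(2)}(\pm\tfrac{\rho}{4},t)>0$ and $\Gamma_0^{(2)}(0,t)<0$ for small $t$, perturbs to $\alpha>0$, and then the symmetry $\Gamma_\alpha(x,t)=R\Gamma_\alpha(\tfrac{1}{2}-x,t)$ immediately turns any zero of $\Gamma_\alpha^{(2)}(\cdot,t)$ into a self-intersection. This avoids the uniform IFT step entirely (and the minor bookkeeping issue that your graph window over $[-\tfrac{\rho}{2},\tfrac{\rho}{2}]$ may shrink slightly for $t>0$; the paper sidesteps this by evaluating at parameter $\pm\tfrac{\rho}{4}$). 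Your IFT route is a legitimate alternative, but the paper's direct argument is shorter.
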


\begin{proof}
    Let $\eta_\alpha$ and $\rho>0$ be as in Lemma \ref{lem:existence counterexample n=2} for $\alpha\in[0,1]$ and denote by $\Gamma_\alpha\colon \mathbb{T}^1\times [0,\infty)\to\mathbb{R}^2$ the elastic flow with initial datum $\eta_\alpha$. First, by Lemma \ref{lem:existence counterexample n=2} (iv) and by continuity of the flow $\Gamma_0$, we find for all $t>0$ small enough
    \begin{align}
        \Gamma^{(2)}_0(\pm  \frac{\rho}{4},t) >0,
    \end{align}
    and using the flow equation \eqref{eq:elasticflow} and Lemma \ref{lem:existence counterexample n=2} (v), we can also assume
    \begin{align}
        \Gamma_0^{(2)}(0,t)<0.
    \end{align}
    Using 
    Lemma \ref{lem:existence counterexample n=2} (iii) and Theorem \ref{thm:smooth dependence flow}, we find  for $t>0$ and $\alpha>0$ small enough
    \begin{align}\label{eq:3.3}
        \Gamma_\alpha^{(2)}(\pm \frac{\rho}{4},t)>0 \text{ and }\Gamma_\alpha^{(2)}(0,t)<0.
    \end{align}
    It is a straightforward computation that if $R\in O_2(\mathbb{R})$ denotes the reflection over the $e_1$-axis 
     the family of curves $(x,t)\mapsto R \Gamma_\alpha(\frac{1}{2}-x,t)$ is an elastic flow with initial datum $R\eta_\alpha(\frac{1}{2}-\cdot)$. By Lemma \ref{lem:existence counterexample n=2} (vi) and the uniqueness of the elastic flow (see Theorem \ref{thm:smooth dependence flow}), we thus find $\Gamma_\alpha(x,t) = R\Gamma_\alpha(\frac{1}{2}-x,t)$ for all $x\in \mathbb{T}^1$ and $t>0$. However, by \eqref{eq:3.3} and the classical intermediate value theorem, we find the existence of $x_1 \in (-\frac{\rho}{4},0)$ and $x_2\in (0, \frac{\rho}{4})$ such that $\Gamma^{(2)}_\alpha(x_j,t)=0$ for $j=1,2$. For any $j\in \{1,2\}$, the symmetry then yields $\Gamma_\alpha^{(1)}(x_j,t)=\Gamma_\alpha^{(1)}(\frac{1}{2}-x_j,t)$ and $\Gamma_\alpha^{(2)}(x_j,t)=-\Gamma_\alpha^{(2)}(\frac{1}{2}-x_j,t)=0$. Consequently, $\Gamma_\alpha(\cdot,t)$ possesses at least two self-intersections. 
\end{proof}

\subsection{Optimality in \texorpdfstring{$\mathbb{R}^3$}{R^3}}

We now wish to prove the optimality of the energy threshold also for spatial curves. As in the two-dimensional case, this will be a consequence of a continuity argument for a small perturbation of a minimal curve, which in this case is the (planar) figure-eight elastica in $\mathbb{R}^3$.

Let $\gamma_8^*\in C^\infty(\mathbb{T}^1;\mathbb{R}^2)$ be a parametrization of the figure-eight elastica $\gamma_8$ 
(see Definition \ref{def:def_fig_eight}) with self-intersection at $\gamma_8^*(0)=\gamma_8^*(\frac{1}{2})=0$. Identifying $\mathbb{R}^2 = \mathbb{R}^2\times \{0\}\subset \mathbb{R}^3$, we can view $\gamma_8^*$ as a space curve. Let $T_1, T_2\in \mathbb{S}^2\cap \mathbb{R}^2$ denote the tangent vectors at the self-intersections, i.e.\ $T_1:=T_{\gamma_8^*}(0)$, $T_2:=T_{\gamma_8^*}(\frac{1}{2})$, see Figure \ref{fig:perturbation_figure_eight} below. With $e_3 := (0,0,1)\in \mathbb{R}^3$, we have that $(T_1,T_2,e_3)$ is a (non-orthogonal) basis for $\mathbb{R}^3$ by \cite[Lemma 5.6]{LiYau1}. For the rest of this subsection, we will express vectors in $\mathbb{R}^3$ with respect to this coordinate system, i.e.\ $(v^{(1)},v^{(2)},v^{(3)}) = v^{(1)}T_1+v^{(2)}T_2+v^{(3)}e_3$ for $v^{(1)},v^{(2)},v^{(3)}\in \mathbb{R}$.

\begin{lemma}\label{lem:existence counterexample n=3}
    Let $\varepsilon>0$. There exists a family of smooth curves $(\eta_\alpha)_{\alpha\in [0,1]}\subset C^\infty_{imm}(\mathbb{T}^1;\mathbb{R}^3)$ such that
    \begin{enumerate}[label={\upshape(\roman*)}]
        \item $\bar B[\eta_\alpha]\leq C_8+\varepsilon$ for all $\alpha\in [0,1]$;
        \item $\eta_\alpha$ is an embedding for all $\alpha\in (0,1]$;
        \item $\eta_\alpha\to\eta_0$ smoothly as $\alpha\searrow 0$;
        \item there exists $\rho>0$ such that $\eta_\alpha(x) = (x, 0, \rho^2u_\alpha(x))$ and $\eta_\alpha(x+\frac{1}{2}) = (0,x, -\rho^2u_\alpha(x))$ for $x\in [-\frac{\rho}{2}, \frac{\rho}{2}]$. In particular $\eta_0(0) = \eta_0(\frac{1}{2}) = 0$;
        \item we have $V[\eta_0]^{(3)}(0)<0$ and $V[\eta_0]^{(3)}(\frac{1}{2}) >0$.
    \end{enumerate}
\end{lemma}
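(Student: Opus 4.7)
The plan is to adapt the codimension-one construction of Lemma \ref{lem:existence counterexample n=2} to three dimensions, now using the extra direction $e_3$ to pull the two branches of the self-intersection apart (rather than flattening one graph onto another). A pleasant simplification over the planar case is that $\gamma_8^*$ is globally smooth (unlike the elastic two-teardrop), so no loss of regularity needs to be tracked. Since the tangent of $\gamma_8^*$ equals $T_1$ at $x=0$ and $T_2$ at $x=\tfrac12$, after a smooth reparametrization there exist $\rho_0>0$ and $v_1,v_2\in C^\infty((-\rho_0,\rho_0))$ with $v_j(0)=v_j'(0)=0$ such that
\begin{align*}
    \gamma_8^*(x)&=xT_1+v_1(x)T_2, & \gamma_8^*(x+\tfrac12)&=v_2(x)T_1+xT_2,
\end{align*}
for $|x|<\rho_0$. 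Fix a symmetric cut-off $\psi\in C_0^\infty((-1,1))$ with $\psi>0$ on $(-1,1)$ and $\psi\equiv 1$ on $[-\tfrac12,\tfrac12]$, and for $0<\rho<\rho_0$ and $\alpha\in[0,1]$ define
\begin{align*}
    \eta_\alpha(x)&:= xT_1+\bigl(1-\psi(\tfrac{x}{\rho})\bigr)v_1(x)\,T_2+\psi(\tfrac{x}{\rho})\,\rho^2 u_\alpha(x)\,e_3,\\
    \eta_\alpha(x+\tfrac12)&:=\bigl(1-\psi(\tfrac{x}{\rho})\bigr)v_2(x)\,T_1+xT_2-\psi(\tfrac{x}{\rho})\,\rho^2 u_\alpha(x)\,e_3,
\end{align*}
for $|x|\leq\rho_0$, and $\eta_\alpha:=\gamma_8^*$ elsewhere. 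Property (iv) then holds by construction on $[-\tfrac{\rho}{2},\tfrac{\rho}{2}]$ and $[\tfrac12-\tfrac{\rho}{2},\tfrac12+\tfrac{\rho}{2}]$.

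The verification of (i), (iii), (v) mirrors Lemma \ref{lem:existence counterexample n=2}. An $H^2$-estimate analogous to \eqref{eq:C^2 control perturbation}, using $|v_j(x)|\leq Cx^2$ and $|v_j'(x)|\leq C|x|$, yields $\eta_\alpha\to\gamma_8^*$ in $H^2$ uniformly in $\alpha$ as $\rho\searrow 0$; choosing $\rho>0$ small enough then gives (i) by continuity of $\bar{B}$. Smoothness in $\alpha$ and property (iii) are immediate, and $\eta_\alpha$ is an immersion for all small $\rho$ because $\gamma_8^*$ is. For (v), near $x=0$ we have $\eta_0(x)=xT_1+\rho^2 x^4 e_3$, so $\partial_x^k\eta_0(0)=0$ for $k=2,3$ (in particular $\kappa(0)=0$) and $\partial_x^4\eta_0(0)=24\rho^2 e_3$ is already normal to $\partial_x\eta_0(0)=T_1\in\mathbb{R}^2$; the coordinate expression of the flow (cf.\ \cite[(A.4)]{DLPSTE}) reduces to $V[\eta_0]=-2\bigl(\partial_x^4\eta_0/|\partial_x\eta_0|^4\bigr)^{\perp_{\eta_0}}$, giving $V[\eta_0](0)=-48\rho^2 e_3$ and hence $V[\eta_0]^{(3)}(0)<0$. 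The analogous computation at $x=\tfrac12$, where $\partial_x^4\eta_0=-24\rho^2 e_3$, yields $V[\eta_0]^{(3)}(\tfrac12)>0$.

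The hard part will be (ii), embeddedness for $\alpha>0$. I would treat separately the four cases depending on where two coinciding preimages $x_1\neq x_2$ of a hypothetical self-intersection lie. Within a single perturbation region (both near $0$, or both near $\tfrac12$), injectivity is automatic because the $T_1$-coordinate (resp.\ $T_2$-coordinate) of $\eta_\alpha$ equals the parameter. If one preimage is in $(-\rho,\rho)$ and the other in $(\tfrac12-\rho,\tfrac12+\rho)$, then on the first branch $\eta_\alpha^{(3)}(x)=\psi(\tfrac{x}{\rho})\rho^2 u_\alpha(x)>0$ throughout $(-\rho,\rho)$ (since $\psi>0$ and $u_\alpha\geq\alpha>0$), while on the second branch the analogous quantity is strictly negative, ruling out any match. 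If exactly one preimage lies in a perturbation region, then $|\eta_\alpha(x_1)|=O(\rho)$ whereas $|\gamma_8^*(x_2)|$ is bounded below uniformly in $\alpha$ (since $\gamma_8^{*-1}(\{0\})=\{0,\tfrac12\}$ and $\gamma_8^*$ is continuous), so for $\rho$ sufficiently small no coincidence is possible. Finally, if both preimages lie outside both perturbation supports, then $\eta_\alpha=\gamma_8^*$ at both and $\gamma_8^*$ is injective on $\mathbb{T}^1\setminus\{0,\tfrac12\}$. The linear independence of $T_1,T_2,e_3$ from \cite[Lemma 5.6]{LiYau1} underpins the second case and explains geometrically why the transverse direction $e_3$ effectively separates the two branches.
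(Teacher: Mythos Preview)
Your construction is exactly the paper's, and you supply far more detail for (ii) than the paper does (which simply says ``(ii)--(v) can directly be deduced from the construction''). Properties (i), (iii), (iv), (v) are handled correctly.

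There is, however, a genuine gap in your case 3 of the embeddedness argument. You claim that if $x_1$ lies in a perturbation region and $x_2$ lies outside both, then $|\eta_\alpha(x_1)|=O(\rho)$ while $|\gamma_8^*(x_2)|$ is bounded below, so for small $\rho$ they cannot coincide. But the lower bound on $|\gamma_8^*(x_2)|$ also degenerates with $\rho$: the point $x_2$ may lie just outside $(-\rho,\rho)$, say $x_2=\rho+\epsilon$, and then $|\gamma_8^*(x_2)|\approx\rho$ as well. Both sides are of order $\rho$, so no contradiction follows.

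The fix is immediate using an argument you already have. Since $\gamma_8^*$ is planar, $\eta_\alpha^{(3)}(x_2)=0$ whenever $x_2$ lies outside both perturbation supports, whereas your requirement $\psi>0$ on $(-1,1)$ ensures $\eta_\alpha^{(3)}(x_1)>0$ for $x_1\in(-\rho,\rho)$ and $\eta_\alpha^{(3)}(x_1)<0$ for $x_1\in(\tfrac12-\rho,\tfrac12+\rho)$. So the third-component comparison you use in case 2 disposes of case 3 as well, with no smallness of $\rho$ needed. Alternatively, you can observe that your graph argument in case 1 actually works on the full graph neighbourhoods $(-\rho_0,\rho_0)$ and $(\tfrac12-\rho_0,\tfrac12+\rho_0)$, not just the perturbation regions; then in case 3 one may assume $x_2$ lies outside both graph neighbourhoods, where $|\gamma_8^*(x_2)|\geq c(\rho_0)>0$ is bounded below independently of $\rho$, and your $O(\rho)$ comparison goes through.
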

A sketch of our construction can be found in Figure \ref{fig:perturbation_figure_eight} below.
\begin{proof}[Proof of Lemma \ref{lem:existence counterexample n=3}]
    Let $\varepsilon>0$. In a neighborhood of $x=0$, we can assume that $\gamma_8^*$ is given as the graph of a function $v^{(2)}\colon(-\rho_0, \rho_0)\to\mathbb{R}$ over the $T_1$-axis, i.e.\ $\gamma_8^*(x)=(x, v^{(2)}(x), 0)$ for all $x\in (-\rho_0, \rho_0)$. The choice of our coordinate system implies $|v^{(2)}(x)|\leq Cx^2$ and $|(v^{(2)})'(x)|\leq C|x|$ for all $x\in [-\rho_0,\rho_0]$. With $\psi$ as in Lemma \ref{lem:existence counterexample n=2}, we define smooth functions 
    $w_\alpha^{(2)}, w_\alpha^{(3)}\colon(-\rho_0, \rho_0)\to\mathbb{R}^2$ by
    \begin{align}
        w_\alpha^{(2)}(x) &:= \left(1-\psi\left(\frac{x}{\rho}\right)\right)v^{(2)}(x),\\
        w_\alpha^{(3)}(x) &:= \rho^2\psi\left(\frac{x}{\rho}\right) u_\alpha(x),
    \end{align}
    where $u_\alpha$ is as in \eqref{eq:def u alpha}. Hence, the function
    \begin{align}
        \begin{cases}
          \gamma_8^*(x) & x\in (-\rho_0, -{\rho}]\cup[{\rho}, \rho_0),\\
          (x, w_\alpha^{(2)}(x), w_\alpha^{(3)}(x)) & x\in (-\rho, \rho),
        \end{cases}
    \end{align}
    is smooth. Around $x=\frac{1}{2}$, we can perform a similar perturbation, writing $\gamma_8^*$ locally as a graph over the $T_2$-axis and using $-u_\alpha$ instead of $u_\alpha$. This yields a closed curve $\eta_\alpha$ for all $\alpha\in[0,1]$. Estimating the $H^2$-norm as in \eqref{eq:C^2 control perturbation} and choosing  $\rho>0$  small enough, we find $\bar{B}[\eta_\alpha] \leq C_8+\varepsilon$ by continuity . 
    
    As in  Lemma \ref{lem:existence counterexample n=2}, the remaining statements (ii)-(v) can directly be deduced from the construction.
\end{proof}

\begin{figure}
    \centering
    \includegraphics[width=0.7\textwidth]{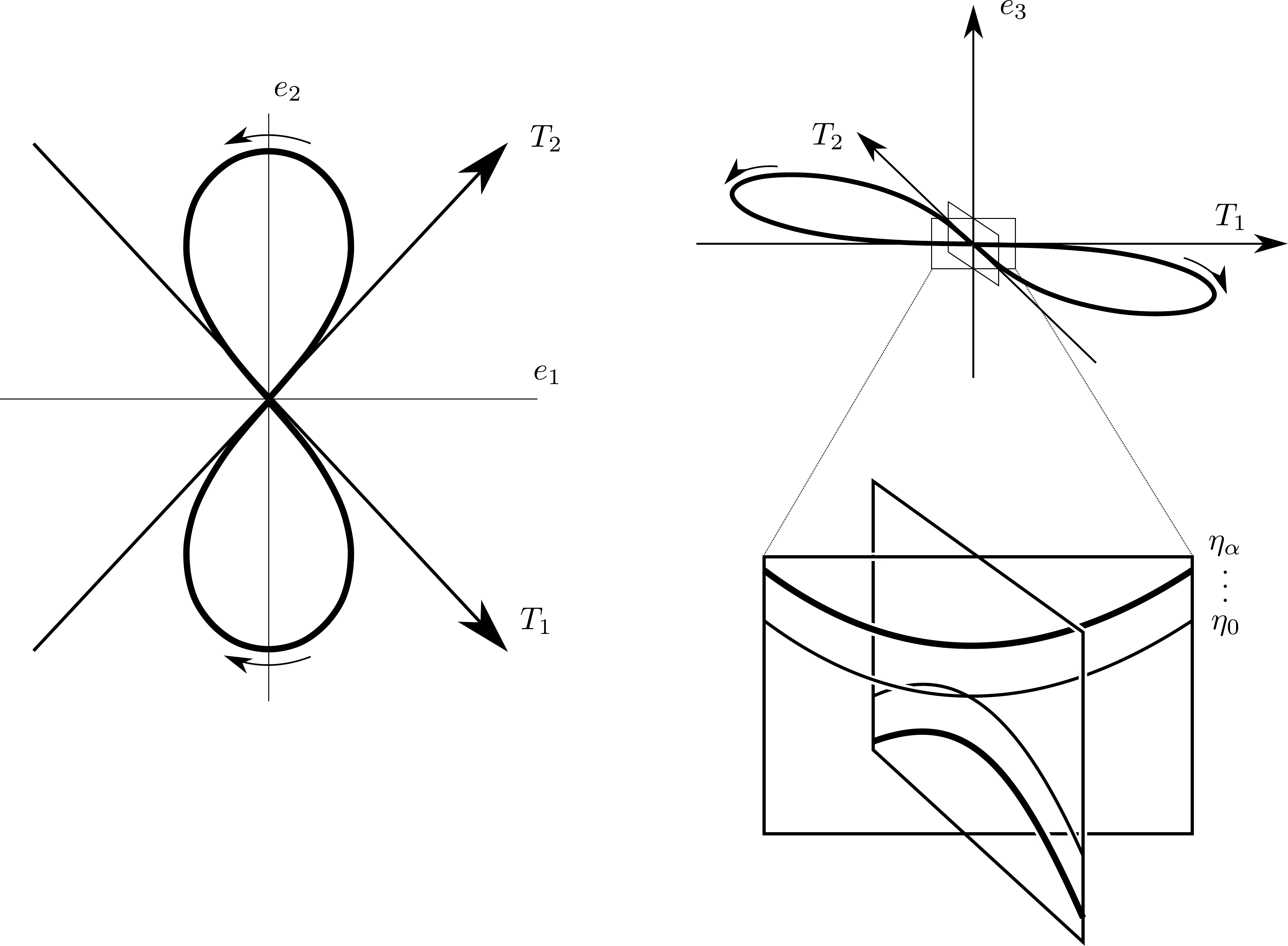}
    \caption{Out-of-plane perturbation of the figure-eight elastica, 
    see Lemma \ref{lem:existence counterexample n=3}.}
    \label{fig:perturbation_figure_eight}
\end{figure}

This is again enough to 
ensure that the curves $\eta_\alpha$ become non-embedded in finite time under the elastic flow.

\begin{proposition}\label{prop:loss of emb n=3}
    Let $\varepsilon>0$ and let $(\eta_\alpha)_{\alpha\in [0,1]}$ be as in Lemma \ref{lem:existence counterexample n=3}. Then for $\alpha>0$ small enough, the elastic flow with initial datum $\eta_\alpha$ develops a self-intersection in finite time.
\end{proposition}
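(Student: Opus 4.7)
The plan is to apply the implicit function theorem to the ``difference map''
\[
\Phi(x,y,t,\alpha) := \Gamma_\alpha(x,t) - \Gamma_\alpha(y,t),
\]
where $\Gamma_\alpha$ denotes the elastic flow with initial datum $\eta_\alpha$. By Theorem \ref{thm:smooth dependence flow} together with the explicit smoothness of $\alpha \mapsto \eta_\alpha$ visible from the construction in Lemma \ref{lem:existence counterexample n=3} and \eqref{eq:def u alpha}, the map $\Phi$ is smooth in all four arguments. A self-intersection of $\Gamma_\alpha(\cdot,t)$ corresponds to a zero of $\Phi(\cdot,\cdot,t,\alpha)$ with $x\neq y$ in $\mathbb{T}^1$, and by Lemma \ref{lem:existence counterexample n=3}(iv) the base point $(x,y,t,\alpha)=(0,1/2,0,0)$ already lies in the zero set.

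The main step is to verify that $D_{(x,y,t)}\Phi$ at this base point is invertible. One computes directly
\[
\partial_x \Phi = T_1,\qquad \partial_y \Phi = -T_2,\qquad \partial_t \Phi = V[\eta_0](0) - V[\eta_0](1/2).
\]
Expressed in the basis $(T_1,T_2,e_3)$ of $\mathbb{R}^3$ (which is indeed a basis by \cite[Lemma 5.6]{LiYau1}), the first two columns are $(1,0,0)^{\mathrm{T}}$ and $(0,-1,0)^{\mathrm{T}}$, while the third column has $e_3$-component $V_0^{(3)}(0) - V_0^{(3)}(1/2)$, which is strictly negative by Lemma \ref{lem:existence counterexample n=3}(v). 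The resulting $3\times3$ matrix therefore has non-zero determinant, so $D_{(x,y,t)}\Phi$ is invertible.

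By the implicit function theorem, there exists a smooth curve $\alpha\mapsto(x(\alpha),y(\alpha),t(\alpha))$, defined for small $\alpha\geq0$, with $(x(0),y(0),t(0)) = (0,1/2,0)$ and $\Phi(x(\alpha),y(\alpha),t(\alpha),\alpha)=0$. To guarantee that $t(\alpha)>0$ for small $\alpha>0$ --- so that the self-intersection genuinely appears along the flow rather than at $t=0$ --- differentiate this identity at $\alpha=0$. Using Lemma \ref{lem:existence counterexample n=3}(iv) one finds $\partial_\alpha \Phi(0,1/2,0,0) = 2\rho^2 e_3$, so that reading off the $e_3$-component of the resulting linear equation yields
\[
\big(V_0^{(3)}(0) - V_0^{(3)}(1/2)\big)\,t'(0) + 2\rho^2 = 0,
\]
hence $t'(0) = 2\rho^2 / (V_0^{(3)}(1/2) - V_0^{(3)}(0)) > 0$. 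Since $x(\alpha)\to 0$ and $y(\alpha)\to 1/2$ are distinct in $\mathbb{T}^1$ for small $\alpha>0$, the identity $\Gamma_\alpha(x(\alpha),t(\alpha)) = \Gamma_\alpha(y(\alpha),t(\alpha))$ produces a genuine self-intersection at positive time.

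The only conceptual obstacle is ensuring that Theorem \ref{thm:smooth dependence flow} applies jointly in $(\alpha,t)$ to produce a smooth map $\Phi$; once this is granted, the proof becomes a clean invocation of the implicit function theorem whose non-degeneracy hypothesis reduces to two structural ingredients: the linear independence of $T_1,T_2,e_3$ (a property of the figure-eight elastica built into the choice of basis) and the sign property $V_0^{(3)}(0)<0<V_0^{(3)}(1/2)$, which was engineered into the construction of $\eta_\alpha$.
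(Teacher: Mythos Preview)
Your argument is correct and genuinely different from the paper's. The paper tracks the self-intersection of the \emph{projected} planar curve $(\Gamma_\alpha^{(1)},\Gamma_\alpha^{(2)})$, which persists under perturbation by transversality (via \cite[Lemma~5.12]{LiYau1}), and then applies the intermediate value theorem to the third-component difference $f(t)=\Gamma_\alpha^{(3)}(x(t),t)-\Gamma_\alpha^{(3)}(\tilde x(t),t)$ after a careful choice of several parameters. Your implicit function theorem approach is more direct: it packages the transversality of $T_1,T_2$ and the sign condition on $V^{(3)}$ into a single non-degeneracy of $D_{(x,y,t)}\Phi$, and replaces the parameter-chasing by the computation $t'(0)>0$. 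Both arguments ultimately rest on the same two structural ingredients.

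One technical point you should make explicit: the implicit function theorem requires $\Phi$ to be defined and smooth on an \emph{open} neighborhood of the base point, but the flow is only given for $t\geq 0$. This is easily handled --- smoothness on $[0,\infty)$ in the sense of Theorem~\ref{thm:smooth dependence flow} allows a smooth extension to negative $t$ (Seeley/Whitney), and since you verify $t'(0)>0$ the solution branch lies in the physical region $t>0$ for $\alpha>0$ small, where the extension agrees with the original $\Phi$. Your closing remark about ``the only conceptual obstacle'' gestures at this, but the half-space issue is the precise point to address rather than joint smoothness in $(\alpha,t)$ per se.
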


\begin{proof}
    Let $(\eta_\alpha)_{\alpha\in [0,1]}$ and $\rho>0$ be as in Lemma \ref{lem:existence counterexample n=3} and denote by $\Gamma_\alpha\colon\mathbb{T}^1\times [0, \infty)\to\mathbb{R}^3$ the elastic flow with initial datum $\eta_\alpha$.

    Using Lemma \ref{lem:existence counterexample n=3} (v) and the smoothness of $\Gamma_0$, for some $c= c(\eta_0)>0$, $\delta=\delta(\eta_0)\in (0,\frac{\rho}{2})$ with $\delta<\frac{1}{4}$ and $\tau=\tau(\eta_0)>0$ we have
    \begin{align}\label{eq:3.5 1}
        \partial_t \Gamma^{(3)}_0(x,t)\leq -c,\quad \partial_t \Gamma^{(3)}_0(x+\frac{1}{2},t)\geq c \quad \forall x\in [-\delta, \delta], t\in [0,\tau].
    \end{align}
   Since the map $\Gamma_0$ is smooth by Theorem \ref{thm:smooth dependence flow}, we find some $M=M(\eta_0, \tau)>0$ such that
    \begin{align}\label{eq:3.5 2}
        \norm{\Gamma_0(t, \cdot)-\eta_0}{C^1}\leq Mt \quad \text{for all }t\in [0,\tau].
    \end{align}
    Considering the planar curve $\zeta_0:=(\eta_0^{(1)}, \eta_0^{(2)})$ and using Lemma \ref{lem:existence counterexample n=3} (iv), we find that $\zeta_0$ possesses a unique non-tangential  self-intersection at $\zeta_0(0)=\zeta_0(\frac{1}{2})=0$.
    Now, by the transversality of the self-intersection, 
   by \cite[Lemma 5.12]{LiYau1}, there exists $\omega_0=\omega_0(\zeta_0)>0$ such that any planar curve $\zeta$ with $\norm{\zeta-\zeta_0}{C^1} < \omega_0$ possesses a unique self-intersection at $\zeta(x)=\zeta(\Tilde{x})$. Moreover, this self-intersection is also non-tangential and the map 
    \begin{align}\label{eq:intersection map C^1}
        \{ \zeta \in C^1(\mathbb{T}^1;\mathbb{R}^2) : \norm{\zeta-\zeta_0}{C^1}<\omega_0\} \to \mathbb{R}^2, \zeta\mapsto (x, \Tilde{x})
    \end{align}
    is $C^1$, in particular Lipschitz continuous, after possibly reducing $\omega_0$. Thus, there exists $a=a(\zeta_0)>0$ such that $x\in [-a\omega, a\omega]$ and $\Tilde{x}\in [\frac{1}{2}-a\omega, \frac{1}{2}+a\omega]$ for all $\zeta$ with $\norm{\zeta-\zeta_0}{C^1}\leq \omega< \omega_0$.
    
    Now, we successively pick parameters
    \begin{enumerate}
       \item $\delta' = \delta'(\eta_0,\tau)       \in (0, \delta)$ small enough such that $\frac{\rho^2\delta'^4}{c} < \frac{\delta'}{2aM}<\tau$ and $\omega:=\frac{\delta'}{a}<\omega_0$;
       \item $\tau'=\tau'(\eta_0, \tau)>0$ such that $\frac{\rho^2\delta'^4}{c} < \tau' < \frac{\delta'}{2aM}= \frac{\omega}{2M}$;
       \item $\alpha_0 = \alpha_0(\eta_0,\delta',\tau)$ sufficiently small such that $\norm{\Gamma_\alpha(\cdot,t)-\Gamma_0(\cdot,t)}{C^1}\leq \frac{\omega}{2}$ for all $t\in [0,\tau], \alpha\in [0, \alpha_0]$, which is possible by Lemma \ref{lem:existence counterexample n=3} and Theorem \ref{thm:smooth dependence flow}.
    \end{enumerate}
    We observe, that by \eqref{eq:3.5 1}, Lemma \ref{lem:existence counterexample n=3} (iv) and the choice of $\tau'$, we have
    \begin{align}
        \Gamma_0^{(3)}(x, \tau')\leq \rho^2 x^4-c\tau'\leq  \rho^2\delta'^4 - c\tau' <0 \quad \text{for all }x\in [-\delta',\delta'].
    \end{align}
    Similarly, one obtains $\Gamma_0^{(3)}(x+\frac{1}{2}, \tau')>0$ for all $x\in [-\delta', \delta']$. Thus, fixing some sufficiently small $\alpha=\alpha(\eta_0, \delta, \delta', \tau, \tau')\in (0, \alpha_0)$, by Lemma \ref{lem:existence counterexample n=3} (iii) and Theorem \ref{thm:smooth dependence flow}, we may also assume
    \begin{align}\label{eq:3.5 2+eps}
        \Gamma_\alpha^{(3)}(x, \tau') <0, \quad \Gamma_\alpha^{(3)}(\frac{1}{2}+x, \tau')>0 \quad \text{for all }x\in [-\delta', \delta'].
    \end{align}
    Moreover, for all $t\in [0, \tau']$ by (iii) and \eqref{eq:3.5 2} we have
    \begin{align}\label{eq:3.5 3}
        \norm{\Gamma_\alpha(\cdot,t)-\eta_0}{C^1} &\leq \norm{\Gamma_\alpha(\cdot,t)-\Gamma_0(\cdot,t)}{C^1}+\norm{\Gamma_0(\cdot,t)-\eta_0}{C^1}\\
        &\leq \frac{\omega}{2}+M\tau' = \omega.
    \end{align}
    Hence, we may apply the above argument for transversal self-intersections, based on \cite[Lemma 5.12]{LiYau1}, to the projected planar curves 
    $$Z_\alpha(\cdot,t) := (\Gamma_\alpha^{(1)}(\cdot,t), \Gamma_\alpha^{(2)}(\cdot,t))$$ 
    and deduce that for all $t\in [0,\tau']$ the curve $Z_\alpha(\cdot,t)$ possesses a unique self intersection at $Z_\alpha(x(t),t))=Z_\alpha(\Tilde{x}(t),t)$, where by the choice of $\delta'=a\omega$, cf.\ (i), we have $x(t)\in [-\delta', \delta']$ and $\Tilde{x}(t)\in [\frac{1}{2}-\delta',\frac{1}{2}+\delta']$ for all $t\in [0, \tau']$.

    Now, using that by Theorem \ref{thm:smooth dependence flow} the flow $Z_\alpha$ is smooth, 
    and the properties of the map in \eqref{eq:intersection map C^1} we deduce that $[0, \tau']\ni t\mapsto x(t)$ and $[0, \tau']\ni t\mapsto\Tilde{x}(t)$ are continuous. 
    Thus, the function $f(t):= \Gamma_\alpha^{(3)}(x(t),t)-\Gamma_\alpha^{(3)}(\Tilde{x}(t),t)$ is continuous. By Lemma \ref{lem:existence counterexample n=3} (iv), we have $x(0)=0$ and $\Tilde{x}(0)=\frac{1}{2}$ and we find $f(0) = 2\rho^2u_\alpha(0)>0$ as $\alpha>0$. On the other hand, we have $x(\tau')\in [-\delta', \delta']$ and hence by \eqref{eq:3.5 2+eps}, we find $\Gamma_\alpha^{(3)}(x(\tau'), \tau')<0$ and similarly $\Gamma_\alpha^{(3)}(\Tilde{x}(\tau'), \tau')>0$, so $f(\tau')<0$. Consequently, there exists $t\in (0, \tau')$ with $f(t)=0$ and hence $\Gamma_\alpha(x(t),t)=\Gamma_\alpha(\Tilde{x}(t),t)$, so $\Gamma_\alpha$ has a self-intersection.
\end{proof}

\subsection{Preservation of embeddedness}

In this section, we will finally prove that below the energy thresholds in Theorem \ref{thm:main}, the respective elastic flows remain embedded.

First, we recall the following consequences of the gradient flow nature of the elastic flows \eqref{eq:elasticflow}, 
see also \cite[Section 4.2]{Length Preserving} for a precise discussion of the length-preserving case. 
\begin{remark}\label{rem:energy decay}
If $\gamma\colon\mathbb{T}^1\times [0,\infty)\to \mathbb{R}^n$ is an elastic flow with initial datum $\gamma_0$, then for all $t\in (0,\infty)$ we have
\begin{enumerate}
    \item if $\lambda>0$ is fixed, then $E_\lambda[\gamma(\cdot,t)] \leq E_\lambda[\gamma_0]$ with equality if and only if $\gamma_0$ is a $\lambda$-elastica (
    in the sense of Definition \ref{def:Eelastica});
    \item if $\lambda$ is given by \eqref{eq:def lambda}, then $\bar{B}[\gamma(\cdot,t)]\leq \bar{B}[\gamma_0]$ with equality if and only if $\gamma_0$ is an elastica.
\end{enumerate}
\end{remark}

\begin{proof}[Proof of Theorem \ref{thm:main}]
    First, we assume that $\gamma_0$ is an elastica (resp. a $\lambda$-elastica). Then by Remark \ref{rem:energy decay}, the flow  $\gamma(\cdot,t)\equiv \gamma_0$ is constant.

    In the case of the length-preserving flow, using \cite[Proposition 4.4]{LiYau2} we find that $\gamma(\cdot,t)\equiv \gamma_0$ is an embedded circle for all $t>0$ and the claim follows. If $\lambda>0$ is fixed, by the simple estimate $ab \leq \frac{1}{4\lambda}(a+\lambda b)^2$ for $a,b\geq 0$ and the assumption we have 
    \begin{align}
        \bar{B}[\gamma(\cdot,t)] \leq \frac{1}{4\lambda}E_\lambda[\gamma(\cdot,t)]^2\leq C^*(n)\quad\text{ for all }t\geq 0.\label{eq:Elambda vs bar B}
    \end{align}
     Now, since $\gamma(\cdot,t)\equiv \gamma_0$ is embedded by assumption, \cite[Proposition 4.4]{LiYau2} yields that $\gamma(\cdot, t)=\gamma_0$ is an embedded circle for all $t\geq 0$, and the statement follows.
    
    Hence, by Remark \ref{rem:energy decay} we may now assume $\bar{B}[\gamma(\cdot,t)]< \bar{B}[\gamma_0]$ (respectively  $E_\lambda[\gamma(\cdot,t)]<E_\lambda[\gamma_0]$) for all $t>0$. For both $\lambda>0$ fixed and $\lambda$ as in \eqref{eq:def lambda}, from \eqref{eq:Elambda vs bar B}, we find $\bar{B}[\gamma(\cdot,t)]<C^*(n)$ for all $t>0$. If $n\geq 3$, the embeddedness then directly follows from \cite[Theorem 1.1]{LiYau2}, cf.\ 
    Theorem \ref{thm:figure-eight}. If $n=2$, we observe that $N[\gamma(\cdot,t)]=N[\gamma_0]=1$ since the rotation number is invariant under regular homotopies. Therefore, since $\bar{B}[\gamma(\cdot,t)]<C^*(2)=C_{2T}$ for all $t>0$, the claim follows from Theorem \ref{thm:1.4}.
    
    For the optimality of the threshold, let $\varepsilon>0$ and let $\eta_\alpha$ be as in Lemma \ref{lem:existence counterexample n=2} for $n=2$  and as in Lemma \ref{lem:existence counterexample n=3} for $n\geq 3$, with the identification $\mathbb{R}^3 \cong \mathbb{R}^3\times\{0\}\subset \mathbb{R}^n$ for $n>3$.   By Propositions \ref{prop:loss of emb n=2} and \ref{prop:loss of emb n=3}, the elastic flows of $\eta_\alpha$ become non-embedded in finite time. For the length-preserving case, we observe that $\bar{B}[\eta_\alpha]\leq C^*(n)+\varepsilon$ by Lemmas \ref{lem:existence counterexample n=2} (i) and \ref{lem:existence counterexample n=3} (i) and the claimed optimality of the energy threshold follows. For the case of the $\lambda$-elastic flow with $\lambda>0$, we define $r:= \sqrt{\frac{B[\eta_\alpha]}{\lambda L[\eta_\alpha]}}>0$. Then, also the $\lambda$-elastic flow of $r\eta_\alpha$ becomes non-embedded in finite time. For the energy of $r\eta_\alpha$, we observe that
    \begin{align*}
        \frac{1}{4\lambda} E_\lambda[r\eta_\alpha]^2 &= \frac{B[r\eta_\alpha]^2 + 2\lambda \bar{B}[r\eta_\alpha]+ \lambda^2 L[r\eta_\alpha]^2}{4\lambda} \\
        &=  \frac{r^{-2}B[\eta_\alpha]^2 + 2\lambda \bar{B}[\eta_\alpha]+ \lambda^2 r^2 L[\eta_\alpha]^2}{4\lambda} = \bar{B}[\eta_\alpha]\leq C^*(n)+\varepsilon,
    \end{align*}
    using the scaling behavior of the energies and Lemmas \ref{lem:existence counterexample n=2} (i) and \ref{lem:existence counterexample n=3} (i). Thus, also in this case the optimality property is proven. 
\end{proof}

\appendix

\section{Jacobi Elliptic functions}\label{app:elliptic}
We provide some elementary properties of Jacobi elliptic functions, which can be found for example in \cite[Chapter 16]{Abramowitz}.
\begin{definition}[Amplitude Function, Complete Elliptic Integrals]
Fix $m \in [0,1) $. We define the \emph{Jacobi-amplitude function} $ \am(\,\cdot\,,m) \colon \mathbb{R} \rightarrow \mathbb{R} $ with \emph{modulus} $m$
 to be the inverse function of 
\begin{equation}
\mathbb{R} \ni z \mapsto \int_0^z \frac{1}{\sqrt{1- m\sin^2(\theta)}} \; \mathrm{d}\theta \in \mathbb{R}
\end{equation}
We define the \emph{complete elliptic integral of first} and \emph{second kind} as
\begin{align}
K(m) &:= \int_0^\frac{\pi}{2} \frac{1}{\sqrt{1- m \sin^2(\theta)}} \; \mathrm{d}\theta,& E(m) &:= \int_0^\frac{\pi}{2} \sqrt{1- m \sin^2(\theta)} \; \mathrm{d}\theta
\end{align}
and the \emph{incomplete elliptic integral of first} and \emph{second kind} as
\begin{align}
F(x,m) &:= \int_0^x \frac{1}{\sqrt{1- m \sin^2(\theta)}} \; \mathrm{d}\theta,& E(x, m) &:= \int_0^x \sqrt{1- m \sin^2(\theta)} \mathrm{d}\theta .
\end{align}
Note that $F(\cdot,m) = \am(\cdot,m)^{-1}$. 
\end{definition}

\begin{definition}[Elliptic Functions]\label{def:B2}
For $m\in [0,1)$ the \emph{Jacobi elliptic functions} are given by
\begin{align}
\cn(\cdot,m)\colon \mathbb{R} \rightarrow \mathbb{R}, \;\; &\cn(x,m) := \cos(\am(x,m)), \\ \sn(\cdot,m)\colon \mathbb{R} \rightarrow \mathbb{R}, \;\; &\sn(x,m) := \sin(\am(x,m)), \\ \dn(\cdot,m)\colon \mathbb{R} \rightarrow \mathbb{R}, \;\; &\dn(x,m) := \sqrt{1-m\sin^2(\am(x,m))}.
\end{align}
\end{definition}
The following proposition summarizes all relevant properties and identities for the elliptic functions. They can all be found in \cite[Chapter 16]{Abramowitz}.

\begin{proposition}\label{prop:identities}
\leavevmode

\begin{enumerate}[label={\upshape(\roman*)}]
\item (Derivatives and Integrals of Jacobi Elliptic Functions)  
For each $x \in \mathbb{R}$ and $m \in (0,1)$ we have
\begin{align}
\frac{\partial}{\partial x} \cn(x,m) & = - \sn(x,m) \dn(x,m), &
\frac{\partial}{\partial x} \sn(x,m) & =  \cn(x,m) \dn(x,m) , \\
\frac{\partial}{\partial x} \dn(x,m) & = - m \cn(x,m) \sn(x,m), &
\frac{\partial}{\partial x} \am(x,m) & = \dn(x,m).
\end{align}
\item (Derivatives of Complete Elliptic Integrals) For $m \in (0,1)$ $E$ is smooth and 
\begin{align}
\frac{\mathrm{d}}{\mathrm{d} m}E(m)  &= \frac{E(m) - K(m)}{2m}, 
& \frac{\mathrm{d}}{\mathrm{d} m}K(m)   = \frac{(m-1) K(m) +E(m) }{2 m(1-m)}.
\end{align}
\item (Trigonometric Identities) For each $m \in [0,1)$ and $x \in \mathbb{R}$ the Jacobi elliptic functions satisfy
\begin{align} 
  \cn^2(x,m) + \sn^2(x,m) & = 1, &  \dn^2(x,m) + m \sn^2(x,m) &= 1 .
  \end{align}
\item (Periodicity) All periods of the elliptic functions are given as follows, where $l \in \mathbb{Z}$ and $x \in \mathbb{R}$:
\begin{align} 
\am(lK(m),m) & = l \frac{\pi}{2}, & \nonumber
\cn(x+ 4 l K(m), m ) &  = \cn(x,m)  ,\\
\sn(x+ 4 l K(m), m ) &  = \sn(x,m) , &
\dn(x+ 2 l K(m), m ) &  = \dn(x,m) ,\label{eq:PeriodDN}\\
F(  \tfrac{l\pi}{2} , m )& = lK(m)  ,  & E(  \tfrac{l\pi}{2} , m )& = lE(m) , \nonumber
\end{align}
\begin{equation}
 \am(x+ 2lK(m),m)  =  l \pi + \am(x,m),
\end{equation}
\begin{equation}
F(x + l \pi, m) = F(x,m) + 2lK(m) , 
\end{equation}
\begin{equation}
E(x + l\pi , m ) = E(x,m) + 2lE(m). 
\end{equation}
\item (Asymptotics of the Complete Elliptic Integrals)
\begin{equation}
\lim_{m \rightarrow 1 } K (m) = \infty, \quad\lim_{m \rightarrow 0 } K(m) = \frac{\pi}{2}, \quad 
\lim_{m \rightarrow 1 } E (m) = 1, \quad \lim_{m \rightarrow 0 } E(m) = \frac{\pi}{2}.
\end{equation}
\end{enumerate} 
\end{proposition}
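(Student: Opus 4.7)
\textbf{Proof proposal for Proposition \ref{prop:identities}.} The plan is to derive everything from the defining relation $\am(\cdot,m) = F(\cdot,m)^{-1}$ together with the integral representations of $F$, $E$, $K$. First I would establish (i). Since $F(\cdot,m)$ is a smooth strictly increasing bijection on $\mathbb{R}$ with derivative $1/\sqrt{1-m\sin^2\theta}$, the inverse function theorem gives $\partial_x\am(x,m) = \sqrt{1-m\sin^2(\am(x,m))} = \dn(x,m)$. The derivatives of $\cn$ and $\sn$ then follow by the chain rule from their definitions, and differentiating $\dn(x,m) = \sqrt{1-m\sn^2(x,m)}$ and simplifying using the already-established formulas for $\partial_x\sn$ produces the $\dn$ derivative. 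Part (iii) is entirely immediate: the first identity is $\cos^2+\sin^2 = 1$ applied at $\am(x,m)$, while the second is the definition of $\dn$ rearranged.

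Next I would handle (ii) by differentiation under the integral sign, which is legal on $(0,1)$ by uniform integrability. For $E$ one obtains $E'(m) = -\frac{1}{2}\int_0^{\pi/2}\sin^2\theta/\sqrt{1-m\sin^2\theta}\,d\theta$; writing $\sin^2\theta = \frac{1}{m}(1-(1-m\sin^2\theta))$ and recognizing the two standard integrals yields $E'(m) = (E(m)-K(m))/(2m)$. For $K'(m)$ one differentiates, then performs an integration by parts on $\int_0^{\pi/2}\sin\theta\cos\theta \cdot \partial_\theta[(1-m\sin^2\theta)^{-1/2}]\,d\theta$ to trade the $\sin^2\theta$ factor for an $E$ contribution; reorganizing gives the stated expression. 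This is routine but the computation with the integration-by-parts step is the part most prone to sign errors, so that is where I would be most careful.

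For (iv) I would first note that $F(\pi/2,m) = K(m)$ and $E(\pi/2,m) = E(m)$ by definition, and more generally the integrand of $F$ (resp.\ $E$) is $\pi$-periodic and symmetric, so $F(x+\pi,m) = F(x,m) + 2K(m)$ and similarly for $E$ by splitting the integral over $[0,x+\pi]$ into $[0,\pi]$ and $[\pi,x+\pi]$ and using $\sin^2(\theta+\pi) = \sin^2\theta$. Inverting this gives $\am(x+2lK(m),m) = l\pi + \am(x,m)$, and hence $\am(lK(m),m) = l\pi/2$. The stated periodicities of $\cn$, $\sn$, $\dn$ then follow by composition with $\cos$, $\sin$, and $\sqrt{1-m\sin^2(\cdot)}$, where one notices that $\dn$ has the shorter period $2K(m)$ because the extra $l\pi$ in the amplitude is absorbed by $\sin^2$.

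Finally, for (v) I would observe that $\lim_{m\to 0}\sqrt{1-m\sin^2\theta} = 1$ uniformly in $\theta\in[0,\pi/2]$, giving $K(0) = E(0) = \pi/2$ by dominated convergence. As $m\to 1$, $E(m)\to\int_0^{\pi/2}|\cos\theta|\,d\theta = 1$ again by dominated convergence, while $K(m)$ diverges because the integrand behaves like $1/\sqrt{1-\sin^2\theta} = 1/|\cos\theta|$, which is non-integrable near $\theta = \pi/2$; a lower bound $K(m) \geq \int_0^{\pi/2}1/\sqrt{1-m+m(\pi/2-\theta)^2}\,d\theta \to \infty$ quantifies this. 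The main potential obstacle in the whole program is the bookkeeping in (ii), since an incorrect integration by parts easily produces the wrong factor of $2$ or a wrong sign, but cross-checking with the identity $(E(m)K(m))' = $ known expressions or against the tabulated formulas in \cite[Chapter 16]{Abramowitz} provides an easy sanity check.
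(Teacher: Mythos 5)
Your derivations are correct, but the paper does not actually prove this proposition at all: it is presented as a catalogue of classical facts and discharged entirely by the citation to \cite[Chapter 16]{Abramowitz} (``They can all be found in \cite[Chapter 16]{Abramowitz}''). So your proposal is a genuinely different route in the sense that it replaces a reference with a self-contained derivation from the definitions given in Appendix \ref{app:elliptic}. Each step checks out: the inverse function theorem applied to $F(\cdot,m)$ gives $\partial_x\am=\dn$ and the rest of (i) by the chain rule; (iii) is definitional; for (ii) the manipulation $\sin^2\theta=\tfrac{1}{m}\bigl(1-(1-m\sin^2\theta)\bigr)$ correctly yields $E'(m)=\tfrac{E(m)-K(m)}{2m}$, and the integration by parts for $K'(m)$ is the standard one (and, as you note, the place to be careful; the target formula $\tfrac{(m-1)K(m)+E(m)}{2m(1-m)}$ is the correct one to land on); the shift identities in (iv) follow from $\pi$-periodicity of $\sin^2$ exactly as you describe, with the halved period of $\dn$ correctly explained by the absorption of $l\pi$ into $\sin^2$; and the asymptotics in (v), including the lower bound forcing $K(m)\to\infty$ via $\cos\theta\le\tfrac{\pi}{2}-\theta$, are sound. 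What the paper's approach buys is brevity and reliance on a canonical reference for material that is genuinely classical; what yours buys is verifiability without consulting the handbook. Either is acceptable here, and nothing in your argument is missing or wrong.
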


\section{Some computational lemmas}\label{app:compu}

\begin{proof}[Proof of Proposition \ref{prop:monotonequantity}]
   One readily computes with 
   notation {\eqref{eq:def alpha},} standard trigonometric identities and the estimate $\sin^4(\theta) \leq \sin^2(\theta)$,
   \begin{align}
       f'(m) & = \frac{1-2m \sin^2(\pi- \arcsin\sqrt{\frac{1}{2m}} )}{\sqrt{1- m \sin^2(\pi- \arcsin\sqrt{\frac{1}{2m}}) }} \frac{\mathrm{d}}{\mathrm{d} m}  \left( \pi - \arcsin \sqrt{\frac{1}{2m}}  \right)   \\ 
       & \;\; + \int_0^{\pi- \arcsin\sqrt{\frac{1}{2m}} } \left( \frac{-2 \sin^2(\theta)}{\sqrt{1-m \sin^2(\theta)}} + \frac{1}{2} \frac{(1-2m \sin^2(\theta)) \sin^2(\theta)}{(1-m \sin^2(\theta))^\frac{3}{2}} \right) \; \mathrm{d}\theta\\ 
       & = \int_0^{\pi- \arcsin\sqrt{\frac{1}{2m}}} \frac{-\frac{3}{2} \sin^2(\theta) + m \sin^4(\theta)}{(1- m \sin^2(\theta))^\frac{3}{2}} \; \mathrm{d}\theta \\
       &\leq  \int_0^{\pi- \arcsin\sqrt{\frac{1}{2m}}} \frac{\left( -  \frac{3}{2} + m \right)\sin^2(\theta)}{(1- m \sin^2(\theta))^\frac{3}{2}} \; \mathrm{d}\theta.
   \end{align}
   This expression is negative as $m < \frac{3}{2}$. 
   Now note that 
   \begin{equation}
       f( \frac{1}{2} ) = \int_0^\frac{\pi}{2} \frac{\cos^2(\theta)}{1- \frac{1}{2}\sin^2(\theta)} \; \mathrm{d}\theta > 0.
   \end{equation}
   Moreover,
   \begin{align}
       f(m_8) & = \int_0^{\pi- \arcsin\sqrt{\frac{1}{2m_8}} } \frac{1-2m_8\sin^2(\theta)}{\sqrt{1-m_8 \sin^2(\theta) }}
       \\ & =\int_0^{\frac{\pi}{2} } \frac{1-2m_8\sin^2(\theta)}{\sqrt{1-m_8 \sin^2(\theta) }} \; \mathrm{d}\theta + \int_{\frac{\pi}{2}}^{\pi- \arcsin\sqrt{\frac{1}{2m_8}} } \frac{1-2m_8\sin^2(\theta)}{\sqrt{1-m_8 \sin^2(\theta) }} \; \mathrm{d}\theta.
   \end{align}
   This is smaller than zero since the first integral equals $2E(m_8)-K(m_8)=0$ and the second integral is negative as $1-2m_8 \sin^2(\theta) < 0$ for all $\theta \in [\frac{\pi}{2}, \pi - \arcsin{\sqrt{\frac{1}{2m_8}}}].$
   Existence and uniqueness of the root $m = m_T$ follows from the intermediate value theorem and strict monotonicity. 
\end{proof}

\begin{proof}[Proof of Proposition \ref{prop:gdecr}]
We first show that $g$ is decreasing.
    To this end we expand $g =g(m)$ in a power series on $(0,1)$ and analyze the coefficients. We define for all $k \in \mathbb{N}$
    \begin{equation}
        A_k := \int_{-\frac{\pi}{4}}^\frac{5\pi}{4} \sin^{2k}(\theta) \; \mathrm{d}\theta.
    \end{equation}
    We compute 
    \begin{align}
        g(m) & = \int_{- \frac{\pi}{4}}^\frac{5\pi}{4} (1- 2 \sin^2(\theta)) \sum_{k = 0}^\infty {{-\frac{1}{2}}\choose{k}} (-1)^k \sin^{2k}(\theta) m^k \; \mathrm{d} \theta
        \\ & = \sum_{k = 0}^\infty \frac{(-1)^k}{k!}  \prod_{l = 0}^{k-1} \left( - \frac{1}{2} - l \right) (A_k - 2 A_{k+1}) m^k  
        \\ & = \sum_{k = 0}^\infty \frac{1}{k!} \prod_{l=0}^{k-1} \left( l + \frac{1}{2} \right) (A_k- 2 A_{k+1} ) m^k.  \label{eq:powerser}
    \end{align}
    Next we have a closer look at $A_k$. To this end observe that for all $k \in \mathbb{N}_0$
    \begin{align}
        A_{k + 1 } & = \int_{- \frac{\pi}{4}}^{\frac{5\pi}{4}} \sin^{2k+1} \theta \sin \theta \; \mathrm{d}\theta \\
        &= \left[ - \sin^{2k+1} \theta \cos \theta \right]_{- \frac{\pi}{4}}^{\frac{5\pi}{4}}+ \int_{-\frac{\pi}{4}}^\frac{5\pi}{4}(2k+1) \sin^{2k} \theta \cos^2 \theta \; \mathrm{d} \theta
        \\ & = - \frac{1}{2^k} + (2k+1) A_k - (2k+1) A_{k+1}.
    \end{align}
    One infers that 
    \begin{align}\label{eq:recursion}
        A_{k+1} = \frac{1}{2k+2} \left( -\frac{1}{2^k}+ (2k+ 1) A_k \right).
    \end{align}
    Using this we find that 
    \begin{equation}\label{eq:236}
        A_k - 2A_{k+1} = \frac{1}{2k+2} \left( \frac{1}{2^{k-1}} - 2 k A_k \right). 
    \end{equation}
    Next we show via induction that $\frac{1}{2^{k-1}} - 2k A_k \leq  0$ for all $k \geq 1$, equivalently $k A_k \geq \frac{1}{2^k}$ for all $k \geq 1$.
    One can compute for $k =1$ that $A_1 = \frac{3\pi-2}{4} > \frac{1}{2}$. Next we assume that $k A_k \geq \frac{1}{2^k}$ for some fixed $k \geq 1$ and compute with  \eqref{eq:recursion} and the induction hypothesis
    \begin{align}
        (k+1) A_{k+1} &= \frac{1}{2} \left( (2k+1) A_k - \frac{1}{2^k} \right) \geq \frac{1}{2} \left( 2k A_k - \frac{1}{2^k} \right) \\
        &\geq \frac{1}{2} \left( \frac{1}{2^{k-1}}- \frac{1}{2^k} \right) = \frac{1}{2^{k+1}}
.    \end{align}
This yields the claim also for $k+1$. By induction the claim follows. Going back to  \eqref{eq:236}  we find that $A_k - 2 A_{k+1} \leq 0$.
Note that in  the special case of $k = 1$ we can actually obtain 
\begin{equation}
    A_1 - 2 A_2 = \frac{1}{4} ( 1 - 2 A_1) = \frac{1}{2} \left( 1- \frac{3\pi}{4} \right)  < 0. 
\end{equation}
    Going back to \eqref{eq:powerser} we obtain  
    \begin{equation}
        g(m) = 1 + \sum_{k = 1}^\infty \beta_k m^k \quad \textrm{for all} \quad  m \in (0,1)
    \end{equation}
    for some real numbers $\beta_1,\beta_2,... \leq 0$ and $\beta_1 < 0$. This yields that 
    \begin{equation}
        g'(m) = \sum_{k = 1}^\infty k\beta_km ^{k-1} < 0 \quad \textrm{for all} \quad  m \in (0,1),
    \end{equation}
    meaning that $g$ is decreasing. 
    Next we show that $\lim_{m \rightarrow 0+} g(m) >0 $ and $\lim_{m \rightarrow 1-} g(m) < 0 $.
    It is easy to compute that 
    \begin{equation}
        \lim_{m \rightarrow 0} g(m) = \int_{- \frac{\pi}{4}}^\frac{5\pi}{4} (1- 2 \sin^2(\theta) ) \; \mathrm{d}\theta = 1. 
    \end{equation}
    For the behavior as $m \rightarrow 1 $ we write
    \begin{align}
        g(m) &=  \int_{-\frac{\pi}{4}}^0  \frac{1- 2 \sin^2(\theta)}{\sqrt{1- m \sin^2(\theta)}} \; \mathrm{d}\theta \\
        &\qquad + \int_{\pi}^\frac{5\pi}{4}  \frac{1- 2 \sin^2(\theta)}{\sqrt{1- m \sin^2(\theta)}} \; \mathrm{d}\theta  + \int_{0}^\pi \frac{1- 2 \sin^2(\theta)}{\sqrt{1-m \sin^2 (\theta)}} \; \mathrm{d}\theta.
    \end{align}
    Now since $1- 2 \sin^2(\theta) \geq 0$ and $ \sin^2(\theta) < \frac{1}{2}$ in the range of the first two integrals we can estimate 
    \begin{align}
        g(m) & \leq \frac{1}{\sqrt{1- \frac{m}{2}}}  \left( \int_{-\frac{\pi}{4}}^0 (1- 2\sin^2 \theta) \; \mathrm{d}\theta + \int_{\pi}^\frac{5\pi}{4}(1- 2\sin^2 \theta) \; \mathrm{d}\theta \right) \\
        &\quad + \frac{2}{m}(2E(m) + (m-2)K(m)) 
       \\ & \leq \frac{\pi}{\sqrt{1-\frac{m}{2}}} + \frac{2}{m} (2E(m) + (m-2) K(m)).
    \end{align}
    Now we can take $m \rightarrow 1$ and infer from Proposition \ref{prop:identities} (v) that
    $$
        \lim_{m \rightarrow 1 } g(m) = - \infty.
    $$
    The existence and uniqueness of the root follows now from the intermediate value theorem.
\end{proof}
\begin{proof}[Proof of Lemma \ref{lem:compulemorbit}]
    Define $h(m) := 2 \frac{E(m)}{K(m)} + (m-2)$. Using the techniques of \cite[Proof of {Lemma B.4}]{LiYau1} we infer that $\frac{\mathrm{d}}{\mathrm{d}m} \frac{E(m)}{K(m)} < - \frac{1}{2}$ for all $m \in (0,1)$ and thus we infer that $h'(m) < 0$. Now note that
    $
        \lim_{m \rightarrow 0+ } h(m) = 2 \frac{\frac{\pi}{2}}{\frac{\pi}{2}} - 2 = 0,
    $
    {by Proposition \ref{prop:identities} (v).}
    This and the negative derivative imply $h(m) < 0 $ for all $m \in (0,1).$ The statement follows since $2 E(m) + (m-2) K(m) = h(m) K(m).$
\end{proof}

\begin{lemma}\label{lem:m_T}
    $m_T > \frac{2}{3}$. 
\end{lemma}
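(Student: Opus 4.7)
By Proposition \ref{prop:monotonequantity}, $f$ is strictly decreasing on $(\tfrac{1}{2},1)$ with unique zero $m_T$. Hence it suffices to show $f(\tfrac{2}{3})>0$, which then forces $m_T>\tfrac{2}{3}$.

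At $m=\tfrac{2}{3}$ one has $\alpha(\tfrac{2}{3})=\arcsin\sqrt{3}/2=\tfrac{\pi}{3}$, so the integration limit in \eqref{eq:fwavelike} is $\pi-\alpha=\tfrac{2\pi}{3}$. The numerator $1-\tfrac{4}{3}\sin^2\theta$ of the integrand is nonnegative exactly on $[0,\tfrac{\pi}{3}]$ and nonpositive on $[\tfrac{\pi}{3},\tfrac{2\pi}{3}]$. So I will split
\begin{equation}
f(\tfrac{2}{3})=\underbrace{\int_0^{\pi/3}\frac{1-\tfrac{4}{3}\sin^2\theta}{\sqrt{1-\tfrac{2}{3}\sin^2\theta}}\,\mathrm{d}\theta}_{=:P}+\underbrace{\int_{\pi/3}^{2\pi/3}\frac{1-\tfrac{4}{3}\sin^2\theta}{\sqrt{1-\tfrac{2}{3}\sin^2\theta}}\,\mathrm{d}\theta}_{=:-N}
\end{equation}
and show $P>N$, where $P,N\geq 0$. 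The key observation is the elementary bound $\sqrt{1-\tfrac{2}{3}\sin^2\theta}\in[\tfrac{1}{\sqrt{3}},1]$ for all $\theta$. Using the upper bound $\leq 1$ gives the lower estimate $P\geq \int_0^{\pi/3}(1-\tfrac{4}{3}\sin^2\theta)\,\mathrm{d}\theta$, and using the lower bound $\geq\tfrac{1}{\sqrt{3}}$ yields $N\leq \sqrt{3}\int_{\pi/3}^{2\pi/3}(\tfrac{4}{3}\sin^2\theta-1)\,\mathrm{d}\theta$.

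Both right-hand side integrals are elementary: direct computation via $\sin^2\theta=\tfrac{1-\cos 2\theta}{2}$ yields
\begin{equation}
P\geq \frac{\pi}{9}+\frac{\sqrt{3}}{6}\qquad\text{and}\qquad N\leq\sqrt{3}\left(-\frac{\pi}{9}+\frac{\sqrt{3}}{3}\right)=1-\frac{\pi\sqrt{3}}{9}.
\end{equation}
Thus $P-N>0$ will be equivalent (after multiplying by $18$) to the inequality $2\pi(1+\sqrt{3})+3\sqrt{3}>18$, which I will verify using the crude estimates $\pi>3.14$ and $\sqrt{3}>1.7$ (the latter from $1.7^2=2.89<3$), giving a left-hand side exceeding $2\cdot 3.14\cdot 2.7+3\cdot 1.7>22>18$.

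No step is really hard here; the only minor care needed is the sign-conscious use of the estimate $\sqrt{1-\tfrac{2}{3}\sin^2\theta}\in[\tfrac{1}{\sqrt{3}},1]$ (one must bound the \emph{denominator} from above on the positive region and from below on the negative region, so that both bounds on the \emph{integrand} point in the direction that helps us). The final numerical comparison is comfortably loose, so no sharper estimates on $\pi$ or $\sqrt{3}$ are required.
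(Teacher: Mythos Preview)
Your proof is correct and takes a genuinely more elementary route than the paper. Both arguments begin identically: since $f$ is strictly decreasing with unique root $m_T$, it suffices to show $f(\tfrac{2}{3})>0$, and both note that $\alpha(\tfrac{2}{3})=\tfrac{\pi}{3}$ so the integral runs over $[0,\tfrac{2\pi}{3}]$. From there the paper applies the Weierstrass substitution $u=\tan(\theta/2)$ to convert the integral into a rational-type expression in $u$, splits at $u=\tfrac{1}{\sqrt{3}}$, and uses separate algebraic bounds on each piece (including an explicit antiderivative on $[0,\tfrac{1}{\sqrt{3}}]$). You instead split directly at $\theta=\tfrac{\pi}{3}$ according to the sign of the numerator and exploit the uniform bound $\sqrt{1-\tfrac{2}{3}\sin^2\theta}\in[\tfrac{1}{\sqrt{3}},1]$ on the denominator, reducing everything to two elementary trigonometric integrals. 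Your approach avoids the substitution and the partial-fraction style antiderivative entirely, at the cost of a slightly less sharp but still comfortably sufficient numerical margin. The computations and the final numerical check are all correct.
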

\begin{proof}
  Let $f$ be as in \eqref{eq:fwavelike}. Since $f$ is decreasing 
  by Proposition \ref{prop:monotonequantity} and $m_T$ is the unique root of $f$ it suffices to prove that $f(\frac{2}{3}) > 0.$ Since ${\alpha(\frac{2}{3})} = \frac{\pi}{3}$ we obtain 
  \begin{equation}
      f(\frac{2}{3}) = \int_0^{\frac{2}{3}\pi} \frac{1-\frac{4}{3}\sin^2(\theta)}{\sqrt{1-\frac{2}{3}\sin^2(\theta)}} \; \mathrm{d}\theta.
  \end{equation}
  Using Weierstrass substitution $u = \tan \frac{\theta}{2}$ we obtain
  \begin{align}
     f( \frac{2}{3} ) & = \int_0^{\sqrt{3}} \frac{1- \frac{16}{3} \frac{u^2}{(1+u^2)^2}}{\sqrt{1- \frac{8}{3}\frac{u^2}{(1+u^2)^2} }} \frac{2}{(1+u^2)} \; \mathrm{d}u  \\
     &= \frac{2}{\sqrt{3}} \int_0^{\sqrt{3}} \frac{3(1+u^2)^2- 16 u^2}{\sqrt{3(1+u^2)^2- 8u^2}} \frac{1}{(1+u^2)^2} \; \mathrm{d}u
      \\ & = \frac{2}{\sqrt{3}} \int_0^{\sqrt{3}} \frac{3+ 3u^4 - 10 u^2}{\sqrt{3+3u^4 - 2u^2}} \frac{1}{(1+u^2)^2} \; \mathrm{d}u. \label{eq:aestf}
  \end{align}
 We split the integral that appears into two parts. For $u \in [\frac{1}{\sqrt{3}}, \sqrt{3}]$ we {can} estimate $3 + 3u^4 -2u^2 = 3(1-u^2)^2  + 4u^2 \geq 4 u^2$. As a consequence, we have 
 \begin{align}
     \frac{3+ 3u^4 - 10 u^2}{\sqrt{3+3u^4 - 2u^2}} &= \sqrt{3+ 3u^4 - 2u^2} - 8 \frac{u^2}{\sqrt{3+3u^4 - 2u^2}} \\
     &\geq 2 u - \frac{8u^2}{(2u)} = -2u, 
 \end{align}
 and thus (with the substitution $z = u^2$)  
 \begin{align}
     \int_{\frac{1}{\sqrt{3}}}^{\sqrt{3}} \frac{3+ 3u^4 - 10 u^2}{\sqrt{3+3u^4 - 2u^2}(1+u^2)^2}  \; \mathrm{d} u &\geq - \int_\frac{1}{\sqrt{3}}^{\sqrt{3}} \frac{2u}{(1+u^2)^2} \; \mathrm{d}u \\
     &= - \int_{\frac{1}{3}}^{3} \frac{1}{(1+z)^2} \; \mathrm{d}z = - \frac{1}{2}. 
 \end{align}
 For $u \in [0,\frac{1}{\sqrt{3}}]$ we estimate $\sqrt{3+3u^4 - 2u^2} \leq \sqrt{3+3u^4 + 6u^2} = \sqrt{3}(1+u^2)$. Moreover, the numerator in the integrand $3 + 3u^4 - 10u^2 = 3 ( 3- u^2) (\frac{1}{3}- u^2)$ is nonnegative in $[0, \frac{1}{\sqrt{3}}]$. Hence we can estimate 
 \begin{equation}
     \int_0^{\frac{1}{\sqrt{3}}} \frac{3+3u^4 - 10u^2}{\sqrt{3+3u^4 - 2u^2}(1+u^2)^2} \; \mathrm{d}u \geq \int_0^\frac{1}{\sqrt{3}} \frac{\sqrt{3}(3-u^2)(\frac{1}{3}-u^2)}{(1+u^2)^3} \; \mathrm{d}u. 
 \end{equation} 
 One readily computes that  $\frac{1}{\sqrt{3}} \left( \arctan(u) - \frac{2u(u^2-1)}{(u^2+1)^2} \right)$ is an antiderivative for the integrand in the previous equation. Evaluating this antiderivative at the limits and using that $\arctan \frac{1}{\sqrt{3}} = \frac{\pi}{6}$ we obtain
 \begin{equation}
     \int_0^{\frac{1}{\sqrt{3}}} \frac{3+3u^4 - 10u^2}{\sqrt{3+3u^4 - 2u^2}(1+u^2)^2} \; \mathrm{d}u \geq  \frac{1}{4} + \frac{\pi}{6\sqrt{3}}. 
 \end{equation}
 Plugging in all the previous findings {into} \eqref{eq:aestf} we obtain 
 \begin{equation}
     f( \frac{2}{3} ) {\geq} \frac{2}{\sqrt{3}} \left( \frac{1}{4}+ \frac{\pi}{6\sqrt{3}} - \frac{1}{2} \right)  \geq \frac{2}{\sqrt{3}} \left( \frac{1}{2\sqrt{3}} - \frac{1}{4} \right) > 0,
 \end{equation}
 where we have used $\pi > 3$ and $\sqrt{3} < 2$ in the last two steps. 
\end{proof}
 
 \section{A detailed proof of optimal global regularity}\label{app:optireg}

 \begin{proof}[Proof of Lemma \ref{lem:optireg}]
    Let $\gamma \in \mathcal{A}_0$ be a solution of \eqref{eq:pertfunc}. By Proposition \ref{prop:selfinters}, $\gamma$ has only one point of self-intersection with multiplicity two, say $p = \gamma(a) = \gamma(b)$. Furthermore, $\gamma$ is smooth away from $a,b$. Recall from Proposition \ref{prop:selfinters} that $T_\gamma(a) = - T_\gamma(b)$. After rotation and translation we may assume that $p = 0$ and $T_\gamma(a) = (1,0)$. By the implicit function theorem we infer that there exists $\delta > 0$ and an open neighborhood $U \subset \mathbb{R}^2$ of $0$ such that  \begin{equation}
      \gamma(\mathbb{T}^1) \cap U = \mathrm{graph}(u_a) \cup \mathrm{graph}(u_b)
  \end{equation}
  for some functions $u_a, u_b \in W^{2,2}((-\delta, \delta))$.
  
We claim that we can choose $u_a, u_b$ in a way that $u_a(x) \leq u_b(x)$ for all $x \in [-\delta,\delta]$ with equality if and only if  $x = 0$ and there exist $V_a,V_b \subset \mathbb{T}^1$ open neighborhoods of $a$ respectively $b$ such that $x \mapsto (x,u_a(x))$ is a reparametrization of $\gamma \vert_{V_a}$ and $x \mapsto (x,u_b(x))$ is a reparametrization of $\gamma\vert_{V_b}$. Indeed, if one chooses arbitrary graph reparametrizations $u_a$ (resp. $u_b$) in $W^{2,2}((-\delta, \delta))$ on suitably small neighborhoods $V_a$ and $V_b$ then $u_a = u_b$ may only happen at $x= 0$ as this is the only point of self-intersection. If $u_b - u_a$ changes sign at $x=0$ then each small perturbation of $\gamma$ in $V_a$ will also have a self-intersection (by the intermediate value theorem). The same will apply to perturbations in $V_b$. Having this we conclude from Lemma \ref{lem:localize} that $\gamma \in \mathcal{A}_0$ is an elastica, a contradiction to Lemma \ref{lem:ruleoutelastica}. Hence $u_b - u_a$ may not change sign.
  
In the sequel we will frequently use the following expressions for our energies in terms of $u_a,u_b.$
  \begin{equation}
      \int_{\gamma\vert_{V_a}} k^2 \; \mathrm{d}s = \int_{-\delta}^{\delta} \frac{u_a''(x)^2}{(1+ u_a'(x)^2)^\frac{5}{2}} \; \mathrm{d}x,\quad  \int_{\gamma\vert_{V_a}} 1 \; \mathrm{d}s = \int_{-\delta}^{\delta} 
      \sqrt{1+ u_a'(x)^2} \; \mathrm{d}x. 
  \end{equation}
  
  Next fix $\phi \in C_0^\infty(-\delta,\delta)$  such that $\phi \geq 0$.  For $t> 0$  let  $\gamma_t$ be a curve that coincides with $\gamma$ outside of $V_a$ and with a suitable reparametrization of $x \mapsto u_a(x) + t \phi(x)$, $(x \in (-\delta,\delta))$ inside $V_a$.
  We claim that the perturbation curve $(t \mapsto \gamma_t)$ lies in $C^1([0,\varepsilon); \mathcal{A}_0)$. Indeed, one readily checks that for $t> 0$ small enough one has $\gamma_t \in H^2_{imm}( \mathbb{T}^1; \mathbb{R}^2)$ and $N[\gamma_t] = 1$. Moreover, we observe that
  $(u_a + t \phi) (0) \geq u_a(0) = u_b(0)$ but
  $(u_a + t \phi)(-\delta) = u_a(-\delta) < u_b(-\delta)$. By the intermediate value theorem there exists $x_t \in (-\delta, 0]$ such that $(u_a+ t\phi)(x_t) = u_b(x_t)$, implying that $\gamma_t$ is not injective. We conclude from \eqref{eq:pertfunc} that 
  \begin{align}
      0& \leq \frac{\mathrm{d}}{\mathrm{d} t}\Big\vert_{t = 0 } B[\gamma_t]L[\gamma_t]  \\ & = L[\gamma] \frac{\mathrm{d}}{\mathrm{d}t}\Big\vert_{t = 0 } \int_{-\delta}^\delta \frac{(u_a''+ t \phi'')^2}{(1+ (u_a'+ t \phi')^2)^\frac{5}{2}} \; \mathrm{d}x \\
      &\qquad + B[\gamma] \frac{\mathrm{d}}{\mathrm{d}t}\Big\vert_{t = 0 } \int_{-\delta}^{\delta} \sqrt{1 + (u_a'+ t \phi')^2} \; \mathrm{d}x
      \\ & = 2 L[\gamma] \int_{-\delta}^{\delta} \frac{u_a''\phi''}{(1+u_a'^2)^\frac{5}{2}} \; \mathrm{d}x - 5 L[\gamma] \int_{-\delta}^{\delta} \frac{u_a''^2 u_a' \phi'}{(1+ u_a'^2)^\frac{7}{2}} \; \mathrm{d}x \\
      &\qquad + B[\gamma] \int_{-\delta}^\delta \frac{u_a' \phi'}{\sqrt{1+ u_a'^2}} \; \mathrm{d}x.
   \end{align}
   Since $\phi \in C_0^\infty(-\delta, \delta), \phi \geq 0,$ was arbitrary, the Riesz--Markow--Kakutani theorem  yields a Radon measure $\mu$ on $(-\delta,\delta)$ such that for all $\phi \in C_0^\infty(-\delta, \delta)$ one has
   \begin{align}
        2 L[\gamma] \int_{-\delta}^{\delta} \frac{u_a''\phi''}{(1+u_a'^2)^\frac{5}{2}} \; \mathrm{d}x - 5 L[\gamma] \int_{-\delta}^{\delta} \frac{u_a''^2 u_a' \phi'}{(1+ u_a'^2)^\frac{7}{2}} \; \mathrm{d}x & \\
        + B[\gamma] \int_{-\delta}^\delta \frac{u_a' \phi'}{\sqrt{1+ u_a'^2}} \; \mathrm{d}x &= \int \phi \; \mathrm{d}\mu.
   \end{align}
   
   We show next that $\mu$ is a multiple of the Dirac measure $\delta_0$ concentrated in zero. To this end, it suffices  to show that for all $\phi \in C_0^\infty((-\delta,\delta) \setminus \{ 0 \})$ one has $\int \phi \; \mathrm{d}\mu =0.$
    Fix $\phi \in C_0^\infty((-\delta, \delta) \setminus \{0 \})$. Since $u_a < u_b$ on $(-\delta,\delta) \setminus \{ 0 \}$ and $\mathrm{supp}(\phi)$ is compact we can find $\varepsilon > 0$ such that {$u_a + \varepsilon ||\phi||_\infty < u_b$} on $\mathrm{supp}(\phi)$. In particular, for all $t \in (-\varepsilon,\varepsilon)$ one has $u_a + t \phi \leq u_b$ on $(-\delta,\delta)$ with equality only at $x= 0$. Now (by possibly shrinking $\varepsilon$) define for $t \in (-\varepsilon, \varepsilon)$  a curve in $\gamma_t \in \mathcal{A}_0$ that coincides with $\gamma$ outside $V_a$ and with a reparametrization of $u_a + t\phi$ inside $V_a$. One readily checks that $t \mapsto \gamma_t$ lies in $C^1((-\varepsilon,\varepsilon); \mathcal{A}_0)$. Equation \eqref{eq:varineqpm} yields 
    \begin{align}
        0 & = \frac{\mathrm{d}}{\mathrm{d} t} \Big\vert_{t = 0 } L[\gamma_t] B[\gamma_t]
        \\ & = 2 L[\gamma] \int_{-\delta}^{\delta} \frac{u_a''\phi''}{(1+u_a'^2)^\frac{5}{2}} \; \mathrm{d}x - 5 L[\gamma] \int_{-\delta}^{\delta} \frac{u_a''^2 u_a' \phi'}{(1+ u_a'^2)^\frac{7}{2}} \; \mathrm{d}x \\
        & \qquad + B[\gamma] \int_{-\delta}^\delta \frac{u_a' \phi'}{\sqrt{1+ u_a'^2}} \; \mathrm{d}x.
    \end{align}
    Since the right hand side coincides with $\int \phi \;\mathrm{d}\mu$ and $\phi \in C_0^\infty((-\delta,\delta) \setminus \{0 \} )$ was arbitrary we obtain $\mathrm{supp}(\mu) \subset \{0\}$. 
    
    Hence $\mu = c \delta_0$ for some $c \geq 0$ and hence for all $\phi \in C_0^\infty(- \delta, \delta)$ one has 
    \begin{align}
         2 L[\gamma] \int_{-\delta}^{\delta} \frac{u_a''\phi''}{(1+u_a'^2)^\frac{5}{2}} \; \mathrm{d}x &=  5 L[\gamma] \int_{-\delta}^{\delta} \frac{u_a''^2 u_a' \phi'}{(1+ u_a'^2)^\frac{7}{2}} \; \mathrm{d}x \\
         &\qquad - B[\gamma] \int_{-\delta}^\delta \frac{u_a' \phi'}{\sqrt{1+ u_a'^2}} \; \mathrm{d}x + c \phi(0).
    \end{align}
    Rewriting $\phi(0)= \int_{-\delta}^{\delta} \chi_{(-\delta,0)} \phi' \; \mathrm{d}x$ we infer that 
    \begin{align}
        &\int_{-\delta}^{\delta} \frac{u_a'' }{(1+ u_a'^2)^\frac{5}{2}} \phi'' \; \mathrm{d}x \\
        =& \int_{-\delta}^{\delta} \left( \frac{5u_a''^2u_a'}{2(1+ u_a'^2)^\frac{7}{2}} - \frac{B[\gamma]}{2L[\gamma]} \frac{u_a'}{\sqrt{1+ u_a'^2}}  + \frac{c}{2L[\gamma]} \chi_{(-\delta,0)} \right) \phi' \; \mathrm{d}x.
    \end{align}
    Note that the expression in parentheses lies in $L^1(-\delta,\delta)$. 
    A standard technique (
   see e.g.\ \cite[Proof of Proposition 3.2]{AnnaObst}) shows now that 
   $$\frac{u_a''}{(1+u_a'^2)^\frac{5}{2}} \in W^{1,1}(-\delta,\delta)$$ 
   and 
    \begin{equation}\label{eq:bootstrap}
        \frac{\mathrm{d}}{\mathrm{d}x} \frac{u_a''}{(1+u_a'^2)^\frac{5}{2}} = \frac{5u_a''^2u_a'}{2(1+ u_a'^2)^\frac{7}{2}} - \frac{B[\gamma]}{2L[\gamma]} \frac{u_a'}{\sqrt{1+ u_a'^2}}  + \frac{c}{2L[\gamma]} \chi_{(-\delta,0)}+D 
    \end{equation}
    for a constant $D \in \mathbb{R}$.
    By the chain rule we infer that
    $$(1+ u_a'^2)^\frac{5}{2} \in {W^{1,1}(-\delta,\delta)}$$ 
    and by the product rule (using the fact that $W^{1,1}(-\delta, \delta) \subset C^0([-\delta,\delta])$) we conclude from \eqref{eq:bootstrap} that 
    $$u_a'' \in W^{1,1}(-\delta,\delta).$$ 
    In particular, also 
    $$u_a'' \in C^0([-\delta,\delta]) \subset L^\infty(-\delta,\delta).$$ Inserting this new information back into \eqref{eq:bootstrap} we obtain 
    $$\frac{\mathrm{d}}{\mathrm{d}x} \frac{u_a''}{(1 + u_a'^2)^\frac{5}{2}} \in L^\infty(-\delta,\delta).$$ Arguing again with the chain rule and the product rule we infer that $u_a'' \in W^{1,\infty}(-\delta,\delta)$, which implies 
    $$u_a \in W^{3,\infty}(-\delta,\delta).$$ 
    Analogously, one shows that $u_b \in W^{3,\infty}(-\delta,\delta)$. 
    The above being shown, one readily checks that the arclength reparametrizations of $\mathrm{graph}(u_a)$ and $\mathrm{graph}(u_b)$ also lie in $W^{3,\infty}$. We can conclude that each constant-speed reparametrization of $\gamma$ lies in $W^{3,\infty}(\mathbb{T}^1;\mathbb{R}^2)$. Indeed, such reparametrization of $\gamma$ is smooth outside of $a,b$  and given by a constant-speed reparametrization of a $W^{3,\infty}$-graph in neighborhoods of $a$ and $b$.  The $W^{3,\infty}$-regularity is shown. Continuity of the curvature follows from the fact that by the previous findings the curvature of the constant-speed parametrization is continuous. Here we used the transformation law for the curvature under reparametrization.
\end{proof}

\end{document}